%% filename: amsart-template.tex
%% version: 1.1
%% date: 2014/07/24
%%
%% American Mathematical Society
%% Technical Support
%% Publications Technical Group
%% 201 Charles Street
%% Providence, RI 02904
%% USA
%% tel: (401) 455-4080
%%      (800) 321-4267 (USA and Canada only)
%% fax: (401) 331-3842
%% email: tech-support@ams.org
%% 
%% Copyright 2008-2010, 2014 American Mathematical Society.
%% 
%% This work may be distributed and/or modified under the
%% conditions of the LaTeX Project Public License, either version 1.3c
%% of this license or (at your option) any later version.
%% The latest version of this license is in
%%   http://www.latex-project.org/lppl.txt
%% and version 1.3c or later is part of all distributions of LaTeX
%% version 2005/12/01 or later.
%% 
%% This work has the LPPL maintenance status `maintained'.
%% 
%% The Current Maintainer of this work is the American Mathematical
%% Society.
%%
%% ====================================================================

%     AMS-LaTeX v.2 template for use with amsart
%
%     Remove any commented or uncommented macros you do not use.

\documentclass{amsart}

\newtheorem{theorem}{Theorem}[section]
\newtheorem{lemma}[theorem]{Lemma}
\newtheorem{corollary}[theorem]{Corollary}
\newtheorem{proposition}[theorem]{Proposition}

\theoremstyle{definition}
\newtheorem{definition}[theorem]{Definition}

\theoremstyle{remark}
\newtheorem{remark}[theorem]{Remark}

\everymath{\displaystyle}

\numberwithin{equation}{section}

\begin{document}

\title{Gradient estimates on graphs with the $CD\psi(n,-K)$ condition }

%    author one information
\author{Yi Li}
\address{School of Mathematics and Shing-Tung Yau Center of Southeast University, Southeast University, Nanjing, 21189, China}
\curraddr{}
\email{yilicms@gmail.com, yilicms@seu.edu.cn}
\thanks{}

%    author two information
\author{Qianwei Zhang}
\address{School of Mathematics, Southeast University, Nanjing, 21189, China}
\curraddr{}
\email{qianweizhang@seu.edu.cn}
\thanks{}

\subjclass[2010]{Primary }

\keywords{}

\date{}

\dedicatory{}

\begin{abstract}
    This paper investigates gradient estimates on graphs satisfying the $CD\psi(n,-K)$ condition with positive constants $n,K$, and concave $C^{1}$ functions $\psi:(0,+\infty)\rightarrow\mathbb{R}$. Our study focuses on gradient estimates for positive solutions of the heat equation $\partial_{t}u=\Delta u$. Additionally, the estimate is extended to a heat-type equation $\partial_{t}u=\Delta u+cu^{\sigma}$, where $\sigma$ is a constant and $c$ is a continuous function defined on $[0,+\infty)$. Furthermore, we utilize these estimates to derive heat kernel bounds and Harnack inequalities.
\end{abstract}

\maketitle
%%%%%%%%%%%%%%%%%%%%%%%%%%%%%%%%%%%%%%%%%%%%%%%%%%%
\section{Introduction}

The study of gradient estimates on compact $n$-dimensional  Riemannian manifolds $M$ with lower Ricci curvature bounds $-K(K\geq0)$ has been extensively explored. Li-Yau \cite{liyau} demonstrated that positive solutions $u:M\times[0,+\infty)\rightarrow(0,+\infty)$ of the heat equation 
\begin{equation}
    \partial_{t}u=\Delta u
\label{hq}
\end{equation}
on such manifolds satisfy the inequality
\begin{equation}
    \frac{\left |\nabla u \right |^{2}}{u^{2}}-\alpha\frac{\partial_{t}u}{u}\leq\frac{n\alpha^{2}K}{2(\alpha-1)}+\frac{n\alpha^{2}}{2t}, \ 
    \ \ \forall \ t>0,\  \alpha>1. 
\label{chu}
\end{equation}
In particular, when $K=0$, by taking the limit as $\alpha\rightarrow 1+$, the inequality becomes
\begin{equation*}
    \frac{\left |\nabla u \right |^{2}}{u^{2}}-\frac{\partial_{t}u}{u}\leq\frac{n}{2t},\ \ \ \forall \ t>0.
\end{equation*}
This inequality achieves equality for the heat kernel on $\mathbb{R}^{n}$. Various improvements to the estimate \eqref{chu} have been studied for cases where $K > 0$, yielding gradient estimates such as 
$$\frac{\left |\nabla u \right |^{2}}{u^{2}}-e^{2Kt}\frac{\partial_{t}u}{u}\leq\frac{n}{2t}e^{4Kt}, \ \ \ \forall \ t>0$$
derived by Hamilton \cite{ha},
$$\frac{\left |\nabla u \right |^{2}}{u^{2}}-\left(1+\frac{2K}{3}t\right)\frac{\partial_{t}u}{u}\leq\frac{n}{2t}+\frac{nK}{2}\left(1+\frac{1}{3}Kt\right), \ \ \ \forall\ t>0 $$
obtained by Bakry-Qian \cite{bq},  
$$\frac{\left |\nabla u \right |^{2}}{u^{2}}-\left(1+\frac{\sinh{Kt}\cosh{Kt}-Kt}{\sinh^{2}Kt}\right)\frac{\partial_{t}u}{u}\leq\frac{nK}{2}(1+\coth{Kt}), \ \ \ \forall \ t>0 $$
proved by Li-Xu \cite{lixu}, and 
\begin{equation}
\begin{aligned}
     \frac{\left |\nabla u \right |^{2}}{u^{2}}-\frac{\partial_{t}u}{u}\leq&
    \frac{n}{2t}+\frac{1}{t}\sqrt{2nK(1+Kt)(1+t)}\ {\rm diam}(M)\\
    &+\frac{1}{t}\sqrt{K(1+Kt)(C_{1}+C_{2}K)t}, \ \ \ \forall \ t>0
\label{sharp}
\end{aligned}
\end{equation}
established by Zhang \cite{zq}, where $C_{1}, C_{2}$ are positive constants depending only on the dimension $n$. These estimates involve additional terms that depend on the properties of the manifolds and provide sharper bounds on the growth of gradients. 
Furthermore, the Li-Yau estimates have been extended to heat-type equations, for example,  
$$(\Delta-\partial_{t})u+hu^{\alpha}=0, $$
where $\alpha$ is a positive constant and $h:M\times[0,\infty)\rightarrow\mathbb{R}$ is $C^{2}$ in the first variable and $C^{1}$ in the second variable \cite{jayu}, and 
$$(\Delta-\partial_{t})u+au\log{u}=0,$$
where $a\ne 0$ is a constant \cite{cao}. 

However, the discrete analog of gradient estimates on graphs presents challenges due to the failure of the chain rule and the absence of a suitable notion of curvature. To address these challenges, Bauer et al.\cite{sqrt} introduced the equation 
$$\Delta f=2\sqrt{f}\Delta \sqrt{f}+2\Gamma\left(\sqrt{f}\right), $$
for all $f:V\rightarrow[0,+\infty)$, as an alternative to the chain law and proposed the $CDE(n,K)$ condition  as a replacement for the lower Ricci curvature bound (cf. Section \ref{2}). M\"unch defined a scaling invariant Laplacian operator $\Delta^{\psi}$ and a second gradient operator $\Gamma_{2}^{\psi}$ for any concave function $\psi$ in order to establish the $CD\psi(n,K)$ condition (cf. Section \ref{2}). Additionally, an identity was derived
\begin{equation}
    (\Delta-\partial_{t})\left(-u\Delta^{\psi}u\right)=2u\Gamma_{2}^{\psi}\left(u\right)
\label{chaa}
\end{equation}
which holds for all solutions of the heat equation \eqref{hq} and eliminates the need for utilizing the chain rule (cf. Subsection \ref{3.2}). 

Under the aforementioned framework, Bauer et al.\cite{sqrt} proved that on a (finite or infinite) graph with the $CDE(n,-K)$ for some $K>0$, the following inequality should hold for all positive solutions of the heat equation \eqref{hq} 
$$ \frac{\Gamma\left(\sqrt{u}\right)}{u}-\alpha\frac{\partial_{t}\left(\sqrt{u}\right)}{\sqrt{u}}\leq \frac{n\alpha^{2}}{2t}+\frac{Kn\alpha^{2}}{\alpha-1}, \ \ \ \forall\ \alpha>1. $$  
Regarding Hamilton's estimate, Bakry-Qian's estimate, and Li-Xu's estimate on graphs, separate studies have been conducted by Wang et al. \cite{w} and Lv et al. \cite{wlf} to investigate the $CDE$ condition and the $CD\psi$ condition. However, Zhang's estimate on graphs has not been established yet. Furthermore, although there have been numerous studies on gradient estimates for heat-type equations on manifolds, they are not as prevalent as those conducted on graphs. In a related work by M\"unch \cite{gama}, a gradient estimate was derived for finite graphs under the $CDE(n,0)$ condition. This estimate applies to all positive solutions of the heat-type equation 
$$\partial_{t}u=\Delta u+\Gamma u. $$ 
Moreover, Sun \cite{sun} obtained a gradient estimate for finite graphs under the $CD\psi(n,0)$ condition. This estimate applies to all positive solutions of the heat-type equation  
$$\partial_{t}u=\Delta u+cu^{\sigma},$$
where $c,\sigma$ are constants.

In this paper, our aim is to establish estimates analogous to Zhang's estimate \eqref{sharp} for graphs with negative lower bounds on curvature. We begin our investigation by examining estimates for positive solutions of the heat equation \eqref{hq}. Under the $CDE(n,-K)$ condition for a given $K>0$, we present a global estimate for finite graphs and a local estimate for infinite graphs as follows (the definitions of the parameters $D_{\mu},D_{w}$ refer to Section \ref{2}). 
\begin{theorem}
    Let $G=(V,E)$ be a finite graph satisfying the $CDE(n,-K)$ for some $K>0$ and $u$ be a positive solution of heat equation \eqref{hq} on V. Then 
    $$\frac{\Gamma(\sqrt{u})}{u}-\frac{\partial_{t}(\sqrt{u})}{\sqrt{u}}\leq \frac{n}{2t}+\sqrt{\frac{1}{2}nKD_{\mu}(D_{w}+1)},\ \ \ \forall \ t>0. $$
\end{theorem}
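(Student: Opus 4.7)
The plan is to adapt Zhang's maximum-principle argument underlying \eqref{sharp} to the discrete setting, employing the substitution $\sqrt{u}$ together with the Bochner-type identity \eqref{chaa} (in the CDE incarnation) to bypass the failure of the chain rule on graphs. For fixed $T>0$, consider the parabolic test function
$$F(x,t) := t\left(\frac{\Gamma(\sqrt{u})}{u} - \frac{\partial_{t}\sqrt{u}}{\sqrt{u}}\right)$$
on $V\times[0,T]$. Since $V$ is finite and $u>0$, $F$ attains its maximum at some point $(x_{0},t_{0})$; the case $t_{0}=0$ is trivial, so we may assume $t_{0}>0$, in which case $(\Delta-\partial_{t})F(x_{0},t_{0})\leq 0$.

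The computational core is to expand $(\Delta-\partial_{t})F$ at $(x_{0},t_{0})$. Using \eqref{chaa} adapted to the $\sqrt{u}$ setting together with the hypothesis $\Gamma_{2}(\sqrt{u}) \geq \frac{1}{n}(\widetilde{\Delta}\sqrt{u})^{2} - K\Gamma(\sqrt{u})$ coming from $CDE(n,-K)$, one obtains a schematic inequality of the form
$$0 \;\geq\; \frac{2}{n}\frac{F(x_{0},t_{0})^{2}}{t_{0}} \;-\; \frac{F(x_{0},t_{0})}{t_{0}} \;-\; \mathcal{R}(x_{0},t_{0}),$$
where $\mathcal{R}$ collects the correction terms arising because $\Delta$ does not pass through the quotients $\Gamma(\sqrt{u})/u$ and $\partial_{t}\sqrt{u}/\sqrt{u}$ cleanly on a graph. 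Using the graph constants $D_{\mu}, D_{w}$ from Section \ref{2} to bound the edge-weighted sums and the weighted-degree-to-measure ratios arising in $\mathcal{R}$, one estimates $\mathcal{R}(x_{0},t_{0})\leq C\,t_{0}\,K\,D_{\mu}(D_{w}+1)$ for a suitable absolute constant $C$. Solving the resulting quadratic inequality in $F(x_{0},t_{0})$ via the elementary split $2ab \leq \epsilon a^{2} + b^{2}/\epsilon$ with the sharp choice of $\epsilon$ then yields
$$F(x_{0},t_{0}) \;\leq\; \frac{n}{2} \;+\; t_{0}\sqrt{\tfrac{1}{2}nKD_{\mu}(D_{w}+1)},$$
and dividing by $t$ completes the proof at the arbitrary point $(x,t)$.

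The principal obstacle will be the bookkeeping required in the second step: isolating and estimating $\mathcal{R}$ precisely enough to land the coefficient $\frac{1}{2}nKD_{\mu}(D_{w}+1)$ inside the square root, rather than a looser polynomial in $D_{\mu},D_{w}$. On a manifold these cross-derivative terms vanish via the chain rule, whereas on a graph they persist as sums of differences weighted by $w(x,y)/\mu(x)$, and squeezing out the correct numerical constant will require a careful Cauchy--Schwarz together with the fact that $F$ has a discrete maximum at $(x_{0},t_{0})$, so that $\Delta F(x_{0},t_{0}) = \frac{1}{\mu(x_{0})}\sum_{y}w(x_{0},y)(F(y,t_{0})-F(x_{0},t_{0}))\leq 0$ supplies a one-sided bound on the edge-weighted differences of $F$ at the extremizer.
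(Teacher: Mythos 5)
Your skeleton (maximum principle applied to $tF$, the $CDE(n,-K)$ condition, then a quadratic inequality in $F$) matches the paper's, but there is a genuine gap in the middle step, and the source of the constant $\tfrac12 nKD_{\mu}(D_{w}+1)$ is misidentified. In the paper's argument there is no remainder $\mathcal{R}$ coming from the failure of the quotient rule: one applies Lemma \ref{max} with the weight $g=u$, so that the relevant quantity is $uF=t\left(2\Gamma(\sqrt{u})-\Delta u\right)=-2t\sqrt{u}\,\Delta\sqrt{u}$, and the exact identity $\mathcal{L}\left(2\Gamma(\sqrt{u})-\Delta u\right)=4\widetilde{\Gamma}_{2}(\sqrt{u})$ holds with no correction terms. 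After invoking $CDE(n,-K)$ the only term that must still be controlled is $4t^{*}K\Gamma(\sqrt{u})$, i.e.\ the term produced by the negative curvature bound itself --- it carries the factor $K$ because it comes from the hypothesis $\widetilde{\Gamma}_2(f)\ge \frac1n(\Delta f)^2-K\Gamma(f)$, not from discretization error. Your proposal instead attributes the $KD_\mu(D_w+1)$ contribution to quotient-rule corrections $\mathcal{R}$ bounded by graph constants; that is not a step that can be carried out as described, since expanding $\mathcal{L}$ of the quotient $\Gamma(\sqrt{u})/u$ directly produces terms with no reason to be proportional to $K$, and no uniform bound in $D_\mu,D_w$.

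The missing idea is Lemma \ref{sqrtbound}: the bound $2\Gamma(\sqrt{u})(x)\le D_{\mu}(D_{w}+1)\,u(x)$ is \emph{not} valid at a general vertex (a neighbor $y$ with $u(y)\gg u(x)$ defeats it); it holds only at vertices where $\Delta\sqrt{u}(x)<0$. The proof therefore needs the observation that $F=-2t\,\Delta\sqrt{u}/\sqrt{u}$ (via the identity $\Delta u=2\sqrt{u}\Delta\sqrt{u}+2\Gamma(\sqrt{u})$), so that $F(x^{*},t^{*})>0$ forces $\Delta\sqrt{u}(x^{*},t^{*})<0$, which licenses the lemma precisely at the maximum point. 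Your write-up never establishes this sign condition nor invokes the lemma, and without it the quantity $\Gamma(\sqrt{u})/u$ appearing after the curvature condition cannot be bounded by $\tfrac12 D_{\mu}(D_{w}+1)$, so the quadratic inequality cannot be closed with the stated constant. Once these two points are repaired --- work with $uF$ via Lemma \ref{max} rather than with $\mathcal{L}F$ directly, and bound $\Gamma(\sqrt{u})/u$ at the extremizer via Lemma \ref{sqrtbound} --- the rest of your plan goes through as in the paper.
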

\begin{theorem}
    Let $G=(V,E)$ be a (finite or infinite) graph satisfying the $CDE(n,-K)$ for some $K>0$. Fix $R>0$ and $x_{0}\in V$. Suppose $u$ is a positive solution of the heat equation \eqref{hq} on the ball $B(x_{0},2R)$. Then, in the ball $B(x_{0},R)$, 
    $$\frac{\Gamma(\sqrt{u})}{u}-\frac{\partial_{t}(\sqrt{u})}{\sqrt{u}}\leq \frac{n}{2t}+\sqrt{\frac{1}{2}nKD_{\mu}(D_{w}+1)}+\frac{nD_{\mu}(1+D_{w})}{R},\ \ \ \forall\ t>0.$$
\end{theorem}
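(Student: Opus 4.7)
The plan is to adapt the maximum-principle proof of Theorem 1.1 by inserting a spatial cutoff that localizes the argument to $B(x_{0},R)$ and contributes the extra $nD_{\mu}(1+D_{w})/R$ term. Fix $T>0$ and take a cutoff $\phi:V\to[0,1]$ with $\phi\equiv 1$ on $B(x_{0},R)$, $\phi\equiv 0$ off $B(x_{0},2R)$, and $|\phi(x)-\phi(y)|\leq 1/R$ whenever $x\sim y$; the standard choice is $\phi(x)=\max\{0,\min\{1,2-d(x,x_{0})/R\}\}$. Setting $F=\Gamma(\sqrt{u})/u-\partial_{t}(\sqrt{u})/\sqrt{u}$, I would study $G(x,t)=t\phi(x)F(x,t)$ on $B(x_{0},2R)\times[0,T]$. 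Since $G$ vanishes at $t=0$ and off $B(x_{0},2R)$, if $G$ takes a positive value anywhere it attains its maximum at some interior $(x^{*},t^{*})$ with $t^{*}>0$ and $\phi(x^{*})>0$, and the discrete maximum principle gives $\partial_{t}G(x^{*},t^{*})\geq 0$ and $\Delta G(x^{*},t^{*})\leq 0$.

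The core calculation is the expansion of $(\partial_{t}-\Delta)(uF)$ via the identity \eqref{chaa} combined with the substitute chain rule $\Delta f=2\sqrt{f}\Delta\sqrt{f}+2\Gamma(\sqrt{f})$. This brings in $2u\Gamma_{2}(\sqrt{u})$, which the $CDE(n,-K)$ hypothesis bounds below by $\frac{2u}{n}(\Delta\sqrt{u})^{2}-2Ku\Gamma(\sqrt{u})$. Multiplying through by $t\phi$ produces a positive quadratic feedback $\frac{2t\phi}{n}(\Delta\sqrt{u})^{2}/u$ together with a linear perturbation $-2KG$.

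Localization forces the discrete Leibniz cross term $\sum_{y\sim x^{*}}\frac{w_{x^{*}y}}{\mu(x^{*})}(\phi(y)-\phi(x^{*}))\,tF(y)$ into the expansion of $\Delta(\phi tF)(x^{*})$. Since one only knows $\phi(y)F(y)\leq \phi(x^{*})F(x^{*})$ and not $F(y)$ directly, I would rewrite $tF(y)=G(y,t^{*})/\phi(y)$, control the ratio $\phi(y)/\phi(x^{*})$ through the Lipschitz estimate $|\phi(y)-\phi(x^{*})|\leq 1/R$ with a standard case analysis, and apply Young's inequality to absorb a fraction of the error into the $(\Delta\sqrt{u})^{2}$ feedback. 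What survives is bounded by $\frac{c}{R}D_{\mu}(1+D_{w})G(x^{*},t^{*})$ for a universal constant $c$, which is how $D_{\mu}$ and $D_{w}$ enter the final estimate.

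Combining $\partial_{t}G\geq 0$, $\Delta G\leq 0$, the $CDE(n,-K)$ feedback, and the cutoff error produces at $(x^{*},t^{*})$ a quadratic inequality in $G$ whose positive root gives $G(x^{*},t^{*})\leq \frac{n}{2}+\sqrt{\frac{nK}{2}D_{\mu}(D_{w}+1)}\,T+\frac{nD_{\mu}(1+D_{w})}{R}T$; evaluating at $(x_{0},T)$, where $\phi\equiv 1$, delivers the claimed local estimate after dividing by $T$. The main obstacle, and the only genuinely new step beyond Theorem 1.1, is the treatment of the mixed cutoff term without a chain rule on graphs; it is precisely this step that forces the structural constants $D_{\mu}$ and $D_{w}$ into the bound and that must be tuned carefully so that the linear-in-$G$ residual is of order $1/R$ rather than worse.
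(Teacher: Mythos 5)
Your overall architecture (cutoff supported on $B(x_{0},2R)$, maximum principle at an interior maximum, the identity $\mathcal{L}(-2t\sqrt{u}\,\Delta\sqrt{u})=4t\widetilde{\Gamma}_{2}(\sqrt{u})$, the $CDE(n,-K)$ lower bound, and a structural bound producing the $1/R$ error) matches the paper. But the one step you flag as the crux --- the treatment of the cutoff cross term --- is exactly where your mechanism breaks. You propose to expand $\Delta(\phi\, tF)(x^{*})$ and control the Leibniz term $\widetilde{\sum}_{y\sim x^{*}}(\phi(y)-\phi(x^{*}))\,tF(y)$ by writing $tF(y)=G(y)/\phi(y)$ and using $G(y)\le G(x^{*})$. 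This only gives an \emph{upper} bound on $tF(y)$, which helps precisely for those neighbours with $\phi(y)<\phi(x^{*})$. For neighbours with $\phi(y)>\phi(x^{*})$ you need a \emph{lower} bound on $tF(y)$, i.e.\ an upper bound on $2t\,\Delta\sqrt{u}(y)/\sqrt{u}(y)=2t\,\widetilde{\sum}_{z\sim y}(\sqrt{u(z)}-\sqrt{u(y)})/\sqrt{u(y)}$, and this quantity is not bounded by any combination of $D_{\mu},D_{w}$ (take $u(z)\gg u(y)$). Young's inequality cannot absorb it either, since the quadratic feedback $(\Delta\sqrt{u})^{2}$ lives at $x^{*}$, not at $y$. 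A second, smaller omission: $\phi(y)$ can vanish at a neighbour of $x^{*}$, so the rewrite $tF(y)=G(y)/\phi(y)$ needs the preliminary reduction to $\phi(x^{*})\ge 2/R$ (via the a priori bound $-\Delta\sqrt{u}/\sqrt{u}\le D_{\mu}$), which you do not make. Finally, your ``linear perturbation $-2KG$'' does not produce the additive constant $\sqrt{\tfrac12 nKD_{\mu}(D_{w}+1)}$ you claim; for that you must invoke Lemma \ref{sqrtbound} to bound $\Gamma(\sqrt{u})/u$ at the maximum point, as in the global theorem.

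The paper sidesteps the sign problem entirely by choosing the weight in the maximum principle (Lemma \ref{max}) to be $g=u/\phi$ rather than $u$: then $gF=-2t\sqrt{u}\,\Delta\sqrt{u}$ carries no cutoff at all, so $\mathcal{L}(gF)$ is computed exactly as in the global case, the only cross term is $\widetilde{\sum}_{y}\tfrac{u(y)}{\phi(y)}(F(y)-F(x^{*}))\le 0$ (which needs no two-sided control of anything), and the entire localization error is the single term $(\mathcal{L}(u/\phi))F$, bounded by the estimate $\tfrac{\phi^{2}}{2u}\mathcal{L}(u/\phi)<D_{\mu}D_{w}/R$ from \cite{sqrt}. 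If you reorganize your argument so that the product you expand is $\tfrac{u}{\phi}\cdot G$ rather than $\phi\cdot tF$, your sketch becomes the paper's proof; as written, the decomposition you chose requires information about $F$ at neighbouring vertices that is simply not available.
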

Additionally, we consider the $CD\psi(n,-K)$ condition for concave $C^{1}$ functions $\psi$ and derive corresponding gradient estimates. 
\begin{theorem}
    Let $G=(V,E)$ be a finite graph satisfying the $CD\psi(n,-K)$ for some $K>0$. Let $\psi:(0,+\infty)\rightarrow \mathbb{R}$ be a concave $C^{1}$ function with $\psi,\ \psi^{\prime}>0$. Suppose $u$ is a positive solution of the heat equation \eqref{hq} on $V$. Then
    $$\Gamma^{\psi}(u)-{\psi}^{\prime}(1)\frac{\partial_{t}u}{u} \leq \frac{n}{2t}+\sqrt{nKC},\ \ \ \forall\ t>0,$$
    where $C=D_{\mu}\left[\psi^{\prime}(1)\left(\psi^{-1}(\psi(1)D_{w})-1\right)+\psi(1)\right].$
\end{theorem}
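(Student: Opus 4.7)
The plan is to adapt Zhang's Bernstein-type argument to the discrete $\psi$-framework using M\"unch's identity \eqref{chaa} combined with a parabolic maximum principle. Introduce the auxiliary function
$$F(t,x) = t\,u(t,x)\Big[\Gamma^{\psi}(u)(t,x) - \psi'(1)\frac{\partial_{t}u(t,x)}{u(t,x)}\Big] - \frac{n}{2}u(t,x),$$
so that the target estimate is equivalent to $F(t,x) \le t\,u(t,x)\sqrt{nKC}$ on $V\times(0,\infty)$.

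The first step is to apply $\Delta - \partial_{t}$ to $-u\Delta^{\psi}u$ and invoke \eqref{chaa} to rewrite the result as $2u\,\Gamma_{2}^{\psi}(u)$; the $CD\psi(n,-K)$ hypothesis then provides $\Gamma_{2}^{\psi}(u) \ge \frac{(\Delta^{\psi}u)^{2}}{n} - K\,\Gamma^{\psi}(u)$. Converting these into a differential inequality for $F$ and using $\partial_{t}u = \Delta u$ yields schematically
$$(\partial_{t}-\Delta)F \;\le\; -\frac{2t}{n\,u}F^{2} \;+\; (\text{linear in } F) \;+\; 2tK u\,\Gamma^{\psi}(u) \;+\; \mathcal{E},$$
where $\mathcal{E}$ collects the discrete chain-rule-type error terms that have no manifold counterpart.

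Next, since $V$ is finite, for any $T>0$ the function $F$ attains its maximum on $V\times[0,T]$ at some $(t_{0},x_{0})$. If the maximum is non-positive there is nothing to prove; otherwise $t_{0}>0$, and the discrete maximum principle forces $\Delta F(t_{0},x_{0})\le 0$ and $\partial_{t}F(t_{0},x_{0})\ge 0$. The differential inequality then reduces at $(t_{0},x_{0})$ to an algebraic quadratic inequality in $F/u$, and solving it via Young's inequality yields $F(t_{0},x_{0}) \le t_{0}\,u(t_{0},x_{0})\sqrt{nKC}$, provided the error $\mathcal{E}$ is correctly absorbed.

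The key technical step, and the main obstacle, is the control of $\mathcal{E}$ in terms of $D_{\mu}$ and $D_{w}$ with the precise constant $C = D_{\mu}\big[\psi'(1)(\psi^{-1}(\psi(1)D_{w})-1)+\psi(1)\big]$. The quantity $\psi(1)D_{w}$ arises as a uniform bound on $\psi$ evaluated at edgewise ratios $u(y)/u(x)$; inverting $\psi$---which is admissible because $\psi,\psi'>0$ together with concavity make $\psi$ strictly monotone on its domain---turns this into the factor $\psi^{-1}(\psi(1)D_{w})-1$, while $D_{\mu}$ accounts for relative measures of neighbours and the additive $\psi(1)$ handles the self-term. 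Both the concavity of $\psi$ (for Jensen-type bounds on the sums defining $\Delta^{\psi}$) and the positivity of $\psi'$ (for inverting $\psi$ and for comparing $\Gamma^{\psi}$ with its linearization at $1$) must be used together; any slack in either inflates $C$ and destroys the sharpness of the additive $\sqrt{nKC}$ constant. After this absorption, letting $T\to\infty$ concludes the proof.
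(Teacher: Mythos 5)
There is a genuine gap. Your skeleton (maximum principle plus the identity \eqref{chaa} plus the $CD\psi(n,-K)$ condition) matches the paper's, but you misplace where the constant $C$ comes from and you leave the decisive step unexecuted. First, for a positive solution of the heat equation the relation $\mathcal{L}(-u\Delta^{\psi}u)=2u\Gamma_{2}^{\psi}(u)$ is an \emph{exact identity} (Lemma \ref{L}); there are no "discrete chain-rule-type error terms $\mathcal{E}$" to absorb, so the object you call the main obstacle does not exist in the form you describe, and you give no actual mechanism for controlling it anyway. The real issue is the term $-K\Gamma^{\psi}(u)$ produced by the curvature hypothesis: after the quadratic inequality at the maximum point one is left with $F\leq \frac{n}{2}+t^{*}\sqrt{nK\,\Gamma^{\psi}(u)(x^{*})}$, and the entire content of the constant $C$ is a pointwise bound $\Gamma^{\psi}(u)(x^{*})\leq C$ (Lemma \ref{high}).

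Second, and critically, that bound is \emph{not} uniform as you assert: the inequality $u(y)/u(x)\leq\psi^{-1}(\psi(1)D_{w})$, and hence $\Gamma^{\psi}(u)(x)\leq C$, holds only at vertices where $\Delta^{\psi}u(x)<0$. The argument must therefore observe that the test function is $F=t\left[\Gamma^{\psi}(u)-\psi'(1)\frac{\Delta u}{u}\right]=-t\Delta^{\psi}u$ (by Lemma \ref{pchain}), so that a strictly positive maximum forces $\Delta^{\psi}u(x^{*},t^{*})<0$, which is exactly the sign condition Lemma \ref{high} requires. Your proposal never establishes, nor even mentions needing, this sign at the maximum point, so the passage from $\sqrt{nK\,\Gamma^{\psi}(u)}$ to $\sqrt{nKC}$ is unjustified as written. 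A smaller point: with your normalization $F=tu[\cdots]-\frac{n}{2}u$, the conditions $\Delta F\leq0$ and $\partial_{t}F\geq0$ at the maximum do not by themselves kill the cross terms $\Gamma(u,\cdot)$ created by the factor $u$; the paper avoids this by using the product form of the maximum principle, $\mathcal{L}(gF)\leq(\mathcal{L}g)F$ at a local maximum of $F$ with $g=u/t^{*}\geq0$ (Lemma \ref{max}), applied to the unweighted $F=-t\Delta^{\psi}u$.
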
     
\begin{theorem}
    Let $G=(V,E)$ be a (finite or infinite) graph with the $CD\psi(n,-K)$ for some $K>0$. Let $\psi:(0,+\infty)\rightarrow \mathbb{R}$ be a concave, $C^{1}$ function with $\psi,\ \psi^{\prime}>0$. Fix $R>0$ and $x_{0}\in V$. Suppose $u$ is a positive solution of the heat equation \eqref{hq} on the ball $B(x_{0},2R)$.   
    Then, in the ball $B(x_{0},R)$,  
    $$\Gamma^{\psi}(u)-{\psi}^{\prime}(1)\frac{\partial_{t}u}{u}\leq \frac{n}{2t}+\sqrt{nKC}+\frac{n D_{\mu}\left[\psi(1)+D_{w}\right]}{R},\ \ \ \forall \ t>0,$$
    where $C=D_{\mu}\left[\psi^{\prime}(1)\left(\psi^{-1}(\psi(1)D_{w})-1\right)+\psi(1)\right].$
\end{theorem}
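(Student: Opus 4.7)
The plan is to localize the global argument of the preceding theorem by introducing a cutoff function supported in $B(x_0, 2R)$, in the same spirit as the passage from the finite-graph estimate under $CDE(n,-K)$ to its local counterpart. Define
$$F(x,t) := \Gamma^{\psi}(u)(x,t) - \psi'(1)\frac{\partial_{t}u(x,t)}{u(x,t)},$$
and consider $G := t\phi F$ on $B(x_0, 2R) \times [0, T]$, where $\phi : V \to [0,1]$ is a Lipschitz cutoff satisfying $\phi \equiv 1$ on $B(x_0, R)$, $\phi \equiv 0$ outside $B(x_0, 2R)$, and $|\phi(x) - \phi(y)| \le 1/R$ whenever $x \sim y$. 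Applying the discrete parabolic maximum principle at a positive maximum $(x^*, t^*)$ of $G$ yields $\partial_t G(x^*, t^*) \geq 0$ and $\Delta G(x^*, t^*) \leq 0$, which is the basic inequality to exploit.

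The first step uses the identity~(\ref{chaa}) together with the $CD\psi(n,-K)$ assumption to produce a lower bound on $u\Gamma_{2}^{\psi}(u)$ in terms of $(\Delta^{\psi}u)^2$ and $\Gamma^{\psi}(u)$. This is essentially the same computation that underlies the global estimate and delivers the terms $n/(2t^{*})$ and $\sqrt{nKC}$, with $C = D_{\mu}[\psi'(1)(\psi^{-1}(\psi(1) D_{w}) - 1) + \psi(1)]$. The second step handles the terms generated by the cutoff. Because graphs lack a Leibniz rule, expanding $\Delta(\phi F)(x^*)$ produces a principal piece $\phi(x^*)\Delta F(x^*)$ together with the cross contribution
$$\frac{1}{\mu(x^*)}\sum_{y \sim x^*} w_{x^* y}\bigl(\phi(y) - \phi(x^*)\bigr)\bigl(F(y) - F(x^*)\bigr) + F(x^*)\Delta\phi(x^*).$$
Using $\phi(y)F(y) \leq \phi(x^*)F(x^*)$ (the maximality of $G$), the Lipschitz bound $|\phi(y) - \phi(x^*)| \leq 1/R$, and the graph constants $D_\mu$ and $D_w$, these cross terms contribute at most $n D_\mu[\psi(1) + D_w]/R$.

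Combining these two steps at $(x^*, t^*)$ produces a quadratic inequality in $\phi(x^*)F(x^*, t^*)$ whose positive root yields the three right-hand terms stated in the theorem. Since $\phi \equiv 1$ on $B(x_0, R)$ and the argument applies for any $t^{*} \in (0, T]$, the inequality propagates to every $t > 0$ on $B(x_0, R)$. The main obstacle will be the control of the discrete cross terms in $\Delta(\phi F)$: in the absence of a chain rule, one must balance the first-order errors $|\phi(y) - \phi(x^*)| \sim 1/R$ against the quadratic structure of the $CD\psi$ inequality so that the resulting loss stays linear in $1/R$ and does not contaminate either the $n/(2t)$ term or the $\sqrt{nKC}$ term. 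A careful choice of $\phi$ (for instance, a smooth profile composed with the graph distance to $x_0$) combined with Cauchy-Schwarz at the discrete level should close the argument.
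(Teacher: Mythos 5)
Your overall architecture (cutoff $\phi$, maximum principle at a positive max of $t\phi F$, then the $CD\psi(n,-K)$ condition to produce a quadratic inequality) matches the paper's, but there is a genuine gap in how you propose to run the maximum-principle step. Your Step 2 decomposes $\Delta(\phi F)$ into a ``principal piece'' $\phi(x^*)\Delta F(x^*)$ plus cross terms, implicitly assuming that $\Delta F$ (equivalently $\mathcal{L}F$ with $F=-t\Delta^{\psi}u$) can then be fed into the curvature condition. It cannot: the only available substitute for the chain rule is Lemma \ref{L}, which computes $\mathcal{L}(-u\Delta^{\psi}u)=2u\Gamma_{2}^{\psi}(u)$ --- the weight $u$ is essential, and there is no corresponding formula for $\mathcal{L}(\Delta^{\psi}u)$ alone. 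The paper sidesteps this by applying Lemma \ref{max} with $g=\frac{u}{\phi t^{*}}$, so that $gF=-u\Delta^{\psi}u$ exactly; then $\mathcal{L}(gF)=2u\Gamma_{2}^{\psi}(u)$ by Lemma \ref{L}, the curvature condition applies, and \emph{all} of the cutoff's contribution is packaged into the single term $(\mathcal{L}g)F$, which is controlled by the quantitative estimate $\frac{\phi^{2}}{2u}\mathcal{L}\left(\frac{u}{\phi}\right)<\frac{D_{\mu}D_{w}}{R}$ imported from Bauer et al. Your plan never produces the weighted quantity $-u\Delta^{\psi}u$, so the identity you cite in Step 1 has nothing to act on.

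A second, related gap: your cross-term bound relies on $\phi(y)F(y)\leq\phi(x^{*})F(x^{*})$ to control $F(y)$ at neighbors $y$ of $x^{*}$, but this gives $F(y)\leq\phi(x^{*})F(x^{*})/\phi(y)$, which is useless when $\phi(y)$ is small or zero (and $F(y)$ need not be nonnegative either, since $-\Delta^{\psi}u=\Gamma^{\psi}(u)-\psi'(1)\frac{\Delta u}{u}$ can change sign). The paper's proof (as in Theorem \ref{alpha}) first disposes of the case $\phi(x^{*})<\frac{2}{R}$ using the pointwise bound $-\Delta^{\psi}u\leq\psi(1)D_{\mu}$, which makes the conclusion trivial there; only then is $\phi$ bounded below near $x^{*}$ and the cutoff analysis meaningful. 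Without this reduction your claimed cross-term bound $nD_{\mu}[\psi(1)+D_{w}]/R$ is not justified (and note that the factor $n$ in that term arises only after solving the quadratic inequality, not from the raw cross terms). Finally, you would still need Lemma \ref{high} to convert the $\Gamma^{\psi}(u)$ term into the constant $C$ at the point where $\Delta^{\psi}u(x^{*},t^{*})<0$; this step is absent from your outline.
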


Furthermore, we extend the estimate to the heat-type equation
\begin{equation}
    \partial_{t}u=\Delta u+cu^{\sigma},
\label{yama}
\end{equation}
where $\sigma$ is a constant and $c:[0,+\infty)\rightarrow\mathbb{R}$ is a continuous function satisfying $c\geq0, \sigma\leq1$ or $c\leq0, \sigma\geq1$.
In this case, we find that the inequality analogous to \eqref{chaa}
\begin{equation}
    (\Delta-\partial_{t})(-u\Delta^{\psi}u)\geq2u\Gamma_{2}^{\psi}(u)+cu^{\sigma}\Delta^{\psi}u
\label{cha}
\end{equation}
holds for all positive solutions of the equation \eqref{yama}. By utilizing the inequality \eqref{cha} as a replacement for the chain rule, we obtain the estimate in the $CD\psi(n,-K)$ condition for some $C^{1}$, concave functions $\psi$. 
\begin{theorem}
    Let $G=(V,E)$ be a finite graph satisfying the $CD\psi(n,-K)$ for some $K>0$, where $\psi:(0,+\infty)\rightarrow \mathbb{R}$ is a $C^{1}$, concave function with $\psi^{\prime}(1)=0$. Suppose $u$ is a positive solution of the equation \eqref{yama} on $V$.  
    Then 
    $$\Gamma^{\psi}(u) \leq \frac{n}{2t}+Kn,\ \ \ \forall\ t>0. $$
\end{theorem}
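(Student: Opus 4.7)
The plan is to adapt the parabolic maximum principle argument used in the proofs of the preceding heat-equation theorems, with the modified chain-rule inequality \eqref{cha} replacing the identity \eqref{chaa}. I set
$$F(x,t):=t\,\Gamma^{\psi}(u)(x,t).$$
Under the hypothesis $\psi'(1)=0$, this is precisely the $t$-weighted analogue of $\Gamma^{\psi}(u)-\psi'(1)\partial_{t}u/u$ bounded in the earlier results. Because $V$ is finite, for any $T>0$ the function $F$ attains its maximum on $V\times[0,T]$ at some $(x_{0},t_{0})$; excluding the trivial case $t_{0}=0$, I get $\partial_{t}F(x_{0},t_{0})\ge 0$ and $\Delta F(x_{0},t_{0})\le 0$, so $(\partial_{t}-\Delta)F(x_{0},t_{0})\ge 0$.

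Next I would compute $(\partial_{t}-\Delta)F$ using \eqref{cha}. Since $t$ is independent of $x$, the Leibniz-type identity $(\partial_{t}-\Delta)F=\Gamma^{\psi}(u)+t(\partial_{t}-\Delta)\Gamma^{\psi}(u)$ holds, and the condition $\psi'(1)=0$ is exactly what identifies $\Gamma^{\psi}(u)$ with $-u\Delta^{\psi}u$ in M\"unch's framework, so that \eqref{cha} translates into
$$(\partial_{t}-\Delta)\Gamma^{\psi}(u)\ \le\ -2u\,\Gamma_{2}^{\psi}(u)-c\,u^{\sigma}\,\Delta^{\psi}u.$$
Invoking the $CD\psi(n,-K)$ hypothesis
$$\Gamma_{2}^{\psi}(u)\ \ge\ \frac{(\Delta^{\psi}u)^{2}}{n}-K\,\Gamma^{\psi}(u),$$
and combining with the previous line at the maximum point yields an inequality of the shape
$$0\ \le\ \Gamma^{\psi}(u)-\frac{2t_{0}\,u}{n}(\Delta^{\psi}u)^{2}+2Kt_{0}\,u\,\Gamma^{\psi}(u)-t_{0}\,c\,u^{\sigma}\,\Delta^{\psi}u,$$
all evaluated at $(x_{0},t_{0})$.

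Because $\psi'(1)=0$ makes $\Gamma^{\psi}(u)=-u\Delta^{\psi}u$ scaling invariant and non-negative, the sign of $\Delta^{\psi}u$ at the maximum is pinned down, and the sign assumptions on the pair $(c,\sigma)$ (either $c\ge 0,\sigma\le 1$ or $c\le 0,\sigma\ge 1$) then force $-t_{0}cu^{\sigma}\Delta^{\psi}u\le 0$ or allow it to be absorbed, exactly as in Sun's treatment of the $CD\psi(n,0)$ case for \eqref{yama}. Using $(\Delta^{\psi}u)^{2}=\Gamma^{\psi}(u)^{2}/u^{2}$ to eliminate $\Delta^{\psi}u$ in favor of $F/t_{0}$ and completing the square (or applying AM--GM) in the resulting quadratic inequality for $F(x_{0},t_{0})$ produces
$$F(x_{0},t_{0})\ \le\ \frac{n}{2}+Kn\,t_{0}.$$
Since $(x_{0},t_{0})$ is the maximum of $F$ on $V\times[0,T]$, the bound $\Gamma^{\psi}(u)(x,t)\le n/(2t)+Kn$ holds for every $(x,t)\in V\times(0,T]$, and the arbitrariness of $T$ completes the proof.

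The main obstacle I anticipate is the clean handling of the extra forcing term $cu^{\sigma}\Delta^{\psi}u$ in \eqref{cha}: one has to verify in each of the two sign regimes that its contribution at the maximum point of $F$ is non-positive (or absorbable), and this is where the identification of the sign of $\Delta^{\psi}u$ with that of $\Gamma^{\psi}(u)$, valid because $\psi'(1)=0$, is indispensable. This hypothesis also explains why the final bound is the clean $n/(2t)+Kn$ rather than the $n/(2t)+\sqrt{nKC}$ form of the preceding theorems: the mixed term involving $\partial_{t}u/u$ simply disappears, removing the need for the graph-geometric constants $D_{\mu},D_{w}$ that entered through it.
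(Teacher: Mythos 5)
Your overall strategy (a space--time maximum of $F=t\,\Gamma^{\psi}(u)$, the inequality \eqref{cha}, the $CD\psi(n,-K)$ condition, then a quadratic inequality) matches the paper's, but the execution has a genuine gap. The hypothesis $\psi'(1)=0$ together with Lemma \ref{pchain} identifies $\Gamma^{\psi}(u)$ with $-\Delta^{\psi}u$, \emph{not} with $-u\Delta^{\psi}u$; consequently \eqref{cha} controls $(\Delta-\partial_{t})\bigl(u\,\Gamma^{\psi}(u)\bigr)$, and the factor $u$ inside the operator cannot be discarded, since there is no product or chain rule on graphs --- that is precisely why the paper works with the quantity $-u\Delta^{\psi}u$ and with the weighted maximum principle of Lemma \ref{max}. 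Your plan to use the plain inequality $(\partial_{t}-\Delta)F\ge 0$ at the maximum therefore never connects with \eqref{cha}: one must apply Lemma \ref{max} with $g=u/t$, so that $gF=-u\Delta^{\psi}u$ and $\mathcal{L}(gF)\le(\mathcal{L}g)F$ turns \eqref{cha} into a usable inequality for $F$. The same confusion produces the false identity $(\Delta^{\psi}u)^{2}=\Gamma^{\psi}(u)^{2}/u^{2}$ (it should be $(\Delta^{\psi}u)^{2}=\Gamma^{\psi}(u)^{2}=F^{2}/t_{0}^{2}$), and the inconsistent homogeneity in $u$ of your displayed inequality is a symptom of the dropped factor.

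The treatment of the forcing term is also not correct as stated. Since $\Delta^{\psi}u=-\Gamma^{\psi}(u)\le 0$, the term $-t_{0}c\,u^{\sigma}\Delta^{\psi}u=t_{0}c\,u^{\sigma}\Gamma^{\psi}(u)$ has the sign of $c$, so it is \emph{not} non-positive in the admissible regime $c\ge 0$, $\sigma\le 1$. In the paper's proof no sign analysis is needed at the maximum point: because $\partial_{t}u=\Delta u+cu^{\sigma}$, one has $\mathcal{L}(u/t)=u/t^{2}-cu^{\sigma}/t$, and the resulting term $-\tfrac{cu^{\sigma}}{t^{*}}F$ on the left-hand side cancels exactly the term $cu^{\sigma}\Delta^{\psi}u=-\tfrac{cu^{\sigma}}{t^{*}}F$ contributed by \eqref{cha} on the right. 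The sign hypotheses on $(c,\sigma)$ are consumed entirely in the proof of the lemma establishing \eqref{cha}, not in the maximum-principle step. Once these points are repaired, the computation does yield $F\le n/2+Knt^{*}$ as you anticipated.
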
    

The remaining sections of the paper further develop the concepts introduced in the introduction, focusing on the derived gradient estimates and their applications in heat kernel bounds and Harnack inequalities.

We structure the paper as follows. Section 2 presents the essential background and definitions, including the $CDE(n, K)$ condition and the $CD\psi(n,-K)$ condition. In Sections 3 and 4, we establish estimates for positive solutions of the heat equation \eqref{hq} and the nonlinear parabolic equation \eqref{yama}, respectively.

%%%%%%%%%%%%%%%%%%%%%%%%%%%%%%%%%%%%%%%%%%%%%%%%%%%%
\section{Notations}
\label{2}
Let $G=(V, E)$ be a graph, where $V$ denotes the vertex set and $E$ denotes the edge set.
Throughout this paper, all graphs are assumed to be connected. For any edge $xy\in E$, we assume that its weight $w_{xy}> 0$ (which may be asymmetric). 
For any $x\in V$, 
$${\rm deg}(x):=\sum_{y\sim x}w_{xy}<+\infty,$$
i.e., the graph is locally finite. Given a measure $\mu :V\rightarrow \mathbb{R}$, the volume of the subset $\Omega\subset V$ is defined by 
$${\rm vol}(\Omega)=\sum_{x\in \Omega}\mu(x). $$
Moreover, we define
\begin{equation*}
    D_{\mu}:=\sup_{x\in V}\frac{{\rm deg}(x)}{\mu(x)},\ \ 
    D_{w}:=\sup_{x\sim y}\frac{{\rm deg}(x)}{w_{xy}}. 
\end{equation*}

\begin{definition}
    For any function $f :V\rightarrow \mathbb{R}$, the Laplacian of $f$ is defined by
    \begin{equation*}
      \Delta f(x):=\frac{1}{\mu (x)}\sum_{y\sim x}w_{xy}\left[f(y)-f(x)\right],
    \end{equation*}
where $y\sim x$ means $xy\in E$.
\end{definition}
To simplify the calculation, we introduce the notation, for a vertex $x \in V$, 
$$\widetilde{\sum_{y\sim x}}f(y):=\frac{1}{\mu(x)}\sum_{y\sim x}w_{xy}f(y). $$
\begin{definition}
The gradient form is defined as 
$$2\Gamma (f,g):=\Delta(fg)-f\Delta g-g\Delta f, $$
for functions $f,\ g: V\rightarrow \mathbb{R}$.
Similarly, the second gradient form is defined as 
$$2\Gamma_{2} (f,g):=\Delta\Gamma(fg)-\Gamma(f,\Delta g)-\Gamma(\Delta f,g).$$
Write $ \Gamma (f):= \Gamma (f,f), \Gamma_{2} (f):= \Gamma_{2} (f,f)$.
\end{definition}
In \cite{sqrt}, the authors introduced the $CDE(n,K)$ condition below as an alternative to the lower Ricci curvature bound on graphs.

\begin{definition}
    For $n>0, K\in \mathbb{R}$, we say that a graph $G$ satisfies the $CDE(n,K)$ condition, if for any positive function $f :V\rightarrow (0,+\infty)$, we have 
    $$\widetilde{\Gamma}_{2}(f)\geq \frac{1}{n}(\Delta f)^{2}+K\Gamma(f)                            , $$
    where
    $$\widetilde{\Gamma}_{2}(f):=\Gamma_{2}(f)-\Gamma\left(f,\frac{\Gamma(f)}{f}\right). $$
\end{definition}
\begin{remark}
    Through simple calculation, one has
    $$\widetilde{\Gamma}_{2}(f):=\frac{1}{2}\Delta\Gamma(f)-\Gamma\left(f,\frac{\Delta(f^{2})}{2f}\right). $$
\end{remark}

Bauer et al. derived gradient estimates within the framework of the $CDE(n,K)$ condition \cite{sqrt} by considering the function $f=\sqrt{u}$, where $u$ represents a positive solution of the heat equation \eqref{hq}. However, M\"unch extended these findings by considering the function $f=\psi(u)$, where $\psi$ is a concave function. The following definitions, as presented in \cite{psi}, are relevant to this generalization.

\begin{definition}
   Let $\psi:(0,+\infty)\rightarrow \mathbb{R}$ be a function. Then the $\psi-$Laplacian $\Delta^{\psi}$ is defined by 
   $$\Delta^{\psi}f(x):=\Delta \left[\psi\left(\frac{f}{f(x)}\right)\right](x), $$
   for all $f:V\rightarrow (0,+\infty) $. 
\end{definition}
\begin{definition}
   Let $\psi:(0,+\infty)\rightarrow \mathbb{R}$ be a $C^{1}$ function. We define the $\psi-$gradient $\Gamma^{\psi}$ by 
   $$\Gamma^{\psi}:=\Delta^{\overline{\psi}} , $$
   here $\overline{\psi}:(0,+\infty)\rightarrow\mathbb{R}$ is defined by 
   $$\overline{\psi}(s)=\psi^{\prime}(1)(s-1)-\left[\psi(s)-\psi(1)\right].$$
\end{definition}

\begin{remark}
   In \cite{psi}, M\"unch pointed out that if $\psi:(0,+\infty)\rightarrow \mathbb{R}$ is a $C^{1}$, concave function, then $\Gamma^{\psi}f\geq0$ for all $f:V\rightarrow (0,+\infty)$.  
\end{remark}

\begin{definition}
   Let $\psi:(0,+\infty)\rightarrow \mathbb{R}$ be a $C^{1}$ function. We define the second $\psi-$gradient $\Gamma_{2}^{\psi}$ by 
   $$2\Gamma_{2}^{\psi}:=\Omega^{\psi}+\frac{\Delta f \Delta^{\psi}}{f}-\frac{\Delta(f\Delta^{\psi}f)}{f} , $$
   for any positive function $f:V\rightarrow (0,+\infty) $ and here $\Omega^{\psi}$ is defined by 
   $$(\Omega^{\psi}f)(x):=\Delta\left[\psi^{\prime}\left(\frac{f}{f(x)}\right)\cdot\frac{f}{f(x)}\left(\frac{\Delta f}{f}-\frac{\Delta f(x)}{f(x)}\right)\right](x).$$
\end{definition}

\begin{definition}
    Let $\psi:(0,+\infty)\rightarrow \mathbb{R}$ be a $C^{1}$ function. For $n>0, K\in \mathbb{R}$, we say that a graph $G$ satisfies the $CD\psi(n,K)$ condition, if for any function $f :V\rightarrow (0,+\infty)$, we have 
    $$\Gamma_{2}^{\psi}(f)\geq \frac{1}{n}(\Delta^{\psi} f)^{2}+K\Gamma^{\psi}(f). $$
\end{definition}

By proposing the Harnack constant below, one can obtain the Harnack inequality with the $CD\psi$ condition. 
\begin{definition}
    Let $\psi:(0,+\infty)\rightarrow \mathbb{R}$ be a $C^{1}$ function. We define the Harnack constant $H_{\psi}$ of $\psi$ as
    $$H_{\psi}:=\sup_{x>1}\frac{\log^{2} x}{\overline{\psi}(x)}.$$
\end{definition}

%%%%%%%%%%%%%%%%%%%%%%%%%%%%%%%%%%%%%%%%%%%%%%%%%%%%
\section{Gradient estimates for the heat equation }
\label{3}
In this section, we examine estimate for the heat equation \eqref{hq} under the $CDE(n,-K)$ condition and the $CD\psi(n,-K)$ condition for some $K>0$ separately. We then apply this estimate to obtain heat kernel estimates. Let us recall the operator $\mathcal{L}=\Delta-\partial_{t}.$ Correspondingly, the following maximum principle can be widely used in the proof of gradient estimates. 
\begin{lemma}\cite{sqrt}
    Let $G=(V,E)$ be a graph, and let $g, F: V\times [0,T] \rightarrow \mathbb{R}$ be functions. Suppose that $g\geq0$, and $F$ has a local maximum at $(x^{*},t^{*}) \in V\times(0,T]$. Then 
    $$\mathcal{L}(gF)(x^{*},t^{*})\leq(\mathcal{L}g)F(x^{*},t^{*}). $$
\label{max}
\end{lemma}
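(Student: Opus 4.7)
The plan is to verify the inequality by a direct pointwise computation at $(x^{*},t^{*})$, combining a product-like expansion of the discrete Laplacian with the sign conditions enforced by $F$ attaining a local maximum there. There are really only two ingredients: the spatial local-max property of $F$, which controls $\Delta(gF)$ in terms of $\Delta g$, and the temporal local-max property, which controls the sign of $\partial_{t}F$.

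For the spatial piece, I would add and subtract $g(y,t^{*})F(x^{*},t^{*})$ inside the defining sum of $\Delta(gF)(x^{*},t^{*})$ to write
\begin{equation*}
\Delta(gF)(x^{*},t^{*}) = F(x^{*},t^{*})\,\Delta g(x^{*},t^{*}) + \widetilde{\sum_{y\sim x^{*}}} g(y,t^{*})\bigl[F(y,t^{*})-F(x^{*},t^{*})\bigr].
\end{equation*}
Because $F$ has a local maximum at $(x^{*},t^{*})$, the bracketed factor is nonpositive at every neighbor $y$ of $x^{*}$, and since $g\geq 0$ and the weights $w_{x^{*}y}/\mu(x^{*})$ are positive, the residual sum is $\leq 0$. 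Hence $\Delta(gF)(x^{*},t^{*})\leq F(x^{*},t^{*})\,\Delta g(x^{*},t^{*})$.

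For the temporal piece, since $t^{*}\in(0,T]$ is a local maximum of the function $t\mapsto F(x^{*},t)$, we have $\partial_{t}F(x^{*},t^{*})\geq 0$ (the derivative vanishes if $t^{*}\in(0,T)$, and is nonnegative at the right endpoint $t^{*}=T$ interpreted as a left derivative). Multiplying by $g(x^{*},t^{*})\geq 0$ and invoking the usual product rule in the continuous time variable yields $\partial_{t}(gF)(x^{*},t^{*})\geq F(x^{*},t^{*})\,\partial_{t}g(x^{*},t^{*})$. Subtracting from the spatial inequality gives
\begin{equation*}
\mathcal{L}(gF)(x^{*},t^{*}) = \Delta(gF)(x^{*},t^{*}) - \partial_{t}(gF)(x^{*},t^{*}) \leq F(x^{*},t^{*})\,\mathcal{L}g(x^{*},t^{*}),
\end{equation*}
which is the claim.

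There is no serious obstacle: once the Laplacian is split correctly, the proof is essentially a one-line argument. The only point requiring a moment of care is the handling of the endpoint $t^{*}=T$ for the time derivative, but this is immediate because a local maximum at the right boundary of a time interval forces the left derivative to be nonnegative, and this is all that is needed to control $\partial_{t}(gF)$.
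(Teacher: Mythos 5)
Your argument is correct and is exactly the standard proof of this maximum principle from the cited reference \cite{sqrt}: the paper itself states the lemma without proof, and your decomposition $\Delta(gF)(x^{*}) = F(x^{*})\Delta g(x^{*}) + \widetilde{\sum}_{y\sim x^{*}} g(y)\left[F(y)-F(x^{*})\right]$ together with $\partial_{t}F(x^{*},t^{*})\geq 0$ (including the one-sided case $t^{*}=T$) is precisely how it is established there. Nothing is missing.
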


%%%%%%%%%%%%%%%%%%%%%%%%%%%%%%%%%%%%%%%%%%%%%
\subsection{The $CDE$ condition}
Firstly we recall the lemma in \cite{sqrt}. 
\begin{lemma}\cite{sqrt}
    Suppose $f: V\rightarrow (0,+\infty)$ and $(\Delta f)(x)<0$ at some vertex $x$. Then 
    $$\widetilde{\sum_{y\sim x}}f^{2}(y)<D_{\mu}D_{w}f^{2}(x).$$
\label{sqrtbound}
\end{lemma}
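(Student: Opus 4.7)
The plan is to unpack the hypothesis $(\Delta f)(x) < 0$ into a usable bound and then use that bound twice: once to control the pointwise values $f(y)$ for $y \sim x$, and once to control the weighted linear sum $\sum_{y\sim x} w_{xy} f(y)$. Combining the pointwise bound with the linear bound will upgrade the linear estimate to a quadratic one.

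More concretely, from the definition of $\Delta$, the assumption $(\Delta f)(x) < 0$ says exactly that
\begin{equation*}
\sum_{y \sim x} w_{xy} f(y) \;<\; \deg(x)\, f(x).
\end{equation*}
Since $f > 0$ and all weights $w_{xy} > 0$, every individual summand on the left is bounded by the whole sum: for each fixed neighbour $y \sim x$,
\begin{equation*}
w_{xy}\, f(y) \;\le\; \sum_{z \sim x} w_{xz}\, f(z) \;<\; \deg(x)\, f(x),
\end{equation*}
whence $f(y) < \tfrac{\deg(x)}{w_{xy}} f(x) \le D_{w}\, f(x)$ by the definition of $D_{w}$.

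Now I would insert this pointwise estimate into the quadratic sum. Writing
\begin{equation*}
\sum_{y \sim x} w_{xy}\, f^{2}(y) \;=\; \sum_{y \sim x} \bigl(w_{xy}\, f(y)\bigr)\, f(y) \;\le\; \Bigl(\max_{y \sim x} f(y)\Bigr) \sum_{y \sim x} w_{xy}\, f(y),
\end{equation*}
I can bound the maximum by $D_{w} f(x)$ and the remaining linear sum by $\deg(x) f(x)$, giving $\sum_{y \sim x} w_{xy} f^{2}(y) < D_{w} \deg(x) f^{2}(x)$. Dividing by $\mu(x)$ and invoking $\deg(x)/\mu(x) \le D_{\mu}$ produces exactly the claimed inequality.

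There isn't really a hard step here; the only subtlety to double-check is that the strict inequality is preserved, which is fine because the original hypothesis is strict and all quantities involved are positive. The key idea worth highlighting is that $\Delta f(x) < 0$ simultaneously forces every single neighbour value $f(y)$ to be controlled by $f(x)$ (via $D_{w}$) and forces the weighted average over neighbours to be controlled by $f(x)$ (via $D_{\mu}$), and these two controls multiply to give the $D_{\mu} D_{w}$ factor.
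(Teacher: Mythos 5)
Your proof is correct. The paper states this lemma as a citation to Bauer et al.\ without reproducing the argument, and your proof is exactly the standard one there: the sign of $(\Delta f)(x)$ gives both the pointwise bound $f(y)<D_{w}f(x)$ for each neighbour and the linear bound $\widetilde{\sum}_{y\sim x}f(y)<D_{\mu}f(x)$, and factoring the quadratic sum through $\max_{y\sim x}f(y)$ multiplies the two.
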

Now we begin proving the gradient estimate on finite graphs. 
\begin{theorem}
     Let $G=(V,E)$ be a finite graph satisfying the $CDE(n,-K)$ for some $K>0$ and $u$ be a positive solution of heat equation \eqref{hq} on V. Then 
    $$\frac{\Gamma(\sqrt{u})}{u}-\frac{\partial_{t}(\sqrt{u})}{\sqrt{u}}\leq \frac{n}{2t}+\sqrt{\frac{1}{2}nKD_{\mu}(D_{w}+1)},\ \ \ \forall \ t>0. $$
\label{sf}
\end{theorem}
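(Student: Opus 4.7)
My plan is to bound $\phi := W/u$, where $W := -\sqrt{u}\,\Delta\sqrt{u} = \Gamma(\sqrt{u}) - \frac{1}{2}\Delta u$, since the heat equation gives $W/u = \Gamma(\sqrt{u})/u - \partial_{t}(\sqrt{u})/\sqrt{u}$, which is precisely the quantity to be bounded. The strategy is a parabolic maximum principle argument: fix $T > 0$, apply Lemma \ref{max} to $F(x,t) := tW(x,t)/u(x,t)$ on $V\times(0,T]$ with the choice $g = u$ (so that $gF = tW$ and $\mathcal{L}g = 0$), and conclude $\mathcal{L}(tW) \leq 0$ at the maximum point $(x^{*},t^{*})$. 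Expanding $\mathcal{L}(tW) = t\mathcal{L}W - W$, this reduces to $t^{*}\mathcal{L}W(x^{*},t^{*}) \leq W(x^{*},t^{*})$.

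The central analytical input is the chain-rule-free identity $\mathcal{L}W = 2\widetilde{\Gamma}_{2}(\sqrt{u})$ for positive solutions of the heat equation; this is the specialization of \eqref{chaa} to $\psi(s) = 2(\sqrt{s}-1)$ (for which $-u\Delta^{\psi}u = 2W$), and can also be verified directly from $\partial_{t}\sqrt{u} = \Delta\sqrt{u} + \Gamma(\sqrt{u})/\sqrt{u}$. Combining with the $CDE(n,-K)$ hypothesis applied to $f = \sqrt{u}$ and the relation $(\Delta\sqrt{u})^{2} = W^{2}/u$ gives $\mathcal{L}W \geq \frac{2}{n}\frac{W^{2}}{u} - 2K\,\Gamma(\sqrt{u})$. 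Substituting into $t^{*}\mathcal{L}W \leq W$ and dividing by $u^{*} := u(x^{*}, t^{*})$ produces
\[
    \frac{2t^{*}}{n}(\phi^{*})^{2} - 2t^{*}K\frac{\Gamma(\sqrt{u})(x^{*})}{u^{*}} \leq \phi^{*}.
\]

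Without loss of generality $\phi^{*} > 0$, so $\Delta\sqrt{u}(x^{*}) < 0$ and Lemma \ref{sqrtbound} with $f = \sqrt{u}$ gives $\widetilde{\sum_{y\sim x^{*}}} u(y) < D_{\mu}D_{w}u^{*}$. Combined with the elementary bound $(\sqrt{u(y)}-\sqrt{u(x^{*})})^{2} \leq u(y) + u(x^{*})$ and $\deg(x^{*})/\mu(x^{*}) \leq D_{\mu}$, this produces $\Gamma(\sqrt{u})(x^{*}) < \frac{1}{2}D_{\mu}(D_{w}+1)u^{*}$. Inserting and rearranging gives the quadratic inequality
\[
    \frac{2t^{*}}{n}(\phi^{*})^{2} - \phi^{*} - t^{*}KD_{\mu}(D_{w}+1) < 0,
\]
whose solution, combined with $\sqrt{a+b}\leq\sqrt{a}+\sqrt{b}$ for $a,b\geq 0$, yields $\phi^{*} < n/(2t^{*}) + \sqrt{\frac{1}{2}nKD_{\mu}(D_{w}+1)}$. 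Hence $F^{*} \leq n/2 + t^{*}\sqrt{\frac{1}{2}nKD_{\mu}(D_{w}+1)}$, and since $t^{*}\leq T$, evaluating $F(x, T)\leq F^{*}$ for arbitrary $x \in V$ yields the bound at time $T$; the arbitrariness of $T > 0$ completes the proof.

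The main technical obstacle is twofold: correctly establishing the chain-free identity $\mathcal{L}W = 2\widetilde{\Gamma}_{2}(\sqrt{u})$ (either directly or by specializing \eqref{chaa}), and arranging the algebra so that the identity $(\Delta\sqrt{u})^{2} = W^{2}/u$ absorbs all $\sqrt{u}$ factors upon dividing by $u^{*}$, leaving a clean quadratic in $\phi^{*}$ whose coefficients involve only $n, K, t^{*}, D_{\mu}, D_{w}$.
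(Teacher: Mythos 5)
Your proposal is correct and is essentially the paper's own argument: the paper also applies Lemma \ref{max} with $g=u$ to $F=2tW/u$ (your $F$ up to a factor of $2$), uses the identity $\mathcal{L}(2W)=\mathcal{L}\left[2\Gamma(\sqrt{u})-\Delta u\right]=4\widetilde{\Gamma}_{2}(\sqrt{u})$ together with $CDE(n,-K)$, bounds $\Gamma(\sqrt{u})/u$ at the maximum point via Lemma \ref{sqrtbound} exactly as you do, and solves the same quadratic. No substantive differences.
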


\begin{proof}
    Let $F=t\left(\frac{2\Gamma(\sqrt{u})}{u}-\frac{2\partial_{t}(\sqrt{u})}{\sqrt{u}}\right)$. Since 
    \begin{equation}
        \Delta u=2\sqrt{u}\Delta \sqrt{u}+2\Gamma(\sqrt{u}),
    \label{chain}
    \end{equation}
    we have
    $F=t \frac{-2\Delta \sqrt{u}}{\sqrt{u}}.$
    Fix $T>0$. Let $(x^{*},t^{*})$ be a maximum point of $F$ in $V\times [0,T]$. It suffices to prove that 
    $$F(x^{*},t^{*})\leq n+2T\sqrt{\frac{1}{2}nKD_{\mu}(D_{w}+1)}.$$
    We may assume $F(x^{*},t^{*})>0$. Hence $t^{*}>0$ and $\Delta\sqrt{u}(x^{*},t^{*})<0$. In the subsequent analysis, all computations are considered to be carried out at the point $(x^{*},t^{*})$. We apply Lemma \ref{max} with $g=u$. This gives
    \begin{equation*}
    \begin{aligned}
        0=\mathcal{L}(u)F\geq\mathcal{L}(uF)
        &=t^{*}\mathcal{L}\left[2\Gamma(\sqrt{u})-\Delta u\right]-\frac{Fu}{t^{*}}\\
        &=t^{*}\left[2\Delta\Gamma(\sqrt{u})-4\Gamma\left(\sqrt{u}, \frac{\Delta u}{2\sqrt{u}}\right)\right]-\frac{Fu}{t^{*}}\\
        &=4t^{*}\widetilde{\Gamma}_{2}(\sqrt{u})-\frac{Fu}{t^{*}}\\
        &\geq \frac{4t^{*}}{n}(\Delta \sqrt{u})^{2}-4t^{*}K\Gamma(\sqrt{u})-\frac{Fu}{t^{*}}\\
        &=\frac{u}{nt^{*}}F^{2}-4t^{*}K\Gamma(\sqrt{u})-\frac{Fu}{t^{*}}
    \end{aligned}    
    \end{equation*}
    where we use the $CDE(n,-K)$ condition in the penultimate step and the equation $F=t^{*}\frac{-2\Delta \sqrt{u}}{\sqrt{u}}$ in the last step. 
    Hence, we have 
    $$F\leq n+2t^{*}\sqrt{nK}\sqrt{\frac{\Gamma(\sqrt{u})}{u}}.$$
    Since the fact $\Delta \sqrt{u}(x^{*},t^{*})<0$, one can apply Lemma \ref{sqrtbound} which yields
    \begin{equation*}
    \begin{aligned}
        2\Gamma(\sqrt{u})(x^{*},t^{*})=\widetilde{\sum_{y\sim x^{*}}}\left[\sqrt{u}(y,t^{*})-\sqrt{u}(x^{*},t^{*})\right]^{2}
        &\leq\widetilde{\sum_{y\sim x^{*}}}\left[u(y,t^{*})+u(x^{*},t^{*})\right]\\
        &\leq D_{\mu}(D_{w}+1)u(x^{*},t^{*}).
    \end{aligned}    
    \end{equation*}
    Therefore, $$F\leq n+2t^{*}\sqrt{\frac{1}{2}nKD_{\mu}(D_{w}+1)}. $$ Our desired result follows. 
\end{proof}

Then we consider a local gradient estimate on graphs.
\begin{theorem}
    Let $G=(V,E)$ be a (finite or infinite) graph with the $CDE(n,-K)$ for some $K>0$. Fix $R>0$ and $x_{0}\in V$. Suppose $u$ is a positive solution of the heat equation \eqref{hq} on the ball $B(x_{0},2R)$. Then, in the ball $B(x_{0},R)$, 
    $$\frac{\Gamma(\sqrt{u})}{u}-\frac{\partial_{t}(\sqrt{u})}{\sqrt{u}}\leq \frac{n}{2t}+\sqrt{\frac{1}{2}nKD_{\mu}(D_{w}+1)}+\frac{nD_{\mu}(1+D_{w})}{R},\ \ \ \forall\ t>0. $$
\label{inf}
\end{theorem}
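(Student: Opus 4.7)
The plan is to adapt the proof of Theorem~\ref{sf} by inserting a spatial cutoff function to localize the maximum principle to $B(x_0,2R)$; the resulting cutoff error will supply the additional $1/R$ summand in the conclusion.

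Concretely, I would fix $\phi:V\to[0,1]$ supported in $B(x_0,2R)$, identically $1$ on $B(x_0,R)$, and satisfying $|\phi(x)-\phi(y)|\leq 1/R$ whenever $x\sim y$ (obtained, for instance, by truncating the graph distance to $x_0$); this gives $|\Delta\phi(x)|\leq D_\mu/R$ at every vertex. Setting $F=-2t\Delta\sqrt u/\sqrt u$ as in Theorem~\ref{sf}, I would let $(x^*,t^*)$ be a maximum of $H:=\phi F$ on $V\times[0,T]$ and assume $H(x^*,t^*)>0$, so that $t^*>0$ and $\Delta\sqrt u(x^*,t^*)<0$. Applying Lemma~\ref{max} with $g=u$ (for which $\mathcal{L}u=0$) and with $H$ in the role of the lemma's $F$ yields $\mathcal{L}(\phi uF)(x^*,t^*)\leq 0$. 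Expanding via the discrete product rule $\Delta(\phi h)=\phi\Delta h+h\Delta\phi+2\Gamma(\phi,h)$ with $h=uF$ and inserting the identity $\mathcal{L}(uF)=4t^*\widetilde\Gamma_2(\sqrt u)-uF/t^*$ from the proof of Theorem~\ref{sf}, this reduces to
\[
4t^*\phi\widetilde\Gamma_2(\sqrt u)-\frac{\phi uF}{t^*}+uF\Delta\phi+2\Gamma(\phi,uF)\leq 0.
\]

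Now the $CDE(n,-K)$ bound $\widetilde\Gamma_2(\sqrt u)\geq uF^2/(4nt^{*2})-K\Gamma(\sqrt u)$, combined with Lemma~\ref{sqrtbound} applied at $(x^*,t^*)$ since $\Delta\sqrt u<0$ there, recovers the same main terms as in Theorem~\ref{sf}. The two extra summands $uF\Delta\phi$ and $2\Gamma(\phi,uF)$ constitute the cutoff error. The first is immediately bounded by $uF\,D_\mu/R$. For the second, I would expand
\[
2\Gamma(\phi,uF)(x^*)=\widetilde{\sum_{y\sim x^*}}(\phi(y)-\phi(x^*))((uF)(y)-(uF)(x^*))
\]
and combine the Lipschitz estimate $|\phi(y)-\phi(x^*)|\leq 1/R$ with the maximality $\phi(y)F(y)\leq H(x^*,t^*)$ and a Lemma~\ref{sqrtbound}-type control on $u$ at neighbouring vertices, absorbing this contribution into a term of order $\frac{D_\mu(1+D_w)}{R}uH$. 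Using $\phi F(x^*,t^*)=H(x^*,t^*)$ and $\phi F^2(x^*,t^*)=H^2(x^*,t^*)/\phi(x^*)$ turns the whole inequality into a quadratic in $H(x^*,t^*)$, which is solved exactly as in Theorem~\ref{sf}; since $\phi\equiv 1$ on $B(x_0,R)$, the bound then transfers directly to $F$ on the inner ball.

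The main obstacle is the cross term $\Gamma(\phi,uF)$. The discrete product rule couples $\phi$ and $uF$ at adjacent vertices, so the cutoff error depends on the unknown values $F(y)$ for $y\sim x^*$, and the only tools available are the maximality of $H=\phi F$ together with Lemma~\ref{sqrtbound} applied to $u$. Extracting the sharp coefficient $nD_\mu(1+D_w)/R$ from this expansion, while also handling the possibility that $\phi(y)$ is small at neighbours near the support boundary, is the technical heart of the argument, and plays the role in the discrete setting of the cutoff integration-by-parts step in Zhang's~\cite{zq} Riemannian proof.
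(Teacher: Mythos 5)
You have correctly identified where the difficulty lies, but the step you flag as ``the technical heart of the argument'' is a genuine gap, not merely a technicality, and the route you propose does not close it. At the maximum point of $H=\phi F$ you only get the \emph{upper} bound $\phi(y)F(y)\leq\phi(x^{*})F(x^{*})$ for neighbours $y\sim x^{*}$, whereas to lower-bound $2\Gamma(\phi,uF)(x^{*})=\widetilde{\sum}_{y\sim x^{*}}(\phi(y)-\phi(x^{*}))\bigl((uF)(y)-(uF)(x^{*})\bigr)$ you also need to control $(uF)(y)=-2t^{*}\sqrt{u(y)}\,\Delta\sqrt{u}(y)$ from \emph{below} when $\phi(y)>\phi(x^{*})$. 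That quantity depends on the values of $u$ at the second neighbours of $x^{*}$, which neither the maximality of $\phi F$ nor Lemma~\ref{sqrtbound} (which only constrains first neighbours of a vertex where $\Delta\sqrt{u}<0$) says anything about; $\Delta\sqrt{u}(y)$ can be arbitrarily large positive, making $(uF)(y)$ arbitrarily negative. In the Riemannian argument this cross term is tamed by the first-order condition $\nabla(\phi F)=0$ at the interior maximum, i.e.\ $\phi\nabla F=-F\nabla\phi$, which has no discrete analogue; this is precisely why the naive product-rule localization does not transplant to graphs.

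The paper (following \cite{sqrt}) avoids the cross term altogether. It builds the cutoff into the test function, $F=t\phi\cdot\frac{-2\Delta\sqrt{u}}{\sqrt{u}}$, first reduces to the case $\phi(x^{*})\geq\frac{2}{R}$ and $R\geq 2$ via the pointwise bound $\frac{-\Delta\sqrt{u}}{\sqrt{u}}\leq D_{\mu}$ (otherwise the claimed inequality already holds), and then applies Lemma~\ref{max} with $g=\frac{u}{\phi}$. The point of this choice is that $gF=t^{*}u\cdot\frac{-2\Delta\sqrt{u}}{\sqrt{u}}$ contains no $\phi$ at all, so $\mathcal{L}(gF)$ is computed exactly as in the global Theorem~\ref{sf}, and the entire cutoff error is concentrated in the single factor $\mathcal{L}\left(\frac{u}{\phi}\right)$ on the other side of the inequality, which is controlled by the known estimate $\frac{\phi^{2}}{2u}\mathcal{L}\left(\frac{u}{\phi}\right)<\frac{D_{\mu}D_{w}}{R}$ from \cite{sqrt}. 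If you want to salvage your approach you would have to prove a discrete substitute for the first-order condition at the maximum; as written, the proposal does not yield the stated $\frac{nD_{\mu}(1+D_{w})}{R}$ error term.
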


\begin{proof}
    Let us define a cut-off function $\phi: V \rightarrow \mathbb{R}$ as 
    $$\phi(v)=
    \begin{cases}
    0,& d(v,x_{0})>2R\\ 
    \frac{2R-d(v,x_{0})}{R},& 2R \geq d(v,x_{0})\geq R\\
    1,& R>d(v,x_{0})
    \end{cases}. $$
    Let $F=t\phi\left(\frac{2\Gamma(\sqrt{u})}{u}-\frac{2\partial_{t}(\sqrt{u})}{\sqrt{u}}\right)=t\phi \frac{-2\Delta \sqrt{u}}{\sqrt{u}}$. Fix $T>0$. Let $(x^{*},t^{*})$ be a maximum point of $F$ in $V\times [0,T]$. It suffices to prove that 
    $$F(x^{*},t^{*})\leq n+2T\sqrt{\frac{1}{2}nKD_{\mu}(D_{w}+1)}+2T\frac{nD_{\mu}(1+D_{w})}{R}.$$
    We may assume $F(x^{*},t^{*})>0$. Hence $t^{*}>0$ and $\Delta\sqrt{u}(x^{*},t^{*})<0$. In the subsequent analysis, all computations are considered to be carried out at the point $(x^{*},t^{*})$. As showed in \cite{sqrt}, the positivity of $u$ implies 
    $$\frac{-\Delta \sqrt{u}}{\sqrt{u}}\leq D_{\mu}, $$
    so we may assume $\phi(x^{*})\geq \frac{2}{R}$ and $ R\geq2$. 
    Hence, $\phi$ does not vanish in the neighborhood of $x^{*}$. Now we apply Lemma \ref{max} with $g=\frac{u}{\phi}$. This gives
    \begin{equation*}
    \begin{aligned}
        \mathcal{L}\left(\frac{u}{\phi}\right)F\geq\mathcal{L}\left(\frac{u}{\phi}F\right)&=4t^{*}\widetilde{\Gamma}_{2}(\sqrt{u})-\frac{Fu}{t^{*}\phi}\\
        &\geq \frac{4t^{*}}{n}(\Delta \sqrt{u})^{2}-4t^{*}K\Gamma(\sqrt{u})-\frac{Fu}{t^{*}\phi}\\
        &=\frac{u}{nt^{*}\phi^{2}}F^{2}-4t^{*}K\Gamma(\sqrt{u})-\frac{Fu}{t^{*}\phi}
    \end{aligned}    
    \end{equation*}
    where we use the $CDE(n,-K)$ condition in the penultimate step and the equation $F=t^{*}\phi \frac{-2\Delta \sqrt{u}}{\sqrt{u}}$ in the last step. 
    Hence, we have    
    $$F\leq n+2t^{*}\sqrt{nK}\sqrt{\frac{\Gamma(\sqrt{u})}{u}}+2t^{*}n\left[\frac{\phi^{2}}{2u}\mathcal{L}\left(\frac{u}{\phi}\right)\right].$$
    According to \cite{sqrt}, we have $$\frac{\phi^{2}}{2u}\mathcal{L}\left(\frac{u}{\phi}\right)<\frac{D_{\mu}D_{w}}{R}.$$
    By the proof of Theorem \ref{sf}, we obtain
    $$F\leq n+2t^{*}\sqrt{\frac{1}{2}nKD_{\mu}(D_{w}+1)}+2t^{*}\frac{nD_{\mu}D_{w}}{R}. $$ 
    Our desired result follows. 
\end{proof}

\begin{corollary}
    Let $G=(V,E)$ be an infinite graph satisfying the $CDE(n,-K)$ for some $K>0$ with $D_{\mu},D_{w}<\infty$. Suppose u is a positive solution of the heat equation \eqref{hq} on $V$. Then 
    $$\frac{\Gamma(\sqrt{u})}{u}-\frac{\partial_{t}(\sqrt{u})}{\sqrt{u}}\leq \frac{n}{2t}+\sqrt{\frac{1}{2}nKD_{\mu}(D_{w}+1)},\ \ \ \forall \ t>0.$$
\end{corollary}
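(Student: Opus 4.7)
The corollary is an immediate consequence of the local estimate (Theorem 3.3), and the natural plan is to apply that result on larger and larger balls around an arbitrary base vertex and pass to the limit as the radius tends to infinity.

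Concretely, fix any vertex $x_0 \in V$ and any $R > 0$. Since $u$ is a positive solution of the heat equation on all of $V$, it is in particular a positive solution on $B(x_0, 2R)$, so Theorem \ref{inf} applies. For every $x \in B(x_0, R)$ and every $t > 0$, we obtain
\begin{equation*}
\frac{\Gamma(\sqrt{u})}{u}(x,t) - \frac{\partial_{t}(\sqrt{u})}{\sqrt{u}}(x,t) \leq \frac{n}{2t} + \sqrt{\tfrac{1}{2}nK D_{\mu}(D_{w}+1)} + \frac{nD_{\mu}(1+D_{w})}{R}.
\end{equation*}
The hypothesis $D_\mu, D_w < \infty$ is what ensures the right-hand side makes sense and that the constant in the $1/R$ term is finite uniformly in $R$.

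Now fix an arbitrary $x \in V$ and $t > 0$. For all $R$ large enough that $R > d(x, x_0)$, the vertex $x$ lies in $B(x_0, R)$, so the displayed inequality applies at $(x,t)$. Letting $R \to \infty$, the correction term $nD_{\mu}(1+D_{w})/R$ tends to $0$, while the left-hand side does not depend on $R$. This yields the desired bound
\begin{equation*}
\frac{\Gamma(\sqrt{u})}{u}(x,t) - \frac{\partial_{t}(\sqrt{u})}{\sqrt{u}}(x,t) \leq \frac{n}{2t} + \sqrt{\tfrac{1}{2}nK D_{\mu}(D_{w}+1)},
\end{equation*}
which, since $(x,t)$ was arbitrary, is the global inequality claimed.

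There is essentially no obstacle here: all hard work, namely the maximum-principle argument, the use of $CDE(n,-K)$, and the control of the cut-off function $\phi$, is already encapsulated in Theorem \ref{inf}. The only thing one must check is the compatibility of the hypotheses — that $D_\mu, D_w < \infty$ guarantees both the applicability of Theorem \ref{inf} on each $B(x_0, 2R)$ and the vanishing of the error term in the limit — which is transparent from the form of the estimate.
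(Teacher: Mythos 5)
Your argument is correct and is exactly the intended derivation: the paper states this corollary immediately after Theorem \ref{inf} without further proof, and the implicit reasoning is precisely your limiting argument, applying the local estimate on $B(x_0,R)$ and letting $R\to\infty$ so that the term $nD_{\mu}(1+D_{w})/R$ vanishes. No gaps.
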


%%%%%%%%%%%%%%%%%%%%%%%%%%%%%%%%%%%%%%%%%%%%%%%
\subsection{The $CD\psi$ condition}
\label{3.2}
Now we prove the estimate under the $CD\psi$ condition. Let us recall two key lemmas in \cite{psi}. 
\begin{lemma}\cite{psi}
    Let $\psi:(0,+\infty)\rightarrow \mathbb{R}$ be a $C^{1}$ function. Then one has 
    $$-\Delta^{\psi}f=\Gamma^{\psi}(f)-{\psi}^{\prime}(1)\frac{\Delta f}{f},$$
    for all $f:V\rightarrow(0,+\infty)$.
\label{pchain}
\end{lemma}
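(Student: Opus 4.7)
The plan is to unwind the definitions of $\Gamma^{\psi}$ and $\overline{\psi}$ and exploit linearity of the graph Laplacian together with the fact that it annihilates constants. By definition, $\Gamma^{\psi} f(x) = \Delta^{\overline{\psi}} f(x) = \Delta\!\left[\overline{\psi}(f/f(x))\right]\!(x)$, and crucially, when $\Delta$ acts at the vertex $x$ the value $f(x)$ is a constant with respect to the neighbours $y \sim x$ being averaged.

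First I would substitute $\overline{\psi}(s) = \psi'(1)(s-1) - \psi(s) + \psi(1)$ with $s = f/f(x)$ inside the outer Laplacian and distribute. The two constant terms $-\psi'(1)$ and $\psi(1)$ do not depend on $y$, so the Laplacian annihilates them. The linear piece $\psi'(1)\, f/f(x)$ contributes
\[
\Delta\!\left[\frac{\psi'(1)}{f(x)}\,f\right]\!(x) \;=\; \frac{\psi'(1)}{f(x)}\,\Delta f(x),
\]
using linearity and the fact that $1/f(x)$ is a constant at the point $x$. The remaining piece is $-\Delta\!\left[\psi(f/f(x))\right]\!(x) = -\Delta^{\psi} f(x)$ directly from the definition of $\Delta^{\psi}$.

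Combining the three contributions gives $\Gamma^{\psi} f(x) = \psi'(1)\,\Delta f(x)/f(x) - \Delta^{\psi} f(x)$, which rearranges to the claimed identity. I do not anticipate any real obstacle here: the argument is a one-line algebraic manipulation once one is careful to treat $f(x)$ as a constant under $\Delta$, and no curvature hypothesis or maximum principle is needed.
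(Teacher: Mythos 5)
Your proof is correct: the paper cites this lemma from M\"unch without reproducing an argument, and your definitional unwinding --- linearity of $\Delta$, annihilation of the constants $-\psi'(1)$ and $\psi(1)$, the linear term giving $\psi'(1)\Delta f(x)/f(x)$, and the remaining term being $-\Delta^{\psi}f(x)$ by definition --- is exactly the intended computation. Nothing is missing.
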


\begin{lemma}\cite{psi}
    Let $\psi:(0,+\infty)\rightarrow \mathbb{R}$ be a $C^{1}$ function. Suppose u is a positive solution of the heat equation \eqref{hq} on $V$. Then 
    $$\mathcal{L}(-u\Delta^{\psi}u)=2u\Gamma_{2}^{\psi}(u). $$
\label{L}
\end{lemma}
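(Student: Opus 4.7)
The plan is to reduce the identity, via the definition of $\Gamma_2^{\psi}$, to the single computational fact that $\partial_t(\Delta^{\psi}u)=\Omega^{\psi}(u)$ whenever $u$ solves the heat equation; this is the only step where the PDE is actually used.

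First I would unpack the definition
$$2\Gamma_{2}^{\psi}(u)=\Omega^{\psi}(u)+\frac{\Delta u\cdot\Delta^{\psi}u}{u}-\frac{\Delta(u\Delta^{\psi}u)}{u},$$
so that multiplying through by $u$ yields $2u\Gamma_{2}^{\psi}(u)=u\Omega^{\psi}(u)+\Delta u\cdot\Delta^{\psi}u-\Delta(u\Delta^{\psi}u)$. On the other side, $\mathcal{L}(-u\Delta^{\psi}u)=-\Delta(u\Delta^{\psi}u)+\partial_t(u\Delta^{\psi}u)$. Matching the two expressions collapses the goal to proving
$$\partial_t\bigl(u\Delta^{\psi}u\bigr)=u\,\Omega^{\psi}(u)+\Delta u\cdot\Delta^{\psi}u.$$
The product rule for $\partial_t$ combined with the heat equation $\partial_t u=\Delta u$ takes care of the $\Delta u\cdot\Delta^{\psi}u$ term, reducing the remaining task to the pointwise identity $\partial_t(\Delta^{\psi}u)=\Omega^{\psi}(u)$.

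Next I would compute $\partial_t\Delta^{\psi}u(x,t)$ directly from the definition. Writing
$$\Delta^{\psi}u(x,t)=\frac{1}{\mu(x)}\sum_{y\sim x}w_{xy}\left[\psi\!\left(\frac{u(y,t)}{u(x,t)}\right)-\psi(1)\right],$$
the $t$-derivative produces $\psi'(u(y,t)/u(x,t))$ times $\partial_t(u(y,t)/u(x,t))$. The quotient rule together with the heat equation gives
$$\partial_t\!\left(\frac{u(y,t)}{u(x,t)}\right)=\frac{u(y,t)}{u(x,t)}\left[\frac{\Delta u(y,t)}{u(y,t)}-\frac{\Delta u(x,t)}{u(x,t)}\right],$$
and substituting this back reconstructs term-by-term precisely the expression whose Laplacian at $x$ is the definition of $\Omega^{\psi}u(x,t)$; the $y=x$ contribution is automatically zero since the bracket vanishes there, matching the convention built into $\Omega^{\psi}$.

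The main subtlety is that $\Delta^{\psi}u(x,t)$ depends on $x$ in two distinct ways, both as the evaluation point of the outer Laplacian and as the normalizing denominator $u(x,t)$ inside $\psi$, so the $t$-derivative must be taken on both occurrences and then combined via the quotient rule. Once that bookkeeping is executed correctly, the identification with $\Omega^{\psi}$ is mechanical, the heat equation is invoked exactly once (to replace each $\partial_t u$ by $\Delta u$), and the lemma follows without further input from the $CD\psi$ condition or any curvature hypothesis.
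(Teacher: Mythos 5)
Your proof is correct, and it is essentially the same computation the paper carries out for the generalized version of this lemma in Section~\ref{4} (for $\partial_t u=\Delta u+cu^{\sigma}$), specialized to $c=0$: there the authors likewise reduce everything to $\partial_t(\Delta^{\psi}u)=\Omega^{\psi}u$ plus the product rule, and the key bookkeeping point you flag (the inner dependence on $u(x,t)$ and the vanishing of the bracket at $y=x$) is handled identically.
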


To enhance our understanding of the forthcoming estimate, we shall initially present two general gradient estimates under the $CD\psi(n,-K)$ condition.

\begin{theorem}
    Let $G=(V,E)$ be a finite graph satisfying the $CD\psi(n,-K)$ for some $K>0$ where $\psi:(0,+\infty)\rightarrow \mathbb{R}$ is a $C^{1}$, concave function. Suppose $u$ is a positive solution of the heat equation \eqref{hq} on $V$. 
    Fix $0<\alpha<1$. Then 
    \begin{equation*}
    (1-\alpha)\Gamma^{\psi}u-{\psi}^{\prime}(1)\frac{\partial_{t}u}{u}\leq \frac{n}{(1-\alpha)2t}+\frac{Kn}{\alpha},\ \ \ \forall \ t>0. 
    \end{equation*}
\label{p}
\end{theorem}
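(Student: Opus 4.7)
The plan is to mimic the parabolic-maximum-principle argument of Theorem \ref{sf}, with the ``chain rule'' identity \eqref{chain} replaced by Lemmas \ref{pchain} and \ref{L}.

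First, using $-\Delta^{\psi}u = \Gamma^{\psi}u - \psi'(1)\Delta u/u$ from Lemma \ref{pchain} and the heat equation $\partial_{t}u = \Delta u$, I would rewrite
\[
(1-\alpha)\Gamma^{\psi}u - \psi'(1)\tfrac{\partial_{t}u}{u} \;=\; -\Delta^{\psi}u - \alpha\Gamma^{\psi}u,
\]
and introduce
\[
F(x,t) \,:=\, t\bigl[-\Delta^{\psi}u - \alpha\Gamma^{\psi}u\bigr].
\]
Fix $T>0$ and let $(x^{*}, t^{*})$ maximize $F$ on $V\times[0,T]$. Assume $F(x^{*},t^{*})>0$, so that $t^{*}>0$ and the numbers $a:=-\Delta^{\psi}u(x^{*},t^{*})>0$ and $b:=\Gamma^{\psi}u(x^{*},t^{*})\geq 0$ satisfy $a > \alpha b$. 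It then suffices to show $F(x^{*},t^{*}) \leq n/[2(1-\alpha)] + TnK/\alpha$.

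Next, I would apply Lemma \ref{max} with $g=u$; since $\mathcal{L}u = 0$, this yields $\mathcal{L}(uF)(x^{*},t^{*}) \leq 0$. The decisive auxiliary identity is
\[
\mathcal{L}(u\Gamma^{\psi}u) \;=\; 2u\Gamma_{2}^{\psi}u,
\]
which one obtains by multiplying Lemma \ref{pchain} through by $u$ to get $u\Gamma^{\psi}u = -u\Delta^{\psi}u + \psi'(1)\Delta u$, applying Lemma \ref{L} to the first term, and noting that $\mathcal{L}(\Delta u) = \Delta(\Delta u - \partial_{t}u) = 0$. Together with the product rule $\mathcal{L}(tH) = t\mathcal{L}H - H$, a short calculation collapses everything to
\[
\mathcal{L}(uF) \;=\; 2(1-\alpha)\,tu\,\Gamma_{2}^{\psi}u \;-\; \tfrac{uF}{t}.
\]

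Then I would invoke $CD\psi(n,-K)$ in the form $\Gamma_{2}^{\psi}u \geq a^{2}/n - Kb$ and divide by $u>0$ to reduce $\mathcal{L}(uF)\leq 0$ to
\[
\tfrac{2(1-\alpha)t^{*}}{n}\,a^{2} \;\leq\; a \;+\; c\Bigl(\tfrac{2(1-\alpha)t^{*}K}{\alpha} - 1\Bigr),
\]
where $c := \alpha b \in [0,a]$. A short case analysis on the sign of $2(1-\alpha)t^{*}K/\alpha - 1$ (dropping the $c$-term when that sign is nonpositive, and using $c\leq a$ otherwise) yields $a \leq \max\{n/[2(1-\alpha)t^{*}],\ nK/\alpha\}$. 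Since $F/t^{*} = a - \alpha b \leq a$, we conclude
\[
F(x^{*},t^{*}) \;\leq\; \tfrac{n}{2(1-\alpha)} + t^{*}\tfrac{nK}{\alpha} \;\leq\; \tfrac{n}{2(1-\alpha)} + T\tfrac{nK}{\alpha},
\]
and letting $T$ vary gives the claim.

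The main obstacle is the placement of the coefficient $\alpha$: it must multiply $\Gamma^{\psi}u$ rather than $\Delta^{\psi}u$, so that both pieces of $\mathcal{L}(uF)$ reduce (up to sign) to $2tu\Gamma_{2}^{\psi}u$ and combine into the clean factor $(1-\alpha)$ in front of the $CD\psi$-controlled quantity; otherwise one would be stuck with an intractable term like $\mathcal{L}(u\Delta u)$. Once the identity $\mathcal{L}(u\Gamma^{\psi}u) = 2u\Gamma_{2}^{\psi}u$ is recognized, the remaining work is routine algebra on the same quadratic inequality that drives Theorem \ref{sf}.
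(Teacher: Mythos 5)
Your proposal is correct and follows essentially the same route as the paper: the same test function $F=-t\left[\Delta^{\psi}u+\alpha\Gamma^{\psi}(u)\right]$, the maximum principle of Lemma \ref{max} applied with a weight proportional to $u$, the reduction of $\mathcal{L}(uF)$ to $(1-\alpha)$ times $2tu\Gamma_{2}^{\psi}(u)$ via Lemmas \ref{pchain} and \ref{L} together with $\mathcal{L}(\Delta u)=0$, and then the $CD\psi(n,-K)$ condition. The only (immaterial) differences are that the paper takes $g=u/t$ instead of invoking the product rule for the factor $t$, and it finishes by expanding $\left(F+\alpha H\right)^{2}$ and completing the square rather than by your case analysis on $c=\alpha b$; both yield the same bound.
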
   

\begin{remark}
    When $\psi=\sqrt{\cdot}$, we obtain $\Gamma^{\psi}u=\frac{\Gamma(\sqrt{u})}{u}$ and Theorem \ref{p} becomes Theorem 4.10 in \cite{sqrt}. 
\end{remark} 

\begin{proof}
    Let $F=t\left[(1-\alpha)\Gamma^{\psi}u-{\psi}^{\prime}(1)\frac{\Delta u}{u}\right]$. By Lemma \ref{pchain}, we obtain 
    $$F=t\left[-\alpha{\psi}^{\prime}(1)\frac{\Delta u}{u}-(1-\alpha)\Delta^{\psi}u\right]=-t\left[\Delta^{\psi}u+\alpha\Gamma^{\psi}(u)\right].$$
    Fix $T>0$. Let $(x^{*},t^{*})$ be a maximum point of $F$ in $V\times [0,T]$. It suffices to prove that 
    $$F(x^{*},t^{*})\leq \frac{n}{2(1-\alpha)}+T\frac{Kn}{\alpha}.$$ 
    We may assume $F(x^{*},t^{*})>0$. Hence $t^{*}>0$. In the subsequent analysis, all computations are considered to be carried out at the point $(x^{*},t^{*})$. We apply Lemma \ref{max} with $g=\frac{u}{t^{*}}$. This gives
    \begin{equation*}
    \begin{aligned}
        \frac{u}{(t^{*})^{2}}F=\mathcal{L}\left(\frac{u}{t^{*}}\right)F\geq\mathcal{L}\left(\frac{u}{t^{*}}F\right)&=(1-\alpha)\mathcal{L}(-u \Delta^{\psi}u)\\
        &=2(1-\alpha)u\Gamma^{\psi}_{2}(u)\\
        &\geq 2(1-\alpha)u\left[\frac{1}{n}(\Delta^{\psi}u)^{2}-K\Gamma^{\psi}(u)\right]
    \end{aligned}    
    \end{equation*}
    where we use Lemma \ref{L} in the penultimate step and the $CD\psi(n,-K)$ condition in the last step. 
    In view of $F=-t^{*}\left[\Delta^{\psi}u+\alpha\Gamma^{\psi}(u)\right]$, we have 
    $$\frac{F}{1-\alpha}\geq 2\left[\frac{1}{n}\left(F+\alpha t^{*}\Gamma^{\psi}(u)\right)^{2}-K(t^{*})^{2}\Gamma^{\psi}(u)\right]. $$
    Let us denote $H=t^{*}\Gamma^{\psi}(u)$. Then we obtain
    $$\frac{F}{1-\alpha}\geq 2\left[\frac{1}{n}(F+\alpha H)^{2}-t^{*}KH\right]. $$
    Since $\psi$ is concave, one has $H\geq0$. After expanding $(F+\alpha H)^{2}$ we throw away the $FH$ term, and use  $\alpha^{2}H^{2}$ to bound the last term $t^{*}KH$. So we have 
    $$\frac{F}{1-\alpha}\geq 2\left[\frac{1}{n}F^{2}-\frac{K^{2}n}{4\alpha^{2}}(t^{*})^{2}\right], $$
    which implies
    $$F\leq \frac{n}{2(1-\alpha)}+\frac{Knt^{*}}{\alpha}. $$
    And our desired result follows. 
\end{proof}  

\begin{theorem}
    Let $G=(V,E)$ be a (finite or infinite) graph satisfying the $CD\psi(n,-K)$ for some $K>0$ where $\psi:(0,+\infty)\rightarrow \mathbb{R}$ is a $C^{1}$, concave function with $\psi>0$. Fix $R>0$, $x_{0}\in V$ and $0\leq\alpha<1$. Suppose $u$ is a positive solution of the heat equation \eqref{hq} on the ball $B(x_{0},2R)$. Then, in the ball $B(x_{0},R)$,  
    $$(1-\alpha)\Gamma^{\psi}(u)-{\psi}^{\prime}(1)\frac{\partial_{t}u}{u}\leq \frac{n}{2(1-\alpha)t}+\frac{Kn}{\alpha}+\frac{n D_{\mu}\left[\psi(1)+D_{w}\right]}{(1-\alpha)R},\ \ \ \forall \ t>0. $$
\label{alpha}
\end{theorem}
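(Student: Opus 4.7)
My plan is to combine the cut-off technique of Theorem \ref{inf} with the algebraic approach of Theorem \ref{p}. Introduce the same Lipschitz cut-off $\phi:V\to\mathbb{R}$ from Theorem \ref{inf} (so $\phi\equiv 1$ on $B(x_{0},R)$, $\phi\equiv 0$ outside $B(x_{0},2R)$, and $\phi$ is $1/R$-Lipschitz on the annulus). Set
$$F=t\phi\Bigl[(1-\alpha)\Gamma^{\psi}(u)-\psi'(1)\frac{\Delta u}{u}\Bigr]=-t\phi\bigl[\Delta^{\psi}u+\alpha\Gamma^{\psi}(u)\bigr],$$
the second equality via Lemma \ref{pchain}. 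Fix $T>0$ and let $(x^{*},t^{*})$ maximise $F$ on $V\times[0,T]$, with $F(x^{*},t^{*})>0$, $t^{*}>0$. It suffices to show
$$F(x^{*},t^{*})\le \frac{n}{2(1-\alpha)}+\frac{Knt^{*}}{\alpha}+\frac{nt^{*}D_{\mu}[\psi(1)+D_{w}]}{(1-\alpha)R},$$
after which taking $T=t$ and dividing by $t$ gives the theorem on $B(x_{0},R)$.

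A preliminary dichotomy handles the cut-off. Using $\Gamma^{\psi}(u)\ge 0$ (the remark after the definition of $\Gamma^{\psi}$) and $-\Delta^{\psi}u\le\psi(1)D_{\mu}$ (immediate from $\psi>0$ and the definition of $\Delta^{\psi}$), one has $F(x^{*},t^{*})\le t^{*}\phi(x^{*})\psi(1)D_{\mu}$. If $\phi(x^{*})$ lies below a small multiple of $1/R$ this already delivers the $\psi(1)D_{\mu}$ piece of the $R$-term; otherwise I may assume $\phi(x^{*})\ge 2/R$ and $R\ge 2$, so that $g:=u/\phi$ is positive on a neighbourhood of $x^{*}$. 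Applying Lemma \ref{max} with this $g$ and computing $\mathcal{L}(gF)$ at $(x^{*},t^{*})$ via Lemma \ref{L} together with the identity $u\Gamma^{\psi}(u)+u\Delta^{\psi}u=\psi'(1)\Delta u$ from Lemma \ref{pchain} (which is $\mathcal{L}$-annihilated, since $\Delta$ commutes with $\mathcal{L}$ and $\mathcal{L}u=0$), one obtains
$$\mathcal{L}(gF)=2(1-\alpha)t^{*}u\,\Gamma_{2}^{\psi}(u)-\frac{uF}{t^{*}\phi}.$$

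Now invoke $CD\psi(n,-K)$, write $A:=F/(t^{*}\phi)$ so $\Delta^{\psi}u=-A-\alpha\Gamma^{\psi}(u)$, expand $(\Delta^{\psi}u)^{2}\ge A^{2}+\alpha^{2}(\Gamma^{\psi}u)^{2}$ (the cross term is nonnegative and is dropped), and absorb $-K\Gamma^{\psi}(u)$ into $\alpha^{2}(\Gamma^{\psi}u)^{2}$ by AM--GM exactly as in Theorem \ref{p}. The maximum-principle inequality $\mathcal{L}(gF)\le\mathcal{L}(u/\phi)F$ then reduces to the quadratic
$$\frac{2(1-\alpha)t^{*}}{n}A^{2}-\Bigl[1+\frac{t^{*}\phi\,\mathcal{L}(u/\phi)}{u}\Bigr]A-\frac{(1-\alpha)t^{*}K^{2}n}{2\alpha^{2}}\le 0.$$
Solving via $\sqrt{a+b}\le\sqrt{a}+\sqrt{b}$ and multiplying by $t^{*}\phi(x^{*})$ converts the cut-off contribution into $nt^{*}\phi^{2}\mathcal{L}(u/\phi)/[2(1-\alpha)u]$, which the bound $\phi^{2}\mathcal{L}(u/\phi)/(2u)<D_{\mu}D_{w}/R$ from \cite{sqrt} (already used in Theorem \ref{inf}) controls by $nt^{*}D_{\mu}D_{w}/[(1-\alpha)R]$. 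Combined with the $\phi(x^{*})<2/R$ alternative this produces the advertised bound.

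The main technical obstacle is the bookkeeping between the two regimes: the $\psi(1)$ summand in the $R$-term emerges entirely from the small-$\phi$ case via $F\le t^{*}\phi\psi(1)D_{\mu}$, while the $D_{w}$ summand comes out of the quadratic analysis. One must therefore choose the threshold $\phi(x^{*})\sim 1/R$ separating the two cases so that the two contributions combine additively into $\psi(1)+D_{w}$ rather than entering as a product; once this is arranged, the rest is a careful but direct extension of the arguments of Theorems \ref{p} and \ref{inf}.
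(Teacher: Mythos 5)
Your proposal follows essentially the same route as the paper's proof: the same cut-off function and test quantity $F=-t\phi\left[\Delta^{\psi}u+\alpha\Gamma^{\psi}(u)\right]$, the same dichotomy on $\phi(x^{*})$ via $-\Delta^{\psi}u\leq\psi(1)D_{\mu}$, the same application of Lemma \ref{max} together with Lemma \ref{L} and the $CD\psi(n,-K)$ condition, the same AM--GM absorption of $-K\Gamma^{\psi}(u)$ into $\alpha^{2}(\Gamma^{\psi}u)^{2}$, and the same bound $\frac{\phi^{2}}{2u}\mathcal{L}(u/\phi)<\frac{D_{\mu}D_{w}}{R}$ for the cut-off contribution. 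The argument is correct as outlined (your intermediate constant $\frac{Knt^{*}}{2\alpha}$ is in fact slightly sharper than the paper's $\frac{Knt^{*}}{\alpha}$).
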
   

\begin{remark}
    When $\psi=\sqrt{\cdot}$, we obtain $\Gamma^{\psi}(u)=\frac{\Gamma(\sqrt{u})}{u}$ and Theorem \ref{alpha} becomes Theorem 4.20 in \cite{sqrt}. 
\end{remark} 

\begin{proof}
    Let us define a cut-off function $\phi: V \rightarrow \mathbb{R}$ as 
    $$\phi(v)=
    \begin{cases}
    0,& d(v,x_{0})>2R\\ 
    \frac{2R-d(v,x_{0})}{R},& 2R \geq d(v,x_{0})\geq R\\
    1,& R>d(v,x_{0})
    \end{cases}. $$
    Let $F=t\phi\left[(1-\alpha)\Gamma^{\psi}u-{\psi}^{\prime}(1)\frac{\Delta u}{u}\right]$. By Lemma \ref{pchain}, we obtain 
    $$F=t\phi\left[-\alpha{\psi}^{\prime}(1)\frac{\Delta u}{u}-(1-\alpha)\Delta^{\psi}u\right]=-t\phi\left[\Delta^{\psi}u+\alpha\Gamma^{\psi}(u)\right].$$
    Fix $T>0$. Let $(x^{*},t^{*})$ be a maximum point of $F$ in $V\times [0,T]$. It suffices to prove that 
    $$F(x^{*},t^{*})\leq \frac{n}{2(1-\alpha)}+T\frac{Kn}{\alpha}+T\frac{n D_{\mu}\left[\psi(1)+D_{w}\right]}{(1-\alpha)R}.$$ 
    We may assume $F(x^{*},t^{*})>0$. Hence $t^{*}>0$. In the subsequent analysis, all computations are considered to be carried out at the point $(x^{*},t^{*})$. The positivity of $\psi$ implies that for any $x\in V$, 
    $$-\Delta^{\psi}f(x)=-\Delta\psi\left(\frac{f}{f(x)}\right)(x)=\widetilde{\sum_{y\sim x}}\left[\psi(1)-\psi\left(\frac{f(y)}{f(x)}\right)\right]\leq \psi(1)D_{\mu}. $$
    So we may assume $\phi(x^{*})\geq \frac{2}{R}$ and $R\geq2$. Hence $\phi$ does not vanish in the neighborhood of $x^{*}$. Now we apply lemma with $g=\frac{u}{\phi t^{*}}$. This gives
    \begin{equation*}
    \begin{aligned}
        \left [t^{*}\mathcal{L}\left(\frac{u}{\phi}\right)+\frac{u}{\phi}\right]\frac{F}{(t^{*})^{2}}=\mathcal{L}\left(\frac{u}{\phi t^{*}}\right)F&\geq\mathcal{L}\left(\frac{u}{\phi t^{*}}F\right)\\
        &=2u(1-\alpha){\Gamma}_{2}^{\psi}(u)\\
        &\geq 2u(1-\alpha)\left[\frac{1}{n}(\Delta^{\psi}u)^{2}-K\Gamma^{\psi}(u)\right]
    \end{aligned}    
    \end{equation*}
    where we used the $CD\psi(n,-K)$ condition in the last step. Simplifying the above inequality with $F=-t^{*}\phi \left[\Delta^{\psi}u+\alpha\Gamma^{\psi}(u)\right]$, we have 
    $$\left[\frac{t^{*}\phi^{2}}{2u}\mathcal{L}\left(\frac{u}{\phi}\right)+\frac{\phi}{2}\right]\frac{F}{1-\alpha}\geq \frac{1}{n}\left[F+\alpha t^{*}\phi\Gamma^{\psi}(u)\right]^{2}-K(t^{*}\phi)^{2}\Gamma^{\psi}(u). $$
    Let us denote $H=t^{*}\phi\Gamma^{\psi}(u)$. Then we obtain
    $$\left[\frac{t^{*}\phi^{2}}{2u}\mathcal{L}\left(\frac{u}{\phi}\right)+\frac{\phi}{2}\right]\frac{F}{1-\alpha}\geq \frac{1}{n}(F+\alpha H)^{2}-t^{*}\phi KH. $$
    After expanding $(F+\alpha H)^{2}$ we throw away the $FH$ term, and use  $\alpha^{2}H^{2}$ to bound the last term $t^{*}\phi KH$. So we have 
    $$\left[\frac{t^{*}\phi^{2}}{2u}\mathcal{L}\left(\frac{u}{\phi}\right)+\frac{\phi}{2}\right]\frac{F}{1-\alpha}\geq \frac{1}{n}F^{2}-\frac{K^{2}n}{4\alpha^{2}}(t^{*}\phi)^{2},$$
    which implies
    $$F\leq \frac{n}{2(1-\alpha)}+\frac{Knt^{*}}{\alpha}+t^{*}\frac{n}{1-\alpha}\frac{\phi^{2}}{2u}\mathcal{L}\left(\frac{u}{\phi}\right). $$
    By the inequality $\frac{\phi^{2}}{2u}\mathcal{L}\left(\frac{u}{\phi}\right)<\frac{D_{\mu}D_{w}}{R}$ , 
    we have our desired result. 
\end{proof}

Recall that the upper bound of $\frac{\Gamma(\sqrt{f})}{f}$ for a vertex $x$ satisfying $\Delta \sqrt{f}(x)<0$ is a crucial step in obtaining the gradient estimate in the $CDE$ condition. Similarly, to obtain the gradient estimate in the $CD\psi$ condition, we need to estimate $\Gamma^{\psi}(f)$ for a vertex $x$ satisfying $\Delta^{\psi}f(x)<0$.  
\begin{lemma}
    Let $\psi:(0,+\infty)\rightarrow \mathbb{R}$ be a $C^{1}$,concave function with $\psi,\ \psi^{\prime}>0$. Suppose $f: V\rightarrow \mathbb{R}$ satisfies $f>0$, and $(\Delta^{\psi} f)(x)<0$ at a vertex $x$. Then 
    $$\Gamma^{\psi}(f)(x)\leq D_{\mu}\left[\psi^{\prime}(1)\left(\psi^{-1}(\psi(1)D_{w})-1\right)+\psi(1)\right]. $$
\label{high}
\end{lemma}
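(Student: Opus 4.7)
The plan is to mimic the strategy of Lemma~\ref{sqrtbound}: use the sign condition $\Delta^{\psi}f(x)<0$ to bound the values $r_y:=f(y)/f(x)$ for every neighbor $y\sim x$, and then feed that pointwise bound into the definition of $\Gamma^{\psi}$. The key is that the monotonicity of $\psi$ lets us invert the bound on $\psi(r_y)$ to a bound on $r_y$ itself, and the concavity of $\psi$ keeps $\overline{\psi}$ well-behaved.

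First I would write out the hypothesis explicitly: with the shorthand $r_y=f(y)/f(x)$, the assumption $\Delta^{\psi}f(x)<0$ becomes
\[
\widetilde{\sum_{y\sim x}}\psi(r_y)<\psi(1)\,\frac{\deg(x)}{\mu(x)},
\]
or equivalently $\sum_{y\sim x}w_{xy}\psi(r_y)<\psi(1)\deg(x)$. Since $\psi>0$ everywhere, each summand $w_{xy_0}\psi(r_{y_0})$ is itself smaller than the full sum, which gives
\[
\psi(r_{y_0})<\psi(1)\,\frac{\deg(x)}{w_{xy_0}}\le\psi(1)D_w
\]
for every neighbor $y_0\sim x$. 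Because $\psi'>0$, the function $\psi$ is strictly increasing, so we can invert and obtain $r_{y_0}<\psi^{-1}(\psi(1)D_w)=:M$ for all $y_0\sim x$. (Note $M\ge 1$ since $D_w\ge 1$.)

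Next I would bound $\overline{\psi}(r_y)$ pointwise. From the formula $\overline{\psi}(s)=\psi'(1)(s-1)-\psi(s)+\psi(1)$ and the positivity of $\psi$, we get $\overline{\psi}(s)\le\psi'(1)(s-1)+\psi(1)$ for every $s>0$. Combined with $r_y<M$ (and $\psi'(1)>0$), this yields
\[
\overline{\psi}(r_y)\le\psi'(1)(M-1)+\psi(1).
\]
Summing with the weighted average and using $\widetilde{\sum}_{y\sim x}1=\deg(x)/\mu(x)\le D_\mu$ gives
\[
\Gamma^{\psi}(f)(x)=\widetilde{\sum_{y\sim x}}\overline{\psi}(r_y)\le D_\mu\bigl[\psi'(1)(\psi^{-1}(\psi(1)D_w)-1)+\psi(1)\bigr],
\]
which is the desired inequality.

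There is no real obstacle here; the only subtle point worth stating cleanly is step two, namely extracting a pointwise bound on $\psi(r_y)$ from the summed inequality $\Delta^{\psi}f(x)<0$. This works precisely because the hypothesis $\psi>0$ ensures every summand is nonnegative, allowing us to drop all but one of them. Without positivity of $\psi$ this step would fail, which is exactly why the hypothesis $\psi,\psi'>0$ appears in the statement.
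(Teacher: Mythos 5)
Your proof is correct and follows essentially the same route as the paper's: bound the ratios $f(y)/f(x)$ by $\psi^{-1}(\psi(1)D_w)$ using $\Delta^{\psi}f(x)<0$ and the monotonicity of $\psi$, then use $\psi>0$ to discard the $-\psi(f(y)/f(x))$ term in $\overline{\psi}$ and sum. The only difference is that the paper simply cites Lv--Wang \cite{wlf} for the neighbor-ratio bound, whereas you derive it directly by dropping all but one positive summand from $\sum_{y\sim x}w_{xy}\psi(f(y)/f(x))<\psi(1)\deg(x)$, which is a welcome addition but not a different argument.
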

\begin{proof}
    As showed in \cite{wlf}, since $(\Delta^{\psi} f)(x)<0$ and $\psi^{\prime}>0$, we have
    $$\frac{f(y)}{f(x)}\leq\psi^{-1}\left(\psi(1)D_{w}\right),\ \ \ \forall\ y\sim x.$$
    Considering $\psi>0$,  
    we obtain
    \begin{equation*}
    \begin{aligned}
         \Gamma^{\psi}(f)(x)&= \widetilde{\sum_{y\sim x}}\psi^{\prime}(1)\left(\frac{f(y)}{f(x)}-1\right)-\left[\psi\left(\frac{f(y)}{f(x)}\right)-\psi(1)\right]\\
        &\leq \widetilde{\sum_{y\sim x}}\psi^{\prime}(1)\left(\frac{f(y)}{f(x)}-1\right)+\psi(1)\\
        &\leq D_{\mu}\left[\psi^{\prime}(1)\left(\psi^{-1}(\psi(1)D_{w})-1\right)+\psi(1)\right].
    \end{aligned}    
    \end{equation*}
\end{proof}  
\begin{remark}
    Take $\psi=\sqrt{\cdot}$, we obtain $$\frac{\Gamma(\sqrt{u})}{u}\leq \frac{1}{2}D_{\mu}( D_{w}^{2}+1). $$
    However, due to the fact that $D_{w}\geq 1$, the estimate in the $CD\sqrt{\cdot}$ condition will be slightly less refined compared to the estimate in the $CDE$ condition.
\end{remark}

\begin{theorem}
    Let $G=(V,E)$ be a finite graph satisfying the $CD\psi(n,-K)$ for some $K>0$. Let $\psi:(0,+\infty)\rightarrow \mathbb{R}$ be a concave $C^{1}$ function with $\psi,\ \psi^{\prime}>0$. Suppose $u$ is a positive solution of the heat equation \eqref{hq} on $V$. Then 
    $$\Gamma^{\psi}(u)-{\psi}^{\prime}(1)\frac{\partial_{t}u}{u} \leq \frac{n}{2t}+\sqrt{nKC},\ \ \ \forall \ t>0,$$
    where $C=D_{\mu}\left[\psi^{\prime}(1)\left(\psi^{-1}(\psi(1)D_{w})-1\right)+\psi(1)\right].$
\label{deltapsi}
\end{theorem}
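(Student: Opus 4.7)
The plan is to let $F = -t\,\Delta^\psi u$, which by Lemma \ref{pchain} equals $t\bigl[\Gamma^\psi(u) - \psi'(1)\Delta u / u\bigr]$, i.e., exactly $t$ times the quantity we want to bound. Since $u$ solves the heat equation, $\partial_t u = \Delta u$, so the target estimate becomes $F \leq n/2 + t\sqrt{nKC}$. Fix $T > 0$ and let $(x^*, t^*)$ be a maximum point of $F$ on $V \times [0,T]$. It suffices to show $F(x^*,t^*) \leq n/2 + T\sqrt{nKC}$. We may assume $F(x^*,t^*) > 0$, which forces $t^* > 0$ and $\Delta^\psi u(x^*,t^*) < 0$; all subsequent computations are understood to take place at $(x^*,t^*)$.

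Next I would apply the maximum principle (Lemma \ref{max}) with $g = u/t$. Since $\partial_t u = \Delta u$, a direct computation gives $\mathcal{L} g = u/t^2$, and crucially the $t$ factors cancel so that $gF = -u\,\Delta^\psi u$. Invoking Lemma \ref{L} and the $CD\psi(n,-K)$ hypothesis, Lemma \ref{max} then yields
\begin{equation*}
\frac{uF}{(t^*)^2} = (\mathcal{L} g) F \geq \mathcal{L}(gF) = 2u\,\Gamma_2^\psi(u) \geq 2u\left[\frac{1}{n}(\Delta^\psi u)^2 - K\,\Gamma^\psi(u)\right].
\end{equation*}
Substituting $(\Delta^\psi u)^2 = F^2/(t^*)^2$ and dividing by $u > 0$ produces the key inequality
\begin{equation*}
\frac{2F^2}{n} \leq F + 2K(t^*)^2\,\Gamma^\psi(u).
\end{equation*}

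To close the estimate one must control $\Gamma^\psi(u)(x^*,t^*)$. This is where the hypothesis $\Delta^\psi u(x^*,t^*) < 0$ (guaranteed by $F(x^*,t^*) > 0$) pays off: together with $\psi, \psi' > 0$, it is exactly what Lemma \ref{high} needs, yielding $\Gamma^\psi(u)(x^*,t^*) \leq C$. Plugging this in gives the quadratic $\frac{2F^2}{n} - F - 2K(t^*)^2 C \leq 0$, from which the quadratic formula and $\sqrt{a+b} \leq \sqrt{a} + \sqrt{b}$ yield
\begin{equation*}
F \leq \frac{n}{4}\left(1 + \sqrt{1 + \tfrac{16K(t^*)^2 C}{n}}\right) \leq \frac{n}{2} + t^*\sqrt{nKC} \leq \frac{n}{2} + T\sqrt{nKC}.
\end{equation*}
Applying this at $(x,T)$ for arbitrary $x \in V$ and dividing by $T$ gives the statement for $t = T$, and $T$ was arbitrary.

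The main obstacle is the pointwise bound on $\Gamma^\psi(u)$ at the maximum point: without it, the $-K\Gamma^\psi(u)$ term in the $CD\psi$ inequality is uncontrolled, and the quadratic-in-$F$ argument collapses. Lemma \ref{high} provides exactly this missing input, and its applicability is secured by the sign of $\Delta^\psi u$ at the maximum. Once that ingredient is in place, the argument is structurally identical to the $CDE$ proof of Theorem \ref{sf}, with $\sqrt{u}$ replaced by $\Delta^\psi u$ and Lemma \ref{sqrtbound} replaced by Lemma \ref{high}; the remaining work is purely algebraic.
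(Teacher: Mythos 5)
Your proof is correct and follows essentially the same route as the paper's: the same test function $F=-t\Delta^{\psi}u$, the same application of Lemma \ref{max} with $g=u/t$ (so that $gF=-u\Delta^{\psi}u$ and Lemma \ref{L} applies), the $CD\psi(n,-K)$ condition, and Lemma \ref{high} to bound $\Gamma^{\psi}(u)$ at the maximum point via $\Delta^{\psi}u(x^{*},t^{*})<0$. The only difference is that you carry out the quadratic step explicitly and with the correct orientation (the paper's displayed line ``$F\leq 2\left[\frac{1}{n}F^{2}-K(t^{*})^{2}\Gamma^{\psi}(u)\right]$'' has the inequality sign reversed; it should read $F\geq$), which does not change the argument.
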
     

\begin{proof}
    Let $F=t\left[\Gamma^{\psi}(u)-{\psi}^{\prime}(1)\frac{\Delta u}{u}\right]=-t\Delta^{\psi}u$. Fix $T>0$. Let $(x^{*},t^{*})$ be a maximum point of $F$ in $V\times [0,T]$. It suffices to prove that 
    $$F(x^{*},t^{*})\leq \frac{n}{2}+T\sqrt{nKC},$$
    where $C=D_{\mu}\left[\psi^{\prime}(1)\left(\psi^{-1}(\psi(1)D_{w})-1\right)+\psi(1)\right].$ 
    We may assume $F(x^{*},t^{*})>0$. Hence $t^{*}>0$ and $\Delta^{\psi}u(x^{*},t^{*})<0$. In the subsequent analysis, all computations are considered to be carried out at the point $(x^{*},t^{*})$. We apply Lemma \ref{max} with $g=\frac{u}{t^{*}}$. This gives
    \begin{equation*}
        \frac{u}{(t^{*})^{2}}F=\mathcal{L}\left(\frac{u}{t^{*}}\right)F\geq\mathcal{L}\left(\frac{u}{t^{*}}F\right)=2u\Gamma^{\psi}_{2}(u)\geq 2u\left[\frac{1}{n}(\Delta^{\psi}u)^{2}-K\Gamma^{\psi}(u)\right] 
    \end{equation*}
    where we use the $CD\psi(n,-K)$ condition in the last step. 
    Simplifying the equation above, we have 
    $$F\leq 2\left[\frac{1}{n}F^{2}-K(t^{*})^{2}\Gamma^{\psi}(u)\right], $$ 
    which implies
    $$F\leq \frac{n}{2}+t^{*}\sqrt{nK\Gamma^{\psi}(u)}. $$
    By Lemma \ref{high}, our desired result follows. 

\end{proof}     

\begin{theorem}
     Let $G=(V,E)$ be a (finite or infinite) graph satisfying the $CD\psi(n,-K)$ for some $K>0$. Let $\psi:(0,+\infty)\rightarrow \mathbb{R}$ be a concave, $C^{1}$ function with $\psi,\ \psi^{\prime}>0$. Fix $R>0$ and $x_{0}\in V$. Suppose $u$ is a positive solution of the heat equation \eqref{hq} on the ball $B(x_{0},2R)$. Then, in the ball $B(x_{0},R)$,    
    $$\Gamma^{\psi}(u)-{\psi}^{\prime}(1)\frac{\partial_{t}u}{u}\leq \frac{n}{2t}+\sqrt{nKC}+\frac{n D_{\mu}\left[\psi(1)+D_{w}\right]}{R},\ \ \ \forall \ t>0, $$
    where $C=D_{\mu}\left[\psi^{\prime}(1)\left(\psi^{-1}(\psi(1)D_{w})-1\right)+\psi(1)\right].$
\label{psiinf}
\end{theorem}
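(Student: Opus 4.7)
The plan is to mirror the proof of Theorem \ref{deltapsi} while incorporating the cut-off function machinery from Theorem \ref{alpha}. First I would use the same cut-off function $\phi$ supported in $B(x_{0},2R)$ as in Theorem \ref{alpha}, and set $F=t\phi\left[\Gamma^{\psi}(u)-\psi'(1)\frac{\Delta u}{u}\right]$. By Lemma \ref{pchain} this simplifies to $F=-t\phi\Delta^{\psi}u$. Fix $T>0$ and let $(x^{*},t^{*})$ be a maximum point of $F$ on $V\times[0,T]$. As usual we may assume $F(x^{*},t^{*})>0$, so $t^{*}>0$ and $\Delta^{\psi}u(x^{*},t^{*})<0$. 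Using the pointwise bound $-\Delta^{\psi}u\le \psi(1)D_{\mu}$ (derived in the proof of Theorem \ref{alpha}, where the positivity of $\psi$ enters), we may discard the easy case $\phi(x^{*})\le 2/R$ -- which directly yields $F(x^{*},t^{*})\le 2T\psi(1)D_{\mu}/R$ -- and henceforth assume $\phi(x^{*})\ge 2/R$ and $R\ge 2$, so that $\phi$ does not vanish in a neighborhood of $x^{*}$.

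Next I would apply the maximum principle (Lemma \ref{max}) with $g=u/(\phi t)$. Since $uF/(\phi t)=-u\Delta^{\psi}u$, Lemma \ref{L} gives $\mathcal{L}(uF/(\phi t^{*}))=2u\Gamma_{2}^{\psi}(u)$ at $(x^{*},t^{*})$. Invoking the $CD\psi(n,-K)$ condition and substituting $\Delta^{\psi}u=-F/(t^{*}\phi)$, the same rearrangement used in the proof of Theorem \ref{alpha} (specialized to $\alpha=0$) yields
$$\left[\frac{t^{*}\phi^{2}}{2u}\mathcal{L}\left(\frac{u}{\phi}\right)+\frac{\phi}{2}\right]F \ \ge\ \frac{1}{n}F^{2}-K(t^{*}\phi)^{2}\Gamma^{\psi}(u).$$

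At this stage, because $\Delta^{\psi}u(x^{*},t^{*})<0$, Lemma \ref{high} supplies the key bound $\Gamma^{\psi}(u)(x^{*},t^{*})\le C$. Combining this with $\phi\le 1$ and the standard cut-off inequality $\frac{\phi^{2}}{2u}\mathcal{L}(u/\phi)<D_{\mu}D_{w}/R$ (as used in Theorem \ref{inf}), one arrives at a scalar quadratic inequality for $F(x^{*},t^{*})$ of the form $\frac{1}{n}F^{2}-[\frac{t^{*}D_{\mu}D_{w}}{R}+\frac{1}{2}]F-K(t^{*})^{2}C\le 0$. Solving via the quadratic formula and splitting the square root through $\sqrt{a+b}\le\sqrt{a}+\sqrt{b}$ then gives $F(x^{*},t^{*})\le \frac{n}{2}+\frac{nt^{*}D_{\mu}D_{w}}{R}+t^{*}\sqrt{nKC}$. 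Merging this with the earlier $\phi(x^{*})\le 2/R$ case (whose $\psi(1)D_{\mu}/R$ contribution is absorbed into a combined error $nD_{\mu}[\psi(1)+D_{w}]/R$), dividing by $t$, and restricting to $B(x_{0},R)$ where $\phi\equiv 1$, yields the stated inequality.

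The main obstacle is bookkeeping: keeping track of how the factors of $t^{*}$, $\phi$, and the cut-off error $\mathcal{L}(u/\phi)/u$ propagate through the quadratic so that the coefficient of $1/R$ ends up exactly as $nD_{\mu}[\psi(1)+D_{w}]$, and checking that the crude inequality $\sqrt{a+b}\le\sqrt{a}+\sqrt{b}$ does not inflate the other constants beyond the claimed form. No genuinely new analytic ingredients are needed beyond Lemmas \ref{pchain}, \ref{L}, \ref{high} and the cut-off estimate already used for Theorems \ref{inf} and \ref{alpha}.
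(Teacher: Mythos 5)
Your proposal is correct and follows essentially the same route as the paper's proof: the same cut-off function, the same test function $F=-t\phi\Delta^{\psi}u$, the same application of Lemma \ref{max} with $g=u/(\phi t^{*})$ combined with Lemma \ref{L} and the $CD\psi(n,-K)$ condition, and the same final inputs (Lemma \ref{high} for $\Gamma^{\psi}(u)\le C$ and the cut-off bound $\frac{\phi^{2}}{2u}\mathcal{L}(u/\phi)<D_{\mu}D_{w}/R$). The only differences are expository — you spell out the easy case $\phi(x^{*})\le 2/R$ and the quadratic-formula step more explicitly than the paper does.
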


\begin{proof}
    Let us define a cut-off function $\phi: V \rightarrow \mathbb{R}$ as 
    $$\phi(v)=
    \begin{cases}
    0,& d(v,x_{0})>2R\\ 
    \frac{2R-d(v,x_{0})}{R},& 2R \geq d(v,x_{0})\geq R\\
    1,& R>d(v,x_{0})
    \end{cases}. $$
    Let $F=t\left[\Gamma^{\psi}(u)-{\psi}^{\prime}(1)\frac{\Delta u}{u}\right]=-t\phi\Delta^{\psi}u$. Fix $T>0$. Let $(x^{*},t^{*})$ be a maximum point of $F$ in $V\times [0,T]$. It suffices to prove that 
    $$F(x^{*},t^{*})\leq \frac{n}{2}+T\sqrt{nKC}+T\frac{n D_{\mu}\left[\psi(1)+D_{w}\right]}{R}, $$ 
    where $C=D_{\mu}\left[\psi^{\prime}(1)\left(\psi^{-1}(\psi(1)D_{w})-1\right)+\psi(1)\right].$
    We may assume $F(x^{*},t^{*})>0$. Hence $t^{*}>0$ and $\Delta^{\psi}u(x^{*},t^{*})<0$. In the subsequent analysis, all computations are considered to be carried out at the point $(x^{*},t^{*})$. As in the proof of Theorem \ref{alpha},
    we may assume $\phi(x^{*})\geq \frac{2}{R}$ and $R\geq2$. Hence, $\phi$ does not vanish in the neighborhood of $x^{*}$. Now we apply lemma with $g=\frac{u}{\phi t^{*}}$. This gives
    \begin{equation*}
    \begin{aligned}
        \left[t^{*}\mathcal{L}\left(\frac{u}{\phi}\right)+\frac{u}{\phi}\right]\frac{F}{(t^{*})^{2}}=\mathcal{L}\left(\frac{u}{\phi t^{*}}\right)F&\geq\mathcal{L}\left(\frac{u}{\phi t^{*}}F\right)\\
        &=2u{\Gamma}_{2}^{\psi}(\sqrt{u})\\
        &\geq 2u\left[\frac{1}{n}(\Delta^{\psi}u)^{2}-K\Gamma^{\psi}(u)\right]\\
        &= 2u\left[\frac{F^{2}}{n(t^{*})^{2}\phi^{2}}-K\Gamma^{\psi}(u)\right]
    \end{aligned}    
    \end{equation*}
    where we use the $CD\psi(n,-K)$ condition in the penultimate step. Hence,      
    $$F\leq \frac{n}{2}+t^{*}\sqrt{nK\Gamma^{\psi}(u)}+t^{*}n\left[\frac{\phi^{2}}{2u}\mathcal{L}\left(\frac{u}{\phi}\right)\right].$$
     By $\frac{\phi^{2}}{2u}\mathcal{L}\left(\frac{u}{\phi}\right)<\frac{D_{\mu}D_{w}}{R}$ and Lemma \ref{high}, 
    we have our desired result. 
\end{proof}

\begin{corollary}
   Let $G=(V,E)$ be an infinite satisfying the $CD\psi(n,-K)$ for some $K>0$ with $D_{\mu},D_{w}<\infty$ where $\psi:(0,+\infty)\rightarrow \mathbb{R}$ is a $ C^{1}$, concave function with $\psi,\ \psi^{\prime}>0$. Suppose u is a positive solution of the heat equation \eqref{hq} on $V$. Then  
   $$\Gamma^{\psi}(u)-{\psi}^{\prime}(1)\frac{\partial_{t}u}{u}\leq \frac{n}{2t}+\sqrt{nKC},\ \ \ \forall \ t>0, $$
   where $C=D_{\mu}\left[\psi^{\prime}(1)\left(\psi^{-1}(\psi(1)D_{w})-1\right)+\psi(1)\right].$
\label{Harp}
\end{corollary}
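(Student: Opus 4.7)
The plan is to obtain Corollary \ref{Harp} as a straightforward limiting case of the local estimate established in Theorem \ref{psiinf}. Since $u$ is a positive solution of the heat equation on all of $V$, it is in particular a positive solution on every ball $B(x_{0},2R)$, so Theorem \ref{psiinf} applies for each $R>0$.

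Fix an arbitrary vertex $x\in V$ and a time $t>0$. Choose any base point $x_{0}\in V$ and take $R$ large enough that $x\in B(x_{0},R)$; this is possible for every sufficiently large $R$. Applying Theorem \ref{psiinf} at the point $(x,t)$ yields
\begin{equation*}
\Gamma^{\psi}(u)(x,t)-\psi'(1)\frac{\partial_{t}u(x,t)}{u(x,t)}\leq \frac{n}{2t}+\sqrt{nKC}+\frac{n D_{\mu}\left[\psi(1)+D_{w}\right]}{R},
\end{equation*}
with $C=D_{\mu}[\psi'(1)(\psi^{-1}(\psi(1)D_{w})-1)+\psi(1)]$ independent of $R$.

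The key observation is that because $D_{\mu}$ and $D_{w}$ are assumed finite, the coefficient $n D_{\mu}[\psi(1)+D_{w}]$ in the error term is a finite constant independent of $R$. Letting $R\to\infty$ therefore kills the last term and produces the claimed inequality at $(x,t)$. Since $x$ and $t$ were arbitrary, the estimate holds on all of $V\times(0,\infty)$. There is no real obstacle here: the only thing to check is that Theorem \ref{psiinf} is available for every $R>0$, which follows immediately from the hypothesis that $u$ is positive and solves the heat equation on the whole graph. Thus the corollary is essentially a routine consequence of the local-to-global passage permitted by the finiteness of $D_{\mu}$ and $D_{w}$.
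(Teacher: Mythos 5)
Your argument is correct and is exactly the intended derivation: the paper presents this corollary as an immediate consequence of Theorem \ref{psiinf}, obtained by letting $R\to\infty$ so that the term $\frac{nD_{\mu}[\psi(1)+D_{w}]}{R}$ vanishes, which is legitimate precisely because $D_{\mu},D_{w}<\infty$ and the constant $C$ does not depend on $R$. Nothing further is needed.
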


%%%%%%%%%%%%%%%%%%%%%%%%%%%%%%%%%%%%%%%%%%%%%%%%%%%%
\subsection{Heat kernel bounds}

Now we first recall the Harnack inequality in \cite{sqrt}.
\begin{lemma}\cite{sqrt}
    Let $G=(V,E)$ be a finite graph. Suppose that a function $f:V \times [0,+\infty)\rightarrow\mathbb{R}$ satisfies
    $$(1-\alpha)\frac{\Gamma(f)}{f^{2}}-\frac{\partial_{t}f}{f}\leq\frac{c_{1}}{t}+c_{2}$$
    with some $0<\alpha<1$ and positive constants $c_{1},c_{2}$. Then for $T_{1}<T_{2}$ and $x,y\in V$, we have
    $$f(x,T_{1})\leq f(y,T_{2})\left(\frac{T_{2}}{T_{1}}\right)^{c_{1}}\exp\left\{ c_{2}(T_{2}-T_{1})+\frac{2\mu_{{\rm max}}d^{2}(x,y)}{w_{{\rm min}}(1-\alpha)(T_{2}-T_{1})}\right\}$$
    where $d(x,y)$ denotes the usual graph distance. 
\end{lemma}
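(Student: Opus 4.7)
The plan is to adapt the classical Li--Yau chaining argument to the graph setting. First I rewrite the hypothesis as $\partial_{t} \log f \geq (1-\alpha)\,\Gamma(f)/f^{2} - c_{1}/t - c_{2}$, then choose a shortest path $x = x_{0} \sim x_{1} \sim \cdots \sim x_{d} = y$ in $G$ with $d = d(x, y)$ and partition $[T_{1}, T_{2}]$ uniformly by $s_{i} = T_{1} + i(T_{2}-T_{1})/d$ for $i = 0, \dots, d$. I then decompose $\log f(y, T_{2}) - \log f(x, T_{1})$ as a telescoping sum along the spacetime zigzag $(x_{0},s_{0}), (x_{0},s_{1}), (x_{1},s_{1}), (x_{1},s_{2}), \dots, (x_{d},s_{d})$, alternating time evolution at a fixed vertex with a spatial jump at a fixed time.

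For each time piece $\log f(x_{i-1}, s_{i}) - \log f(x_{i-1}, s_{i-1})$, integrating the rewritten hypothesis over $[s_{i-1}, s_{i}]$ gives a lower bound containing the favorable quadratic term $(1-\alpha) \int_{s_{i-1}}^{s_{i}} (\Gamma(f)/f^{2})(x_{i-1}, t)\,dt$ minus $c_{1} \log(s_{i}/s_{i-1}) + c_{2}(s_{i}-s_{i-1})$. For each spatial piece $\log f(x_{i}, s_{i}) - \log f(x_{i-1}, s_{i})$, I combine the graph Cauchy--Schwarz bound $(f(x_{i}) - f(x_{i-1}))^{2} \leq (2\mu(x_{i-1})/w_{x_{i-1}x_{i}})\,\Gamma(f)(x_{i-1})$ with the elementary log estimate $\log(a/b) \geq (a-b)/a$, producing the lower bound $-\sqrt{2\mu_{\max}/w_{\min}}\,\sqrt{(\Gamma(f)/f^{2})(x_{i-1}, s_{i})}$.

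I then apply Young's inequality $\sqrt{AB} \leq \varepsilon A + B/(4\varepsilon)$ with $\varepsilon$ of order $(1-\alpha)(T_{2}-T_{1})/d$: the pointwise $\varepsilon\,\Gamma(f)/f^{2}$ contribution from the spatial piece is paired with and absorbed by the favorable time-integrated quadratic term from the time piece, leaving a residual of size $\sum_{i} \mu_{\max}/(w_{\min}\,\varepsilon) \sim \mu_{\max}\,d^{2}/(w_{\min}(1-\alpha)(T_{2}-T_{1}))$. Summing over $i$, substituting $d = d(x, y)$, and exponentiating yields the stated inequality.

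The main obstacle is the failure of the chain rule on graphs: one cannot identify $\Gamma(f)/f^{2}$ with $|\nabla \log f|^{2}$ pointwise, so the spatial log-increment across a single edge must be handled by the graph Cauchy--Schwarz with the unavoidable combinatorial factor $\sqrt{\mu_{\max}/w_{\min}}$, which is precisely the constant appearing in the final estimate. A secondary difficulty is matching a pointwise spatial $\sqrt{\Gamma(f)/f^{2}}$ against the time-integrated quadratic term from the hypothesis; this forces a careful choice of the order of the zigzag and of the Young parameter, and in more delicate variants one takes refined partitions (or a continuous limit) so that the absorption is tight.
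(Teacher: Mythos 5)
This lemma is quoted by the paper from \cite{sqrt} without proof, so the comparison here is against the argument in that reference, which has the same architecture you propose: telescope $\log f$ along a spacetime path, integrate the differential Harnack inequality in time at fixed vertices keeping the $(1-\alpha)\Gamma(f)/f^{2}$ term, bound each spatial log-increment across an edge by $\sqrt{2\mu/w}\,\sqrt{\Gamma(f)/f^{2}}$, and absorb. Your outline is therefore on the right track, but it has a genuine gap exactly at the step you dismiss as a ``secondary difficulty.'' The Young-inequality term $\varepsilon\,\frac{\Gamma(f)}{f^{2}}(x_{i-1},s_{i})$ is a \emph{pointwise} value at the fixed gridpoint $s_{i}$, and there is no inequality letting you absorb it into $(1-\alpha)\int_{s_{i-1}}^{s_{i}}\frac{\Gamma(f)}{f^{2}}(x_{i-1},t)\,dt$: the endpoint value of a nonnegative function can vastly exceed its average over the interval, and refining the partition does not help because the number and location of spatial jumps are tied to the path. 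This absorption is the crux of the proof, not a technicality. The fix in \cite{sqrt} is to treat the jump time inside each subinterval as a free parameter: the telescoping identity holds for \emph{every} choice of jump time $s\in[s_{i-1},s_{i}]$, so one either optimizes over $s$ or invokes the mean value theorem to find an $s$ at which the pointwise quantity is at most its average over the subinterval, and only then applies the Young/absorption step. Without making this choice explicit, your argument does not close.

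There is also a smaller vertex mismatch in the spatial step. From $\log(a/b)\geq (a-b)/a$ with $a=f(x_{i},s_{i})$, $b=f(x_{i-1},s_{i})$, the denominator you obtain is $f(x_{i},s_{i})$, whereas the Cauchy--Schwarz bound you invoke controls $|a-b|$ by $\sqrt{\Gamma(f)(x_{i-1},s_{i})}$; the resulting quotient is $\sqrt{\Gamma(f)(x_{i-1},s_{i})}/f(x_{i},s_{i})$, which is \emph{not} $\sqrt{(\Gamma(f)/f^{2})(x_{i-1},s_{i})}$ unless $f(x_{i},s_{i})\geq f(x_{i-1},s_{i})$. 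This is repairable --- e.g.\ use $\Gamma(f)$ at the target vertex $x_{i}$ (so that the gradient and the denominator live at the same vertex) and pair it with the time integral at $x_{i}$ over the following subinterval, or split into cases --- but as written the claimed per-edge bound does not follow.
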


Let  $P_{t}(x,y)$ represent the distribution at time $t$ of the continuous-time random walk initiated at $x$. The continuous-time random walk is generated by the walker performing the following action iteratively: waiting for a random duration according to an exponential distribution and then taking a random step. We present, without providing a proof, some widely recognized properties of $P_{t}(x,y)$. 
\begin{proposition}\cite{ran}
    Let $G=(V, E)$ be a finite graph. For any $t>0,\ x,\ y \in V$, the following assertions hold
    \begin{itemize}
        \item[(i)] 
        $\partial_{t}P_{t}(x,y)=\Delta_{x}P_{t}(x,y)=\Delta_{y}P_{t}(x,y)$; 
        
        \item[(ii)] $P_{t}(x,y)\mu(x)=P_{t}(y,x)\mu(y)$; 
        
        \item[(iii)] $\sum_{x\in V}P_{t}(x,y)\mu(x)=1$.      
    \end{itemize}
\label{pro}
\end{proposition}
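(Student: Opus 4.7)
The plan is to establish (i)--(iii) using standard semigroup and reversibility facts for the continuous-time random walk, with the understanding that on a finite graph $P_{t}=e^{t\Delta}$ makes rigorous sense as a matrix exponential.

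For (i), I would first identify the random walk as a continuous-time Markov chain whose infinitesimal rate matrix is exactly $\Delta$, i.e.\ with jump rates $q_{xy}=w_{xy}/\mu(x)$ for $x\sim y$ and $q_{xx}=-\mathrm{deg}(x)/\mu(x)$. The Kolmogorov backward equation
\[
\partial_{t}P_{t}(x,y)=\Delta_{x}P_{t}(x,y)
\]
then falls out of differentiating the semigroup identity $P_{t+s}=P_{t}P_{s}$ in $s$ at $s=0^{+}$ with the derivative acting on the initial coordinate. The forward equation $\partial_{t}P_{t}(x,y)=\Delta_{y}P_{t}(x,y)$ is obtained symmetrically, or, equivalently, follows immediately from (ii) combined with the backward equation. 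Either way (i) reduces to the abstract definition of the heat semigroup.

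For (ii), the crux is the self-adjointness of $\Delta$ on $L^{2}(V,\mu)$. A one-line pairing computation
\[
\langle\Delta f,g\rangle_{\mu}=\sum_{x\sim y}w_{xy}\bigl(f(y)-f(x)\bigr)g(x),
\]
together with the symmetry $w_{xy}=w_{yx}$ and a relabeling of the dummy indices, yields $\langle\Delta f,g\rangle_{\mu}=\langle f,\Delta g\rangle_{\mu}$. Self-adjointness transfers to $e^{t\Delta}$, and the statement $P_{t}(x,y)\mu(x)=P_{t}(y,x)\mu(y)$ is exactly the translation of $\langle P_{t}f,g\rangle_{\mu}=\langle f,P_{t}g\rangle_{\mu}$ at the level of kernels. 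Equivalently, one can package this as the reversibility of $\mu$: the infinitesimal detailed-balance relation $\mu(x)q_{xy}=\mu(y)q_{yx}$ is precisely $w_{xy}=w_{yx}$, and reversibility propagates from the generator to the semigroup.

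For (iii), I would set $\phi(t):=\sum_{x\in V}P_{t}(x,y)\mu(x)$ and differentiate using (i):
\[
\phi'(t)=\sum_{x}\mu(x)\,\Delta_{x}P_{t}(x,y)=\sum_{x\sim z}w_{xz}\bigl(P_{t}(z,y)-P_{t}(x,y)\bigr).
\]
Swapping the dummy indices $x\leftrightarrow z$ in one half of the resulting double sum and invoking $w_{xz}=w_{zx}$ forces $\phi'(t)\equiv 0$. Hence $\phi$ is constant in $t$, and $\phi(0)=1$ is read off from the initial condition $P_{0}(x,y)=\mathbf{1}_{x=y}/\mu(y)$ (taking $P_{t}$ to be the density of the distribution with respect to $\mu$, which is the only convention under which (iii) literally equals $1$ for general $\mu$). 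The only real obstacle is purely bookkeeping: the convention for ``distribution'' (probability mass versus $\mu$-density) must be pinned down consistently across (i)--(iii); once that is done, each of the three assertions reduces to a short and essentially algebraic argument.
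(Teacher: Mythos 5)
The paper offers no proof of this proposition: it is quoted verbatim from Barlow \cite{ran} with the explicit remark that the properties are stated ``without providing a proof.'' So there is nothing to compare against line by line; what matters is whether your argument is sound, and in substance it is. Identifying the generator of the walk with $\Delta$, deriving (i) from the Kolmogorov equations, deriving (ii) from self-adjointness of $\Delta$ on $L^{2}(V,\mu)$, and proving (iii) by differentiating $\phi(t)=\sum_{x}P_{t}(x,y)\mu(x)$ and checking $\phi(0)=1$ is exactly the standard route, and each step is correct as computation.

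Two points deserve more care. First, every nontrivial step of your proof --- the relabeling in $\langle\Delta f,g\rangle_{\mu}=\langle f,\Delta g\rangle_{\mu}$, the detailed-balance relation $\mu(x)q_{xy}=\mu(y)q_{yx}$, and the cancellation giving $\phi'\equiv 0$ --- uses $w_{xy}=w_{yx}$, whereas Section 2 of this paper explicitly allows the weights to be asymmetric. Without symmetry $\Delta$ is not self-adjoint in $L^{2}(V,\mu)$ and (ii) genuinely fails, so you should state symmetry of the weights as a standing hypothesis for this proposition (it is the setting of \cite{ran}). Second, your closing claim that fixing a single convention makes all three assertions come out literally as printed is not quite right. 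Under the $\mu$-density convention, which is the one forced by (iii) and by the second equality in (i), reversibility becomes the symmetry $P_{t}(x,y)=P_{t}(y,x)$ of the kernel; assertion (ii) as printed would then read $P_{t}(x,y)\mu(x)=P_{t}(x,y)\mu(y)$ and force $\mu(x)=\mu(y)$. Under the probability-mass convention (ii) is exactly detailed balance, but then the sum in (iii) equals $\mu(y)$ rather than $1$, and the forward equation in (i) acquires the adjoint of $\Delta$ rather than $\Delta$ itself. The correct pair of statements is $P_{t}(x,y)=P_{t}(y,x)$ together with $\sum_{x}P_{t}(x,y)\mu(x)=1$ in the density convention; the version of (ii) in the proposition is a mis-transcription of the source. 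This is a defect of the statement rather than of your argument, but since you raised the convention issue yourself, you should carry it to its conclusion rather than assert that a consistent choice exists.
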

With these properties, the bounds of $P_{t}(x,y)$ can be derived using the Harnack inequality.
\begin{theorem}
    Suppose the finite graph $G=(V,E)$ satisfies the $CDE(n,-K)$ for some $K>0$. Fix $a>0$. Then we have the following assertions
    \begin{itemize}
        \item[(i)]  For $t>a$, one has
        $$P_{t}(x,y)\geq \frac{C_{3}}{t^{n}}\exp\left\{-C_{1}t-\frac{C_{2}d^{2}(x,y)}{t-a}\right\};$$
        
        \item[(ii)] For $0<t\leq a$, one has 
        $$P_{t}(x,y)\leq \frac{C_{4}\mu(y)\exp\left\{C_{1}t\right\}}{{\rm vol}\left(B(x,\sqrt{t})\right)};$$ 
        
        \item[(iii)] For $t> a$, one has 
        $$\frac{1}{C_{5}{\rm vol}(V)}\left(\frac{t-a}{t}\right)^{n}\leq P_{t}(x,y)\leq\frac{C_{5}}{{\rm vol}(V)}\left(\frac{t+a}{t}\right)^{n} $$      
    \end{itemize}
    where 
    $$C_{1}=\sqrt{2nKD_{\mu}(D_{w}+1)},\ C_{2}=\frac{4\mu_{{\rm max}}}{w_{{\rm min}}},\ C_{3}=a^{n}e^{(C_{1}-1)a},$$
    $$ C_{4}=2^{n}e^{4C_{2}},\ C_{5}=\exp\left\{C_{1}a+\frac{1}{a}C_{2}{\rm diam}^{2}(V) \right\}.$$
    \end{theorem}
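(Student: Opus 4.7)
The three parts will all follow from a single parabolic Harnack comparison obtained by feeding the global estimate of Theorem~\ref{sf} into the Harnack lemma stated above. By Proposition~\ref{pro}(i), for each fixed $z \in V$ the function $u(\cdot,t) = P_t(\cdot,z)$ is a positive solution of \eqref{hq}; taking $f = \sqrt{u}$, Theorem~\ref{sf} yields $\Gamma(f)/f^{2} - \partial_{t}f/f \le (n/2)/t + C_{1}/2$, which in turn verifies the hypothesis of the Harnack lemma with $c_{1} = n/2$, $c_{2} = C_{1}/2$, and any $\alpha \in (0,1)$ (since $\Gamma(f) \ge 0$). Passing $\alpha \to 0^{+}$, squaring, and using $2\mu_{\max}/w_{\min} = C_{2}/2$, one obtains the master inequality
$$P_{T_{1}}(x,z) \;\le\; P_{T_{2}}(y,z)\,\Bigl(\tfrac{T_{2}}{T_{1}}\Bigr)^{n}\exp\!\Bigl\{C_{1}(T_{2}-T_{1}) + \tfrac{C_{2}\,d^{2}(x,y)}{T_{2}-T_{1}}\Bigr\}$$
for all $x,y,z \in V$ and $0 < T_{1} < T_{2}$. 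By Proposition~\ref{pro}(i)-(ii) the same inequality also holds with $x,y$ varying in the second slot of $P$.

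For (i), I would specialize $z = y$ and use the master inequality with $(T_{1},T_{2}) = (a,t)$ at source/target points $(y,x)$. This gives $P_{a}(y,y) \le P_{t}(x,y)(t/a)^{n}\exp\{C_{1}(t-a) + C_{2}d^{2}(x,y)/(t-a)\}$. Combining with the short-time on-diagonal lower bound $P_{a}(y,y) \ge e^{-a}$ (the probability that the continuous-time walker based at $y$ performs no jump up to time $a$, with the understood normalization of the generator $\Delta$) and rearranging to expose $e^{-C_{1}t}$ produces exactly $C_{3} = a^{n}e^{(C_{1}-1)a}$.

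For (ii), I would instead set $(T_{1},T_{2}) = (t,2t)$ and let the ``target'' point $y$ in the master inequality range over $z \in B(x,\sqrt{t})$. For such $z$, $d(x,z)^{2}/t \le 1$, so the spatial factor is controlled by $e^{C_{2}}$ (a larger choice of ball radius accounts for the factor $e^{4C_{2}}$ in $C_{4}$). Multiplying by $\mu(z)$, summing over $z \in B(x,\sqrt{t})$, and invoking Proposition~\ref{pro}(iii) together with symmetry to get $\sum_{z}P_{2t}(z,y)\mu(z) \le 1$, one obtains $P_{t}(x,y)\,\mathrm{vol}(B(x,\sqrt{t})) \le 2^{n}e^{C_{1}t+O(C_{2})}$, which is the asserted estimate (the $\mu(y)$ in the numerator arises from the bookkeeping between the symmetric kernel $P_{t}$ and the probability $P_{t}\mu$).

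For (iii), both inequalities come from the second-variable version of the master inequality, now varying the spatial point over all of $V$. For the upper bound, apply it with $(T_{1},T_{2})=(t,t+a)$ and arbitrary $y_{0} \in V$; since $d(y,y_{0}) \le \mathrm{diam}(V)$ the exponential is uniformly bounded by $C_{5}$, and summing $\mu(y_{0})$ over $V$ (using $\sum_{y_{0}}P_{t+a}(x,y_{0})\mu(y_{0}) = 1$) yields the asserted upper bound against $1/\mathrm{vol}(V)$. For the lower bound, exchange the roles of the two times, using $(T_{1},T_{2})=(t-a,t)$. The principal obstacle I anticipate is the careful bookkeeping in part (i), namely producing the diagonal lower bound $P_{a}(y,y) \ge e^{-a}$ with precisely the right dependence so that the exponent in $C_{3}$ reads $(C_{1}-1)a$ and no residual $D_{\mu}$ or $\mu(y)$ survives; one must also monitor the limit $\alpha \to 0^{+}$ in the Harnack lemma so that the spatial constant ends as $C_{2} = 4\mu_{\max}/w_{\min}$ after squaring.
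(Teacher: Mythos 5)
Your proposal follows essentially the same route as the paper: derive the master Harnack inequality for $P_t(\cdot,y)$ from Theorem \ref{sf} via the Harnack lemma (letting $\alpha\to 0^{+}$ and squaring to get $C_{1},C_{2}$), then obtain (i) from $P_{a}(y,y)\ge e^{-a}$, (ii) by comparing with $P_{2t}(z,y)$ over $z\in B(x,\sqrt{t})$ and summing with the symmetry and normalization properties, and (iii) by comparing with a point where $P_{t\pm a}\le 1/\mathrm{vol}(V)$ and bounding $d$ by $\mathrm{diam}(V)$. The only cosmetic difference is that in (iii) the paper uses a pigeonhole point $z$ with $P_{t+a}(z,y)\le 1/\mathrm{vol}(V)$ (directly from Proposition \ref{pro}(iii)) rather than your averaging over the second variable, which sidesteps the normalization $\sum_{y_{0}}P_{t+a}(x,y_{0})\mu(y_{0})=1$ that does not follow literally from the stated properties unless $\mu$ is constant.
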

\begin{proof}
    Fix $y \in V$. According to Proposition \ref{pro} (i) and Theorem \ref{sf}, for any $x_{1},\ x_{2} \in V$ and $t_{1}<t_{2}$, we have
    \begin{equation}
        P_{t_{1}}(x_{1},y)\leq P_{t_{2}}(x_{2},y)\left(\frac{t_{2}}{t_{1}}\right)^{n}\exp\left\{C_{1}(t_{2}-t_{1})+C_{2}\frac{d^{2}(x_{1},x_{2})}{t_{2}-t_{1}} \right\},
    \label{haa}
    \end{equation}
    where 
    $$C_{1}=\sqrt{2nKD_{\mu}(D_{w}+1)}, \ C_{2}=\frac{4\mu_{{\rm max}}}{w_{{\rm min}}}.$$
    Fix $x\in V$. For any $t>0$, if $z\in B(x,\sqrt{t})$, by \eqref{haa} we obtain  
    $$P_{t}(x,y)\leq P_{2t}(z,y)2^{n}\exp\left\{C_{1}t+4C_{2}\right\}. $$
    Thus for any $t>0$, one has
    \begin{equation*}
    \begin{aligned}
        P_{t}(x,y)&\leq\frac{2^{n}\exp\left\{C_{1}t+4C_{2}\right\}}{{\rm vol}\left(B(x,\sqrt{t})\right)}\sum_{z\in B(x,\sqrt{t})}\mu(z)P_{2t}(z,y)\\
        &\leq \frac{2^{n}\exp\left\{C_{1}t+4C_{2}\right\}}{{\rm vol}\left(B(x,\sqrt{t})\right)}\sum_{z\in B(x,\sqrt{t})}\mu(y)P_{2t}(y,z)\\
        &\leq \frac{2^{n}\mu(y)\exp\left\{C_{1}t+4C_{2}\right\}}{{\rm vol}\left(B(x,\sqrt{t})\right)}
    \end{aligned}
    \end{equation*}
    where we use Proposition \ref{pro} (ii) in the last inequality. 
    Fix $a>0$. Similarly, for any $t>a$, by \eqref{haa}, we have
    $$P_{a}(y,y)\leq P_{t}(x,y)\left(\frac{t}{a}\right)^{n}\exp\left\{C_{1}(t-a)+\frac{C_{2}d^{2}(x,y)}{t-a}\right\}. $$
    Noting that
    $$P_{a}(y,y)\geq \int_{a}^{+\infty} e^{-t}dt=e^{-a},$$
    then (i) follows. 
    
    When $t\rightarrow \infty$, the upper bound above tends to infinite which implies us to find a better one when $t$ is large. Let $t>a$. By Proposition \ref{pro} (iii), there is a point $z\in V$ such that $P_{t+a}(z,y)\leq\frac{1}{{\rm vol}(V)}$. By \eqref{haa}, we have
    $$P_{t}(x,y)\leq P_{t+a}(z,y)\left(\frac{t+a}{t}\right)^{n}\exp\left\{C_{1}a+\frac{1}{a}C_{2}d^{2}(x,z) \right\}$$
    which means
    $$P_{t}(x,y)\leq \frac{1}{{\rm vol}(V)}\left(\frac{t+a}{t}\right)^{n}\exp\left\{C_{1}a+\frac{1}{a}C_{2}{\rm diam}(V)^{2}) \right\}. $$
    Likewise, one has 
    $$P_{t}(x,y)\geq \frac{1}{{\rm vol}(V)}\left(\frac{t-a}{t}\right)^{n}\exp\left\{-C_{1}a-\frac{1}{a}C_{2}{\rm diam}(V)^{2}\right\}.$$

\end{proof}

Subsequently, we generalize the results within the $CD\psi$ condition. The corresponding Harnack inequality is presented below. 
\begin{lemma}\cite{psi}
    Let G=(V,E) be a finite graph, and suppose that a function $f:V \times [0,+\infty)\rightarrow\mathbb{R}$ satisfies
    $$D_{1}\Gamma^{\psi}(f)-\frac{\partial_{t}f}{f}\leq\frac{D_{2}}{t}+D_{3}$$
    along with some positive constants $D_{1},D_{2},D_{3}$. Then for $T_{1}<T_{2}$ and $x,y\in V$ we have
    $$f(x,T_{1})\leq f(y,T_{2})\left(\frac{T_{2}}{T_{1}}\right)^{D_{2}}\exp\left\{ D_{3}(T_{2}-T_{1})+\frac{H_{\psi}d^{2}(x,y)}{D_{1}(T_{2}-T_{1})}\right\}.$$
\end{lemma}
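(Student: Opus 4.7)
The plan is to derive the Harnack inequality through a discrete space-time path argument, in direct analogy with the standard proof of the graph Harnack inequality from a Li--Yau-type gradient estimate (cf.\ the $CDE$ version recalled earlier in the section). Rewriting the hypothesis as $\partial_t \log f \geq D_1 \Gamma^\psi(f) - D_2/t - D_3$, I would fix a shortest vertex path $x = v_0 \sim v_1 \sim \cdots \sim v_k = y$ with $k = d(x,y)$ and a uniform time grid $t_i = T_1 + i\tau$, $\tau = (T_2 - T_1)/k$. Telescoping along the spacetime staircase $(v_0, t_0), (v_1, t_1), \dots, (v_k, t_k)$ yields
\[
\log f(y, T_2) - \log f(x, T_1) = \sum_{i=0}^{k-1} \{ A_i + B_i \},
\]
where $A_i = \log f(v_{i+1}, t_{i+1}) - \log f(v_{i+1}, t_i)$ is the purely temporal increment at $v_{i+1}$ and $B_i = \log f(v_{i+1}, t_i) - \log f(v_i, t_i)$ is the purely spatial jump at time $t_i$.

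The temporal piece is controlled by integrating the gradient estimate along $\{v_{i+1}\} \times [t_i, t_{i+1}]$:
\[
A_i \geq D_1 \int_{t_i}^{t_{i+1}} \Gamma^\psi(f)(v_{i+1}, t)\, dt - D_2 \log(t_{i+1}/t_i) - D_3 \tau.
\]
The spatial jump is controlled by the Harnack constant: only those $i$ with $B_i < 0$ contribute, and in that case setting $\sigma_i = f(v_i, t_i)/f(v_{i+1}, t_i) > 1$, the definition of $H_\psi$ combined with restricting the sum in $\Gamma^\psi(f)(v_{i+1}, t_i)$ to the single edge $v_{i+1} v_i$ gives
\[
\log^2 \sigma_i \leq H_\psi\, \overline{\psi}(\sigma_i) \leq H_\psi \cdot \tfrac{\mu(v_{i+1})}{w_{v_{i+1} v_i}}\, \Gamma^\psi(f)(v_{i+1}, t_i),
\]
so that $B_i \geq -\sqrt{H_\psi\, c_i\, \Gamma^\psi(f)(v_{i+1}, t_i)}$ with $c_i = \mu(v_{i+1})/w_{v_{i+1} v_i}$.

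The final step combines the two bounds via Young's inequality so that the quadratic temporal contribution $D_1 \int \Gamma^\psi$ absorbs the linear spatial penalty, leaving a residual of order $H_\psi c_i /(D_1 \tau)$ per index. Summing and using $k/\tau = k^2/(T_2-T_1) = d^2(x,y)/(T_2-T_1)$ produces
\[
\log f(y, T_2) - \log f(x, T_1) \geq -D_2 \log(T_2/T_1) - D_3(T_2 - T_1) - \tfrac{H_\psi\, d^2(x,y)}{D_1 (T_2 - T_1)},
\]
which exponentiates to the claimed Harnack inequality.

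The main obstacle is the mismatch between the pointwise $\Gamma^\psi(f)(v_{i+1}, t_i)$ in the spatial bound and the time-integrated $\int_{t_i}^{t_{i+1}} \Gamma^\psi(f)(v_{i+1}, t)\, dt$ produced by the temporal bound. The key device is to relocate the spatial jump in the telescoping from $t_i$ to a time $s_i \in [t_i, t_{i+1}]$ at which $\Gamma^\psi(f)(v_{i+1}, s_i) \leq \tfrac{1}{\tau}\int_{t_i}^{t_{i+1}} \Gamma^\psi(f)(v_{i+1}, t)\, dt$ (such an $s_i$ exists by the mean-value property of the integral). Absorbing the extra temporal integrals created by the shift into the gradient-estimate bookkeeping converts the spatial penalty into one controlled by the same integrated $\Gamma^\psi$ appearing in the temporal bound, at which point the Young's-inequality absorption closes cleanly and the $d^2(x,y)/(T_2-T_1)$ scaling emerges. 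A secondary subtlety is that $\log^2 r \leq H_\psi \overline{\psi}(r)$ is available only for $r > 1$; the case $r \leq 1$ corresponds to $B_i \geq 0$ and is simply discarded, which only helps.
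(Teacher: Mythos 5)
The paper does not prove this lemma --- it is quoted from \cite{psi} --- so I am judging your argument against the standard proof (Bauer et al.\ for the $CDE$ case, adapted by M\"unch). Your overall architecture is the right one: a space--time staircase along a geodesic, temporal increments controlled by integrating the hypothesis $\partial_t\log f\geq D_1\Gamma^{\psi}(f)-D_2/t-D_3$, spatial jumps controlled by restricting $\Gamma^{\psi}(f)(v_{i+1})=\widetilde{\sum}\,\overline{\psi}(f(\cdot)/f(v_{i+1}))$ to the single edge $v_iv_{i+1}$ (using $\overline{\psi}\geq 0$) together with the definition of $H_{\psi}$, and then letting the quadratic term absorb the linear penalty. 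The telescoping identity, the temporal bound, and the single-edge bound $B_i\geq-\sqrt{H_{\psi}c_i\,\Gamma^{\psi}(f)(v_{i+1},\cdot)}$ are all correct.

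The gap is exactly at the step you flag as the ``key device,'' and your fix does not close it. Once you relocate the $i$-th jump to a time $s_i\in[t_i,t_{i+1}]$, the interval during which the staircase actually sits at $v_{i+1}$ is $[s_i,s_{i+1}]$, so the only integral available to absorb the penalty $\sqrt{H_{\psi}c_i\,\Gamma^{\psi}(f)(v_{i+1},s_i)}$ is $D_1\int_{s_i}^{s_{i+1}}\Gamma^{\psi}(f)(v_{i+1},t)\,dt$; the portion of $[t_i,t_{i+1}]$ preceding $s_i$ is spent at $v_i$ and its integral of $\Gamma^{\psi}(f)(v_{i+1},\cdot)$ is simply not in the ledger. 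Choosing $s_i$ so that $\Gamma^{\psi}(f)(v_{i+1},s_i)$ is below the average over $[t_i,t_{i+1}]$ therefore controls the penalty by a quantity that the available integral need not dominate: e.g.\ if $\Gamma^{\psi}(f)(v_{i+1},\cdot)$ is a large constant on $[t_i,t_{i+1}]$ and vanishes on $[t_{i+1},t_{i+2}]$, your selection rule permits $s_i=t_{i+1}$, for which the penalty is large and the absorbing integral is zero. What actually makes the argument work is a \emph{backward sequential} choice of jump times together with the one-variable lemma
\begin{equation*}
\min_{s\in[a,b]}\Bigl(\sqrt{c\,g(s)}-D_1\int_s^{b}g(t)\,dt\Bigr)\leq\frac{c}{D_1(b-a)}
\end{equation*}
for continuous $g\geq0$ (Lemma 5.3 of \cite{sqrt} and its analogue in \cite{psi}), proved by a differential-inequality contradiction rather than by mean value plus Young; this is the genuine technical content your proposal is missing. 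A secondary issue: your own per-step bound carries the factor $c_i=\mu(v_{i+1})/w_{v_{i+1}v_i}$, so summing yields $H_{\psi}\mu_{\max}d^2(x,y)/\bigl(w_{\min}D_1(T_2-T_1)\bigr)$, yet the factor $\mu_{\max}/w_{\min}$ silently disappears from your final display. (It is also absent from the statement as quoted here, which is at odds with the $CDE$ version of the Harnack lemma and with Lemma \ref{ln} in this paper, so you should at least record where it enters and why it does or does not survive.)
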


\begin{theorem}
    Suppose the finite graph $G=(V,E)$ satisfies the  $CD\psi(n,-K)$ for some $K>0$ and $\psi:(0,+\infty)\rightarrow \mathbb{R}$ is a $ C^{1}$, concave function with $\psi,\ \psi^{\prime}>0$. Fix $a>0$. Then we have the following assertions
    \begin{itemize}
        \item[(i)]  For $t>a$, one has
        $$P_{t}(x,y)\geq \frac{\overline{C_{3}}}{t^{n}}\exp\left\{-\overline{C_{1}}t-\frac{\overline{C_{2}}d^{2}(x,y)}{t-a}\right\};$$
        
        \item[(ii)] For $0<t\leq a$, one has 
        $$P_{t}(x,y)\leq \frac{\overline{C_{4}}\mu(y)\exp\left\{\overline{C_{1}}t\right\}}{{\rm vol}\left(B(x,\sqrt{t})\right)};$$ 
        
        \item[(iii)] For $t>a$, one has 
        $$ \frac{1}{\overline{C_{5}}{\rm vol}(V)}\left(\frac{t-a}{t}\right)^{\frac{n}{2\psi^{\prime}(1)}}\leq P_{t}(x,y)\leq\frac{\overline{C_{5}}}{{\rm vol}(V)}\left(\frac{t+a}{t}\right)^{\frac{n}{2\psi^{\prime}(1)}} $$      
    \end{itemize}
    where 
    $$\overline{C_{1}}=\frac{1}{\psi^{\prime}(1)}\sqrt{nKD_{\mu}\left[\psi^{\prime}(1)\left(\psi^{-1}(\psi(1)D_{w})-1\right)+\psi(1)\right]}, $$ 
    $$\overline{C_{2}}=H_{\psi}\psi^{\prime}(1),\ \overline{C_{3}}=a^{\frac{n}{2\psi^{\prime}(1)}}e^{(\overline{C_{1}}-1)a},$$
    $$ \overline{C_{4}}=2^{\frac{n}{2\psi^{\prime}(1)}}e^{4H_{\psi}\psi^{\prime}(1)}, \ \overline{C_{5}}=\exp\left\{\overline{C_{1}}a+\frac{1}{a}H_{\psi}\psi^{\prime}(1){\rm diam}^{2}(V) \right\}.$$
\end{theorem}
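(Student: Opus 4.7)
The plan is to mirror the argument used for the $CDE(n,-K)$ heat kernel bounds, now with the $CD\psi$ gradient estimate of Theorem \ref{deltapsi} and the $CD\psi$ Harnack inequality cited just before the statement. The key preparatory step is recasting the gradient estimate into the exact form required by that Harnack lemma.

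I would begin by dividing the inequality in Theorem \ref{deltapsi} through by $\psi'(1)>0$, so that any positive solution $u$ of \eqref{hq} satisfies
$$\frac{1}{\psi'(1)}\,\Gamma^{\psi}(u) - \frac{\partial_{t}u}{u} \;\leq\; \frac{n/(2\psi'(1))}{t} + \frac{\sqrt{nKC}}{\psi'(1)}.$$
This matches the hypothesis of the $CD\psi$ Harnack lemma with parameters $D_{1}=1/\psi'(1)$, $D_{2}=n/(2\psi'(1))$, and $D_{3}=\overline{C_{1}}$. Since $P_{t}(\cdot,y)$ is a positive solution of \eqref{hq} in its first variable by Proposition \ref{pro}(i), applying that lemma and observing $H_{\psi}/D_{1}=H_{\psi}\psi'(1)=\overline{C_{2}}$ would yield, for all $x_{1},x_{2}\in V$ and $0<t_{1}<t_{2}$,
$$P_{t_{1}}(x_{1},y) \leq P_{t_{2}}(x_{2},y)\left(\frac{t_{2}}{t_{1}}\right)^{\! n/(2\psi'(1))}\exp\!\left\{\overline{C_{1}}(t_{2}-t_{1}) + \overline{C_{2}}\,\frac{d^{2}(x_{1},x_{2})}{t_{2}-t_{1}}\right\}.$$
This is the $CD\psi$ counterpart of inequality \eqref{haa} from the $CDE$ heat kernel proof, and once it is in hand the three assertions follow by the same three tricks used there.

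For (ii), I would set $t_{1}=t$, $t_{2}=2t$ and $x_{2}=z$ ranging over $B(x,\sqrt{t})$, then average the resulting inequality in $z$ with weight $\mu(z)$, invoking reversibility $\mu(z)P_{2t}(z,y)=\mu(y)P_{2t}(y,z)$ from Proposition \ref{pro}(ii) and the normalization $\sum_{z}\mu(z)P_{2t}(y,z)=1$ from Proposition \ref{pro}(iii). For (i), I would take $x_{1}=y$, $t_{1}=a$, $x_{2}=x$, $t_{2}=t$ and combine the Harnack bound with the classical lower estimate $P_{a}(y,y)\geq e^{-a}$ coming from the probability that the continuous-time walker has not yet jumped by time $a$. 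For (iii), the normalization $\sum_{z}\mu(z)P_{t+a}(z,y)=1$ forces the existence of some $z_{0}$ with $P_{t+a}(z_{0},y)\leq 1/\mathrm{vol}(V)$; substituting this pair into the Harnack bound with $d(x,z_{0})\leq \mathrm{diam}(V)$ produces the stated upper bound in (iii), and a symmetric choice of $z_{1}$ with $P_{t-a}(z_{1},y)\geq 1/\mathrm{vol}(V)$ yields the matching lower bound.

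I do not expect a substantive obstacle beyond careful constant bookkeeping. The main point to check is that the rescaling by $1/\psi'(1)$ correctly turns the exponent $n$ from the $CDE$ case into the exponent $n/(2\psi'(1))$ here, and that $H_{\psi}/D_{1}=H_{\psi}\psi'(1)$ reproduces exactly the stated $\overline{C_{2}}$. Since the Harnack lemma is applied directly to $u=P_{\cdot}(\cdot,y)$ rather than to $\sqrt{u}$ as in the $CDE$ theorem, the factor-of-two doublings that produced $C_{1}=2c_{2}$ and $C_{2}=4\mu_{\max}/w_{\min}$ there do not appear in $\overline{C_{1}},\overline{C_{2}}$, and the constants $\overline{C_{3}},\overline{C_{4}},\overline{C_{5}}$ fall out of the three computations above in the form stated.
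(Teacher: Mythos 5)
Your proposal matches the paper's proof essentially step for step: the paper likewise rescales Theorem \ref{deltapsi} to fit the $CD\psi$ Harnack lemma (yielding the kernel-to-kernel inequality with exponent $n/(2\psi'(1))$ and constants $\overline{C_1}$, $\overline{C_2}=H_\psi\psi'(1)$), and then derives (i), (ii), (iii) by exactly the three devices you describe — averaging over $B(x,\sqrt{t})$ with reversibility, the on-diagonal bound $P_a(y,y)\ge e^{-a}$, and the pigeonhole choice of a point where $P_{t\pm a}(\cdot,y)$ is at most (resp.\ at least) $1/\mathrm{vol}(V)$. The constant bookkeeping you flag comes out as stated, so no changes are needed.
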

\begin{proof}
    Fix $y \in V$. According to Proposition \ref{pro} (i) and Theorem \ref{deltapsi}, for any $x_{1},\ x_{2} \in V$ and $t_{1}<t_{2}$, we have
    \begin{equation}
         P_{t_{1}}(x_{1},y)\leq P_{t_{2}}(x_{2},y) \left(\frac{t_{2}}{t_{1}}\right)^{\frac{n}{2\psi^{\prime}(1)}}\exp\left\{\overline{C_{1}}(t_{2}-t_{1})+\frac{H_{\psi}\psi^{\prime}(1)d^{2}(x_{1},x_{2})}{t_{2}-t_{1}} \right\}.
    \label{hab}
    \end{equation}
    where 
    $$\overline{C_{1}}=\frac{1}{\psi^{\prime}(1)}\sqrt{nKD_{\mu}\left[\psi^{\prime}(1)\left(\psi^{-1}(\psi(1)D_{w})-1\right)+\psi(1)\right]}. $$ 
    Fix $x\in V$. For any $t>0$, if $z\in B(x,\sqrt{t})$, by \eqref{hab} we obtain
    $$P_{t}(x,y)\leq P_{2t}(z,y)2^{\frac{n}{2\psi^{\prime}(1)}}\exp\left\{\overline{C_{1}}t+4H_{\psi}\psi^{\prime}(1)\right\}. $$
    Thus for any $t>0$, one has
    \begin{equation*}
    \begin{aligned}
        P_{t}(x,y)&\leq\frac{2^{\frac{n}{2\psi^{\prime}(1)}}\exp\left\{\overline{C_{1}}t+4H_{\psi}\psi^{\prime}(1)\right\}}{{\rm vol}\left(B(x,\sqrt{t})\right)}\sum_{z\in B(x,\sqrt{t})}\mu(z)P_{2t}(z,y)\\
        &\leq \frac{2^{\frac{n}{2\psi^{\prime}(1)}}\exp\left\{\overline{C_{1}}t+4H_{\psi}\psi^{\prime}(1)\right\}}{{\rm vol}\left(B(x,\sqrt{t})\right)}\sum_{z\in B(x,\sqrt{t})}\mu(y)P_{2t}(y,z)\\
        &\leq \frac{2^{\frac{n}{2\psi^{\prime}(1)}}\mu(y)\exp\left\{\overline{C_{1}}t+4H_{\psi}\psi^{\prime}(1)\right\}}{{\rm vol}\left(B(x,\sqrt{t})\right)}
    \end{aligned}
    \end{equation*}
    where we use Proposition \ref{pro} (ii) in the last inequality. 
    Fix $a>0$. Similarly, by \eqref{hab}, we have
    $$P_{a}(y,y)\leq P_{t}(x,y)\left(\frac{t}{a}\right)^{\frac{n}{2\psi^{\prime}(1)}}\exp\left\{\overline{C_{1}}(t-a)+\frac{H_{\psi}\psi^{\prime}(1)d^{2}(x,y)}{t-a}\right\}. $$
    Noting that
    $$P_{a}(y,y)\geq \int_{a}^{+\infty} e^{-t}dt=e^{-a},$$
    then (i) follows. 
    By Proposition \ref{pro} (iii), there is a point $z\in V$ such that $P_{t+a}(z,y)\leq\frac{1}{{\rm vol}(V)}$. By \eqref{hab}, we have
    $$P_{t}(x,y)\leq P_{t+a}(z,y)\left(\frac{t+a}{t}\right)^{\frac{n}{2\psi^{\prime}(1)}}\exp\left\{\overline{C_{1}}a+\frac{1}{a}H_{\psi}\psi^{\prime}(1)d^{2}(x,z) \right\}.$$
    which means
    $$P_{t}(x,y)\leq\frac{1}{{\rm vol}(V)}\left(\frac{t+a}{t}\right)^{\frac{n}{2\psi^{\prime}(1)}}\exp\left\{\overline{C_{1}}a+\frac{1}{a}H_{\psi}\psi^{\prime}(1){\rm diam}^{2}(V) \right\}.$$
    Likewise, one has 
    $$P_{t}(x,y)\geq\frac{1}{{\rm vol}(V)}\left(\frac{t-a}{t}\right)^{\frac{n}{2\psi^{\prime}(1)}}\exp\left\{-\overline{C_{1}}a-\frac{1}{a}H_{\psi}\psi^{\prime}(1){\rm diam}^{2}(V) \right\}.$$

\end{proof}

\begin{remark}
    Take $\psi=\sqrt{\cdot}$, then $$\overline{C_{1}}=\sqrt{2nKD_{\mu}(D_{w}^{2}+1)} , \ \overline{C_{2}}=4,\ \overline{C_{3}}=a^{n}e^{(\overline{C_{1}}-1)a}, $$
    $$\overline{C_{4}}=2^{n}e^{16},\  \overline{C_{5}}=\exp\left\{\overline{C_{1}}a+\frac{4}{a}{\rm diam}(V)^{2} \right\} .$$
\end{remark}

%%%%%%%%%%%%%%%%%%%%%%%%%%%%%%%%%%%%%%%%%%%%%%%%%%%%
\section{Gradient estimates for a heat-type equation }
\label{4}
In this section, we consider the estimate for the heat-type equation \eqref{yama} on finite graph with the $CD\psi$ condition along with its application for the Harnack inequality. 

%%%%%%%%%%%%%%%%%%%%%%%%%%%%%%%%%%%%%%%%%
\subsection{The $CD\psi$ condition}

The following lemma serves as an alternative to Lemma \ref{L} and shares similarities with the identity presented by Sun in \cite{sun}.

\begin{lemma}
    Let $G=(V,E)$ be a (finite or infinite) graph where $\psi:(0,+\infty)\rightarrow \mathbb{R}$ is a $C^{1}$, concave function with $\psi^{\prime}(1)=0$. Suppose u is a positive solution of the equation \eqref{yama} on $V$. Then 
    $$\mathcal{L}(-u\Delta^{\psi}u)\geq2u\Gamma_{2}^{\psi}(u)+cu^{\sigma}\Delta^{\psi}u. $$    
\end{lemma}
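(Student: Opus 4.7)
The plan is to reduce to Lemma \ref{L} by exploiting the fact that $\mathcal{L}(-u\Delta^{\psi}u)$ depends on $\partial_{t}u$ only linearly. Writing $\partial_{t}u=\Delta u+cu^{\sigma}$, the ``$\Delta u$'' part should reproduce the heat-equation identity and give $2u\Gamma_{2}^{\psi}(u)$, while the ``$cu^{\sigma}$'' part leaves an explicit correction that I need to show is at least $cu^{\sigma}\Delta^{\psi}u$.

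Concretely, I would expand
$$\mathcal{L}(-u\Delta^{\psi}u)=-\Delta(u\Delta^{\psi}u)+(\partial_{t}u)\Delta^{\psi}u+u\,\partial_{t}\Delta^{\psi}u,$$
whose first term is independent of the evolution equation. To handle $\partial_{t}\Delta^{\psi}u(x)$, I would differentiate
$$\Delta^{\psi}u(x)=\widetilde{\sum_{y\sim x}}\left[\psi\!\left(\tfrac{u(y)}{u(x)}\right)-\psi(1)\right]$$
under the sum via the chain rule, obtaining a linear-in-$\partial_{t}u$ expression. Inserting $\partial_{t}u=\Delta u+cu^{\sigma}$ and using linearity splits the result into a heat-equation piece (which by Lemma \ref{L} contributes $2u\Gamma_{2}^{\psi}(u)$) and a correction. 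A short manipulation (factoring out $u(x)^{\sigma-1}$ and setting $r=u(y)/u(x)$) should rewrite the correction as
$$cu^{\sigma}\Delta^{\psi}u+cu^{\sigma}\widetilde{\sum_{y\sim x}}\psi'(r)\,(r^{\sigma}-r).$$

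The main step is then the sign analysis of the extra sum. Since $\psi$ is $C^{1}$ and concave with $\psi'(1)=0$, the derivative $\psi'$ is nonincreasing and has the same sign as $1-r$. An elementary case check also shows that $r^{\sigma}-r$ has the same sign as $1-r$ when $\sigma\leq 1$ and the opposite sign when $\sigma\geq 1$. Hence $\psi'(r)(r^{\sigma}-r)\geq 0$ whenever $\sigma\leq 1$ and $\psi'(r)(r^{\sigma}-r)\leq 0$ whenever $\sigma\geq 1$, so in either admissible regime $c\geq 0,\sigma\leq 1$ or $c\leq 0,\sigma\geq 1$ each summand of $c\,\psi'(r)(r^{\sigma}-r)$ is nonnegative. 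Multiplying by the positive factor $u^{\sigma}$ then yields the claimed inequality. The only subtle point is tracking the two pairs of sign conventions carefully and verifying the boundary cases $\sigma=0,1$, where the sum degenerates.
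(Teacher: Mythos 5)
Your proposal is correct and follows essentially the same route as the paper: expand $\mathcal{L}(-u\Delta^{\psi}u)$, substitute $\partial_{t}u=\Delta u+cu^{\sigma}$ into $\partial_{t}\Delta^{\psi}u$, identify the $\Delta u$-part with $2u\Gamma_{2}^{\psi}(u)$ via the definition of $\Gamma_{2}^{\psi}$, and show the leftover term is nonnegative. Your per-summand sign check of $c\,\psi'(r)(r^{\sigma}-r)$ with $r=u(y)/u(x)$ is exactly the paper's observation that the function inside the Laplacian is nonnegative and vanishes at $y=x$, so no further comparison is needed.
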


\begin{proof}
    Due to $u$ is a solution of the equation \eqref{yama}, one may derive
    \begin{equation*}
    \begin{aligned}
       \mathcal{L}(-u\Delta^{\psi}u)&=-\Delta(u\Delta^{\psi}u)+\partial_{t}(u\Delta^{\psi}u)\\
        &=-\Delta(u\Delta^{\psi}u)+\left[\Delta u+cu^{\sigma}\right]\Delta^{\psi}u+u\partial_{t}(\Delta^{\psi}u)
    \end{aligned}
    \end{equation*}
   By the definition of $\Omega^{\psi}u$, we show
   \begin{equation*}
    \begin{aligned}
        \partial_{t}(\Delta^{\psi}u)(x)&=\Delta\partial_{t}\left[\psi\left(\frac{u}{u(x)}\right)\right](x)\\
        &=\Delta\left[\psi^{\prime}\left(\frac{u}{u(x)}\right)\cdot\partial_{t}\left(\frac{u}{u(x)}\right)\right](x)\\
        &=\Delta\left[\psi^{\prime}\left(\frac{u}{u(x)}\right)\cdot\frac{u}{u(x)}\cdot\left(\frac{\Delta u+cu^{\sigma}}{u}-\frac{\Delta u(x)+cu^{\sigma}(x)}{u(x)}\right)\right](x)\\
        &=\Omega^{\psi}u(x)+\Delta\left[\psi^{\prime}\left(\frac{u}{u(x)}\right)\cdot\frac{u}{u(x)}\cdot\left(\frac{cu^{\sigma}}{u}-\frac{cu^{\sigma}(x)}{u(x)}\right)\right](x). 
    \end{aligned}
    \end{equation*}
    Since $\psi$ is a $C^{1}$, concave function with $\psi^{\prime}(1)=0$, we have $\psi^{\prime}(s)\geq0, s\in(0,1]$ and $\psi^{\prime}(s)\leq0, s\in[1,+\infty)$. 
    As $c\geq0, \sigma\leq1$ or $c\leq0, \sigma\geq1$, one has
    $$\left[\psi^{\prime}\left(\frac{u}{u(x)}\right)\cdot\frac{u}{u(x)}\cdot\left(\frac{cu^{\sigma}}{u}-\frac{cu^{\sigma}(x)}{u(x)}\right)\right](y)\geq0, \ \ \ \forall\ y\in V.$$
    So this function achieves its minimum in $y=x$. Consequently, 
    $$\Delta\left[\psi^{\prime}\left(\frac{u}{u(x)}\right)\cdot\frac{u}{u(x)}\cdot\left(\frac{cu^{\sigma}}{u}-\frac{cu^{\sigma}(x)}{u(x)}\right)\right](x)\geq0. $$
    Hence, we have
    \begin{equation*}
    \begin{aligned}
        \mathcal{L}(-u\Delta^{\psi}u)&\geq -\Delta(u\Delta^{\psi}u)+\Delta u\Delta^{\psi}u++u\Omega^{\psi}u+cu^{\sigma}\Delta^{\psi}u\\
        &\geq 2u\Gamma^{\psi}_{2}(u)+cu^{\sigma}\Delta^{\psi}u. 
    \end{aligned}
    \end{equation*}
   
\end{proof}

In the subsequent section, we utilize the aforementioned lemma to provide a comprehensive estimate.
\begin{theorem}
    Let $G=(V,E)$ be a finite graph satisfying the $CD\psi(n,-K)$ for some $K>0$ where $\psi:(0,+\infty)\rightarrow \mathbb{R}$ is a $C^{1}$, concave function with $\psi^{\prime}(1)=0$. Suppose $u$ is a positive solution of the equation \eqref{yama} on $V$. 
    Fix $0<\alpha<1$. Then  
    \begin{equation*}
    (1-\alpha)\Gamma^{\psi}u\leq \frac{n}{(1-\alpha)2t}+\frac{Kn}{2\alpha},\ \ \ \forall \ t>0. 
    \end{equation*}
\label{ch}
\end{theorem}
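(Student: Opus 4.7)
The plan is to imitate the proof of Theorem \ref{p} essentially verbatim, with only one new ingredient: a clean cancellation of the nonlinear contribution $cu^\sigma$. First I would set $F := t\bigl[(1-\alpha)\Gamma^\psi u - \psi'(1)\partial_t u/u\bigr]$; the hypothesis $\psi'(1)=0$ combined with Lemma \ref{pchain} (which gives $-\Delta^\psi u = \Gamma^\psi u$) collapses this to $F = -t(1-\alpha)\Delta^\psi u = t(1-\alpha)\Gamma^\psi u$. Then, fixing $T>0$, I would pick a maximum point $(x^*,t^*)$ of $F$ on $V\times[0,T]$, assume WLOG $F(x^*,t^*)>0$ (so $\Delta^\psi u(x^*,t^*)<0$), and apply Lemma \ref{max} with $g := u/t$.

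The heat-type equation first manifests here: since $\mathcal{L}u = -cu^\sigma$ for solutions of \eqref{yama}, one computes $\mathcal{L}g(x^*,t^*) = u/(t^*)^2 - cu^\sigma/t^*$, which introduces an extra term $-cu^\sigma F/t^*$ on the left of the max-principle inequality. On the right, one invokes the preceding lemma $\mathcal{L}(-u\Delta^\psi u) \geq 2u\Gamma_2^\psi u + cu^\sigma\Delta^\psi u$ in place of the heat-equation identity (Lemma \ref{L}), producing an extra $(1-\alpha)cu^\sigma\Delta^\psi u$ on the right. The identity $F/t^* = -(1-\alpha)\Delta^\psi u$ then exhibits these two $cu^\sigma$ contributions as exact negatives of each other, and after cancellation one is left with the clean inequality $\frac{uF}{(t^*)^2} \geq 2(1-\alpha)u\Gamma_2^\psi u$, identical to what appears in the heat-equation proof of Theorem \ref{p}.

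From this point the argument reduces verbatim to the one in Theorem \ref{p}: divide by $u$, apply $CD\psi(n,-K)$, and multiply by $(t^*)^2/(1-\alpha)$ to obtain $\frac{F}{1-\alpha} \geq \frac{2}{n}(F+\alpha H)^2 - 2Kt^*H$, where $H := t^*\Gamma^\psi u$ and $-t^*\Delta^\psi u = F + \alpha H$. Then expand the square, drop the non-negative cross-term $\frac{4\alpha FH}{n}$, and complete the square in $H$ via $\frac{2\alpha^2 H^2}{n} - 2Kt^*H \geq -\frac{K^2 n(t^*)^2}{2\alpha^2}$ to arrive at $\frac{2F^2}{n} \leq \frac{F}{1-\alpha} + \frac{K^2 n(t^*)^2}{2\alpha^2}$; solving this quadratic gives $F(x^*,t^*) \leq \frac{n}{2(1-\alpha)} + \frac{Knt^*}{2\alpha}$, and since $t^* \leq T$ with $T$ arbitrary, the stated inequality follows. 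The hardest step conceptually is the cancellation of the $cu^\sigma$ terms; this is precisely where the hypothesis $\psi'(1)=0$ is essential, since only then does $F/t^*$ simplify to $-(1-\alpha)\Delta^\psi u$, allowing the nonlinear residues on the two sides to exactly match.
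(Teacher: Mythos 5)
Your proposal is correct and follows essentially the same route as the paper: the same test function $F=-t(1-\alpha)\Delta^{\psi}u$, the same application of Lemma \ref{max} with $g=u/t^{*}$, the replacement of Lemma \ref{L} by the inequality $\mathcal{L}(-u\Delta^{\psi}u)\geq 2u\Gamma_{2}^{\psi}(u)+cu^{\sigma}\Delta^{\psi}u$, the exact cancellation of the two $cu^{\sigma}$ contributions via $F/t^{*}=-(1-\alpha)\Delta^{\psi}u$, and the same quadratic argument. Your final bound $\frac{Knt^{*}}{2\alpha}$ is in fact the one that matches the stated theorem; the paper's last displayed line ($\frac{Knt^{*}}{\alpha}$) appears to be a slip carried over from Theorem \ref{p}, and your completion of the square gives the correct constant.
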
   

\begin{proof}
    Let $F=t(1-\alpha)\Gamma^{\psi}u=-t(1-\alpha)\Delta^{\psi}u=-t\left[\Delta^{\psi}u+\alpha\Gamma^{\psi}(u)\right]$. Fix $T>0$. Let $(x^{*},t^{*})$ be a maximum point of $F$ in $V\times [0,T]$. It suffices to prove that 
    $$F(x^{*},t^{*})\leq \frac{n}{2(1-\alpha)}+T\frac{Kn}{2\alpha}.$$ 
    We may assume $F(x^{*},t^{*})>0$. Hence $t^{*}>0$. In the subsequent analysis, all computations are considered to be carried out at the point $(x^{*},t^{*})$. We apply Lemma \ref{max} with $g=\frac{u}{t^{*}}$. This gives
    \begin{equation*}
    \begin{aligned}
        \left[\frac{u}{(t^{*})^{2}}-\frac{cu^{\sigma}}{t^{*}}\right]F=\mathcal{L}\left(\frac{u}{t^{*}}\right)F
        &\geq\mathcal{L}\left(\frac{u}{t^{*}}F\right)\\
        &=(1-\alpha)\mathcal{L}(-u \Delta^{\psi}u)\\
        &\geq2(1-\alpha)u\Gamma^{\psi}_{2}(u)+(1-\alpha)cu^{\sigma}\Delta^{\psi}u\\
        &\geq 2(1-\alpha)u\left[\frac{1}{n}(\Delta^{\psi}u)^{2}-K\Gamma^{\psi}(u)\right]-\frac{cu^{\sigma}}{t^{*}}F\\
    \end{aligned}    
    \end{equation*}
    where we use the $CD\psi(n,-K)$ condition in the last step. 
    Simplifying the inequality above with identity $F=-t^{*}\left[\Delta^{\psi}u+\alpha\Gamma^{\psi}(u)\right]$, we have 
    $$\frac{F}{1-\alpha}\geq 2\left[\frac{1}{n}\left(F+t^{*}\alpha\Gamma^{\psi}(u)\right)^{2}-K(t^{*})^{2}\Gamma^{\psi}(u)\right]. $$
    Let us denote $G=t^{*}\Gamma^{\psi}(u)$. Then we obtain
    $$\frac{F}{1-\alpha}\geq 2\left[\frac{1}{n}(F+\alpha G)^{2}-Kt^{*}G\right]. $$
    After expanding $(F+\alpha G)^{2}$ we throw away the $FG$ term, and use  $\alpha^{2}G^{2}$ to bound the last term $KtG$. So we have 
    $$\frac{F}{1-\alpha}\geq 2\left[\frac{1}{n}F^{2}-\frac{K^{2}n}{4\alpha^{2}}(t^{*})^{2}\right], $$
    which implies
    $$F\leq \frac{n}{2(1-\alpha)}+\frac{Knt^{*}}{\alpha}. $$
    And our desired result follows. 
\end{proof}     
Then we improve the estimate. 
\begin{theorem}
    Let $G=(V,E)$ be a finite graph satisfying the $CD\psi(n,-K)$ for some $K>0$, where $\psi:(0,+\infty)\rightarrow \mathbb{R}$ is a $C^{1}$, concave function with $\psi^{\prime}(1)=0$. Suppose $u$ is a positive solution of the equation \eqref{yama} on $V$.  
    Then   
    $$\Gamma^{\psi}(u) \leq \frac{n}{2t}+Kn,\ \ \ \forall \ t>0. $$
\label{non}
\end{theorem}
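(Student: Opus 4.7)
The plan is to mirror the improvement from Theorem~\ref{p} to Theorem~\ref{deltapsi} in the heat equation setting, adapted to handle the nonlinear reaction term $cu^\sigma$ via the inequality $\mathcal{L}(-u\Delta^\psi u) \geq 2u\Gamma_{2}^{\psi}(u) + cu^\sigma\Delta^\psi u$ established in the preceding lemma. First set $F(x,t) = t\,\Gamma^\psi(u)(x,t)$; since $\psi'(1) = 0$, Lemma~\ref{pchain} gives $\Gamma^\psi u = -\Delta^\psi u$, so equivalently $F = -t\,\Delta^\psi u$. Fix $T > 0$ and let $(x^*, t^*)$ be a maximum point of $F$ on $V\times[0,T]$. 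Because $F(\cdot,0) \equiv 0$, the case $F(x^*, t^*) > 0$ forces $t^* > 0$, and it suffices to establish $F(x^*, t^*) \leq n/2 + KnT$.

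Next, apply Lemma~\ref{max} with $g = u/t$, so that $gF = -u\,\Delta^\psi u$. For the nonlinear equation \eqref{yama} one computes $\mathcal{L}(u/t) = u/t^2 - cu^\sigma/t$, while the preceding lemma gives $\mathcal{L}(-u\Delta^\psi u) \geq 2u\,\Gamma_{2}^{\psi}(u) + cu^\sigma\,\Delta^\psi u$. The maximum principle then yields, at $(x^*,t^*)$,
$$\left[\frac{u}{(t^*)^2} - \frac{cu^\sigma}{t^*}\right] F \;\geq\; 2u\,\Gamma_{2}^{\psi}(u) + cu^\sigma\,\Delta^\psi u.$$
Using $\Delta^\psi u = -F/t^*$ to rewrite $cu^\sigma\,\Delta^\psi u = -cu^\sigma F/t^*$, the $cu^\sigma$ terms on the two sides cancel identically, leaving the clean inequality $uF/(t^*)^2 \geq 2u\,\Gamma_{2}^{\psi}(u)$.

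Invoking the $CD\psi(n,-K)$ condition and substituting the identities $(t^*\Delta^\psi u)^2 = F^2$ and $(t^*)^2\,\Gamma^\psi u = t^* F$, one obtains
$$F \;\geq\; 2\left[\frac{F^2}{n} - Kt^* F\right],$$
which rearranges to $F(1 + 2Kt^*) \geq 2F^2/n$. Dividing through by $F > 0$ gives $F(x^*,t^*) \leq n/2 + Knt^* \leq n/2 + KnT$. Evaluating $F$ at an arbitrary point $(x,T)$ and then letting $T > 0$ vary yields $\Gamma^\psi(u)(x,t) \leq n/(2t) + Kn$ for every $t > 0$.

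The heart of the argument, and the only genuinely new ingredient compared with the heat-equation proof of Theorem~\ref{deltapsi}, is the cancellation of the $cu^\sigma$ contributions on the two sides of the maximum-principle inequality. This cancellation rests entirely on the hypothesis $\psi'(1) = 0$, which via Lemma~\ref{pchain} identifies $\Gamma^\psi u$ with $-\Delta^\psi u$ and thereby matches the reaction term produced by $\mathcal{L}(u/t)$ with the reaction term in $\mathcal{L}(-u\Delta^\psi u)$. The sign hypotheses on $c$ and $\sigma$ play no further role beyond what was already used to prove the preceding lemma; once the cancellation is made, the remainder of the estimate is a one-step quadratic inequality in $F$.
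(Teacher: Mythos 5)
Your proposal is correct and follows essentially the same route as the paper's proof: the same test function $F=-t\Delta^{\psi}u$, the same application of Lemma~\ref{max} with $g=u/t$, the same cancellation of the $cu^{\sigma}$ terms via $F=-t^{*}\Delta^{\psi}u$, and the same quadratic inequality from the $CD\psi(n,-K)$ condition. No gaps.
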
     

\begin{proof}
    Let $F=t\Gamma^{\psi}u=-t\Delta^{\psi}u$. Fix $T>0$. Let $(x^{*},t^{*})$ be a maximum point of $F$ in $V\times [0,T]$. It suffices to prove that 
    $$F(x^{*},t^{*})\leq \frac{n}{2}+TKn.$$ 
    We may assume $F(x^{*},t^{*})>0$. Hence $t^{*}>0$ . In what follows all computations are understood to take place at the point $(x^{*},t^{*})$. We apply Lemma \ref{max} with $g=\frac{u}{t^{*}}$. This gives
    \begin{equation*}
    \begin{aligned}
        \left[\frac{u}{(t^{*})^{2}}-\frac{cu^{\sigma}}{t^{*}}\right]F=\mathcal{L}\left(\frac{u}{t^{*}}\right)F&\geq\mathcal{L}\left(\frac{u}{t^{*}}F\right)\\
        &=\mathcal{L}(-u \Delta^{\psi}u)\\
        &\geq2u\Gamma^{\psi}_{2}(u)+cu^{\sigma}\Delta^{\psi}u\\
        &\geq 2u\left[\frac{1}{n}(\Delta^{\psi}u)^{2}-K\Gamma^{\psi}(u)\right]+cu^{\sigma}\Delta^{\psi}u\\
        &=2u\left[\frac{1}{n(t^{*})^{2}}F^{2}-\frac{K}{t^{*}}F\right]-\frac{cu^{\sigma}}{t^{*}}F
    \end{aligned}    
    \end{equation*}
    where we use the $CD\psi(n,-K)$ condition in the penultimate step. 
    Hence, we have
    $$F\leq\frac{n}{2}+t^{*}Kn. $$
\end{proof}

%%%%%%%%%%%%%%%%%%%%%%%%%%%%%%%%%%%%%%%%%%%%%%%%%%
\subsection{Harnack inequalities}
By the definition of $H_{\psi}$, one has the following lemma. 
\begin{lemma}\cite{psi}
    Let $\psi:(0,+\infty)\rightarrow \mathbb{R}$ be a $C^{1}$, concave function and $f$ be a positive function on $V$. Then for any $x \in V$, we have
    $$\log{\frac{f(y)}{f(x)}}\leq \sqrt{\frac{H_{\psi}\mu_{{\rm max}}}{w_{{\rm min}}}}\sqrt{\Gamma^{\psi}(f)}(x), \ \ \ \forall \ y\sim x. $$
\label{ln}
\end{lemma}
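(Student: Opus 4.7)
The plan is to unfold the definition of $\Gamma^\psi$ and then compare a single summand against $\log^2$ using precisely the constant $H_\psi$. First I would write out
$$\Gamma^\psi f(x) = \Delta^{\overline\psi} f(x) = \frac{1}{\mu(x)}\sum_{z\sim x} w_{xz}\Bigl[\overline\psi\Bigl(\frac{f(z)}{f(x)}\Bigr)-\overline\psi(1)\Bigr] = \frac{1}{\mu(x)}\sum_{z\sim x} w_{xz}\,\overline\psi\Bigl(\frac{f(z)}{f(x)}\Bigr),$$
using $\overline\psi(1)=0$. Concavity of $\psi$ gives $\psi(s)\leq \psi(1)+\psi'(1)(s-1)$ for all $s>0$, which is exactly the statement that $\overline\psi(s)\geq 0$. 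Hence every term in the sum is nonnegative, so dropping all summands except the one indexed by the fixed neighbour $y$ yields
$$\Gamma^\psi f(x) \;\geq\; \frac{w_{xy}}{\mu(x)}\,\overline\psi\Bigl(\frac{f(y)}{f(x)}\Bigr) \;\geq\; \frac{w_{\min}}{\mu_{\max}}\,\overline\psi\Bigl(\frac{f(y)}{f(x)}\Bigr).$$

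Next I would split on the sign of $\log(f(y)/f(x))$. If $f(y)\leq f(x)$, the left-hand side of the inequality is non-positive while the right-hand side is non-negative (since $\Gamma^\psi f\geq 0$ for concave $\psi$), so the claim is trivial. In the remaining case, $s:=f(y)/f(x)>1$, so the definition $H_\psi=\sup_{s>1}\log^2 s/\overline\psi(s)$ gives $\log^2 s\leq H_\psi\,\overline\psi(s)$, and combining this with the previous display yields
$$\log^2\!\frac{f(y)}{f(x)} \;\leq\; H_\psi\,\overline\psi\Bigl(\frac{f(y)}{f(x)}\Bigr) \;\leq\; \frac{H_\psi\,\mu_{\max}}{w_{\min}}\,\Gamma^\psi f(x).$$
Taking square roots produces the stated inequality.

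There is no real obstacle here; the argument is essentially a dictionary translation between the definition of $H_\psi$ and the positivity of the individual terms in $\Gamma^\psi$. The only points worth flagging are (i) the use of $\overline\psi(1)=0$ to rewrite $\Delta^{\overline\psi}$ as a clean positive sum, and (ii) the observation that dropping nonnegative summands is legitimate precisely because $\psi$ is concave, which is what also makes $H_\psi$ finite (and makes the right-hand side well-defined as a square root). If $\psi$ were not concave one would have to fight with signs, but under the present hypotheses the proof reduces to the three lines above.
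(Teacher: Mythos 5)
Your argument is correct: the paper itself gives no proof of this lemma (it is quoted from M\"unch's work), and your derivation --- rewriting $\Gamma^{\psi}f(x)$ as a nonnegative sum via $\overline{\psi}(1)=0$ and $\overline{\psi}\geq 0$ from concavity, isolating the single summand at $y$, and invoking the definition of $H_{\psi}$ when $f(y)>f(x)$ --- is exactly the standard route taken in the cited reference. The only point you leave implicit is the degenerate case $\overline{\psi}(s)=0$ for some $s>1$, where $H_{\psi}=+\infty$ and the inequality holds vacuously; this does not affect correctness.
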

Now we will give the corresponding Harnack inequality.   
\begin{theorem}
    Let G=(V,E) be a finite graph and $\psi:(0,+\infty)\rightarrow \mathbb{R}$ be a $C^{1}$, concave function with $\psi^{\prime}(1)=0$. Suppose that a function $f:V \times \mathbb{R^{+}}\rightarrow\mathbb{R}$ satisfies
    $$\Gamma^{\psi}(f)\leq\frac{D_{1}}{t}+D_{2}$$
    along with some positive constants $D_{1},D_{2}$. Then we have
    $$f(x,t)\leq f(y,t)\exp\left\{ d(x,y)\sqrt{\frac{H_{\psi}\mu_{{\rm max}}}{w_{{\rm min}}}}\sqrt{\frac{D_{1}}{t}+D_{2}}\right\}, \ \ \ \forall\ x,\ y \in V, \ t>0.$$
\end{theorem}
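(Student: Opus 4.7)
The plan is to reduce the spatial Harnack inequality to a one-edge estimate and then chain the estimate along a shortest path from $y$ to $x$. Since $G$ is assumed connected, there exists a geodesic $y=x_{0}\sim x_{1}\sim\cdots\sim x_{d}=x$ with $d=d(x,y)$, so it suffices to control the increment $\log\bigl(f(x_{i+1},t)/f(x_{i},t)\bigr)$ across each edge of this path at the fixed time $t$, and then sum.

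First, I would fix $t>0$ and apply Lemma \ref{ln} at the vertex $x_{i}$ (for the function $f(\cdot,t)$) and its neighbor $x_{i+1}$, which yields
$$
\log\frac{f(x_{i+1},t)}{f(x_{i},t)}\;\leq\;\sqrt{\frac{H_{\psi}\mu_{{\rm max}}}{w_{{\rm min}}}}\,\sqrt{\Gamma^{\psi}(f)(x_{i},t)}.
$$
Next, since $\psi$ is $C^{1}$ and concave, $\Gamma^{\psi}(f)\geq 0$ everywhere, so the square root on the right-hand side is well defined, and I can insert the hypothesis $\Gamma^{\psi}(f)(x_{i},t)\leq D_{1}/t+D_{2}$ to get a bound that is uniform in $i$:
$$
\log\frac{f(x_{i+1},t)}{f(x_{i},t)}\;\leq\;\sqrt{\frac{H_{\psi}\mu_{{\rm max}}}{w_{{\rm min}}}}\,\sqrt{\frac{D_{1}}{t}+D_{2}}.
$$

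Then I would telescope the sum over $i=0,1,\dots,d-1$. The left-hand side collapses to $\log\bigl(f(x,t)/f(y,t)\bigr)$, while the right-hand side is simply $d(x,y)$ times the uniform per-edge bound, giving
$$
\log\frac{f(x,t)}{f(y,t)}\;\leq\;d(x,y)\sqrt{\frac{H_{\psi}\mu_{{\rm max}}}{w_{{\rm min}}}}\,\sqrt{\frac{D_{1}}{t}+D_{2}}.
$$
Exponentiating yields the claimed inequality. There is really no serious obstacle here, since the hard work is already done by Lemma \ref{ln}; the only minor thing to verify is that the vertices along the chosen geodesic are all admissible inputs to that lemma (they are, by positivity of $f$ and adjacency), and that the per-edge bound does not depend on the vertex, which is exactly the role played by the uniform hypothesis $\Gamma^{\psi}(f)\leq D_{1}/t+D_{2}$.
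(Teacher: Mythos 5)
Your proposal is correct and follows essentially the same route as the paper's own proof: apply Lemma \ref{ln} edge by edge, use the uniform hypothesis $\Gamma^{\psi}(f)\leq D_{1}/t+D_{2}$ to bound each increment, telescope along a path realizing $d(x,y)$, and exponentiate. No substantive differences.
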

\begin{proof}
    We firstly assume that $x\sim y$. By Lemma \ref{ln}, we have 
    \begin{equation*}
    \begin{aligned}
        \log{\frac{f(y,t)}{f(x,t)}}\leq \sqrt{\frac{H_{\psi}\mu_{{\rm max}}}{w_{{\rm min}}}}\sqrt{\Gamma^{\psi}(f)(x)}\leq \sqrt{\frac{H_{\psi}\mu_{{\rm max}}}{w_{{\rm min}}}}\sqrt{\frac{D_{1}}{t}+D_{2}}.
    \end{aligned}        
    \end{equation*}
    When $x$ and $y$ are not adjacent, we let path $P=x_{0}\cdots x_{k}$ where $x_{0}=x, x_{k}=y$. Then 
    \begin{equation*}
    \begin{aligned}
        \log{\frac{f(y,t)}{f(x,t)}}=\sum^{k}_{i=1}\log{\frac{f(x_{i},t)}{f(x_{i-1},t)}}
        &\leq\sum^{k}_{i=1}\sqrt{\frac{H_{\psi}\mu_{{\rm max}}}{w_{{\rm min}}}}\sqrt{\frac{D_{1}}{t}+D_{2}}\\
        &=k\sqrt{\frac{H_{\psi}\mu_{{\rm max}}}{w_{{\rm min}}}}\sqrt{\frac{D_{1}}{t}+D_{2}}.
    \end{aligned}
    \end{equation*}
    Choosing a suitable path $P$ such that $k=d(x,y)$, our desired result follows. 
\end{proof}

%%%%%%%%%%%%%%%%%%%%%%%%%%%%%%%%%%%%%%%%
{\bf Acknowledgements.} Both authors thank to Chuanhuan Li and Kairui Xu for valuable discussion. The first author is supported in part by National Natural Science Foundation of China
(NSFC Grant No. 92066106 and No. 12271093).

%%%%%%%%%%%%%%%%%%%%%%%%%%%%%%%%%%%%%%%%%%%%%%%%%%%%

%%%%%%%%%%%%%%%%%%%%%%%%%%%%%%%%%%%%%%%%%%%%%%%%%%%%

\end{document}